\documentclass[11pt]{amsart}
\usepackage{float}
\usepackage{amsfonts}
\usepackage{amsmath}
\usepackage{amssymb}
\usepackage{graphicx,wrapfig}
\usepackage{adjustbox}
\usepackage[left=2.5cm, right=2.5cm, top=3cm]{geometry}
\usepackage{mathtools}
\usepackage{amsthm}
\usepackage{latexsym}
\usepackage{fancyhdr}
\usepackage{array}
\usepackage{amscd}
\usepackage{lscape}
\usepackage{tikz}
\usepackage{float}
\usepackage{bm}
\usepackage{subfigure}
\usepackage{grffile}
\usepackage{array}
\usepackage{caption}
\usepackage{longtable}
\usepackage{booktabs}
\usepackage{lipsum}
\usepackage[tableposition=below]{caption}
\captionsetup[longtable]{skip=1em}
\newcolumntype{C}[1]{>{\centering\arraybackslash}p{#1}}
\usepackage[toc,page, title, titletoc]{appendix}
\definecolor{navy}{HTML}{2F729C}
\definecolor{red1}{HTML}{FF0000} 
\usepackage[hyperfootnotes=false, colorlinks, linkcolor={blue}, citecolor={magenta}, filecolor={blue}, urlcolor={blue}, plainpages=false, pdfpagelabels]{hyperref}
\raggedbottom

\newcommand{\Q}{{\mathbb Q}}

\newtheorem{theorem}{Theorem}[section]
\newtheorem{lemma}[theorem]{Lemma}
\newtheorem{proposition}[theorem]{Proposition}
\newtheorem{corollary}[theorem]{Corollary}
\theoremstyle{definition}

\newtheorem{example}[theorem]{Example}

\theoremstyle{remark}

\numberwithin{equation}{section}
\begin{document}

\title[Local Data of Rational Elliptic Curves]{Local Data of Rational Elliptic Curves \\with non-Trivial Torsion}

\author{Alexander J. Barrios}
\address{Department of Mathematics and Statistics, Carleton College, Northfield, Minnesota 55057}
\email{abarrios@carleton.edu}
\author{Manami Roy}
\address{Department of Mathematics, Fordham University, Bronx, New York 10458}
\email{manami.roy.90@gmail.com}
\subjclass[2020]{Primary 11G05, 11G07, 11G40, 14H52}
\keywords{Elliptic Curves, Tamagawa numbers, Kodaira-N\'{e}ron types, Tate's algorithm}

\begin{abstract}
By Mazur's Torsion Theorem, there are fourteen possibilities for the non-trivial torsion subgroup $T$ of a rational elliptic curve. For each $T$, such that $E$ may have additive reduction at a prime $p$, we consider a parameterized family $E_T$ of elliptic curves with the property that they parameterize all elliptic curves $E/\mathbb{Q}$ which contain $T$ in their torsion subgroup. Using these parameterized families, we explicitly classify the Kodaira-N\'{e}ron type, the conductor exponent and the local Tamagawa number at each prime $p$ where $E/\mathbb{Q}$ has additive reduction. As a consequence, we find all rational elliptic curves with a $2$-torsion or a $3$-torsion point that have global Tamagawa number~$1$.

\end{abstract}
\maketitle
\tableofcontents

\section{Introduction}
Let $E$ be an elliptic curve over a local field $K$ with ring of integers
$R_{K}$. Then there is a unique minimal proper regular model $\mathcal{C}$ of
$E$ over $R_{K}$ \cite{Neron1964}. Tate's Algorithm \cite{Tate1975}
provides instructions on how to attain $\mathcal{C}$ from the Weierstrass
model of $E$. As a consequence of applying Tate's Algorithm, one attains the
following local data for $E/K$: the Kodaira-N\'{e}ron type of the special fiber of
$\mathcal{C}$, the exponent of the conductor of $E/K$, and the local Tamagawa
number. In this paper, we consider the case when $E/\mathbb{Q}$ is an elliptic curve with a non-trivial torsion subgroup and explicitly
classify the local data of $E/\mathbb{Q}_{p}$ at each prime $p$ for which $E/\mathbb{Q}$ has additive reduction. 

Let $T$ be one of the fourteen possible non-trivial torsion subgroups for a rational elliptic curve with a non-trivial torsion point \cite{Mazur1977}. Then the modular curve parameterizing isomorphism classes of pairs $(E,T)$ where $E$ is an elliptic curve with $T\hookrightarrow E$ has genus $0$; these parameterizations can be found in \cite[Table~3]{Kubert1976}. In this article, we consider a modification of these parameterized families, namely the families of elliptic curves $E_T$ as given in \cite{Barrios2020}. The models for the families $E_T$'s are found in Table~\ref{ta:ETmodel} in Section~\ref{Preliminaries}, and we note that the family $E_{C_2 \times C_8}$ is not listed in the table. We do not consider the family $E_{C_2 \times C_8}$ in this article since each member of the family $E_{C_2 \times C_8}$ is semistable by \cite[Theorem~7.1]{Barrios2020}.

The families $E_T$ have the property that if $E/\Q$ has non-trivial torsion subgroup $E(\Q)_{\text{tors}}$, then $E$ is $\Q$-isomorphic to a member of each family $E_T$ such that $ T \hookrightarrow E(\Q)_{\text{tors}}$, except for $T=C_2,C_3,C_3^0$. To be precise, if $C_2\times C_2 \hookrightarrow E(\Q)_{\text{tors}}$, then $E$ is not a member of $E_{C_2}$. If $C_3 \hookrightarrow E(\Q)_{\text{tors}}$ and $E$ has nonzero $j$-invariant, then $E$ is $\Q$-isomorphic to a member of $E_{C_3}$. If instead, $C_3 \hookrightarrow E(\Q)_{\text{tors}}$ and $E$ has $j$-invariant equal to $0$, then $E$ is $\Q$-isomorphic to either $E_{C_3}\!\left(  24,1\right)  $ or to the curve $E_{C_{3}^{0}}\!\left(  a\right)$ for some positive cubefree integer $a$. 
In particular, the study of rational elliptic curves with a non-trivial torsion point is equivalent to understanding the families $E_T$ for $T=C_2,C_2\times C_2, C_3, C_3^0, C_5$, and $C_7$. We also consider $E_T$ for the remaining $T$'s as they yield finer results on the local data of rational elliptic curves with a non-trivial torsion point. 
For instance, a consequence of our work is that if a member of $E_{C_2\times C_4}$ has additive reduction at $2$, then it must have Kodaira-N\'eron type $\mathrm{I}_n^*$ for some positive integer $n$. But there are more possible Kodaira-N\'eron types at $2$ for members of the families $E_{C_4}$ and $E_{C_2\times C_2}$. For example, the elliptic curve $E:y^2=x^3+x^2-20x$  (LMFDB label  \href{https://www.lmfdb.org/EllipticCurve/Q/120/b/4}{120.b4} \cite{lmfdb}) has additive reduction at $2$ with torsion subgroup $C_2\times C_4$. We observe that $E$ is $\Q$-isomorphic to $E_{C_2\times C_4}(2,1),$  $E_{C_4}(2,3)$, and $E_{C_2\times C_2}(4,9,1)$. 
Since it occurs in the family $E_{C_2\times C_4}$, then it must have Kodaira-N\'eron type $\mathrm{I}_m^*$ for some positive integer $m$, rather than the other possible types that occur in the families $E_{C_4}$ and  $E_{C_2\times C_2}$. In fact, our work shows that $m=1$.

In summary, the parameterized families $E_{T}$ have the property that they parameterize all elliptic curves $E/\Q$ with a non-trivial torsion point (see Proposition~\ref{rationalmodels}). 
We note that in the case when $\left\vert T\right\vert>4$, the parameterized family
$E_{T}$ is the universal elliptic curve over the genus $0$ modular curve
$X_{1}\!\left(  N\right)  $ or $X_{1}\!\left(  2,N\right)  $.

 In
\cite{Barrios2020}, the first author found necessary and sufficient
conditions on the parameters of $E_{T}$ that determine the primes $p$ at which
$E_{T}$ has additive reduction. 
In this article, we classify the local data of $E_{T}$ at those primes $p$ via Tate's Algorithm \cite{Tate1975}. Specifically,
   \textit{for every parametrized family $E_T$, we calculate the Kodaira-N\'{e}ron type at $p$, conductor exponent $f_p$, and local Tamagawa number $c_p$ at every
prime $p$ for which $E_T$ has additive reduction}. 

 These results are a consequence of Theorems~\ref{ThmC2at2}, \ref{ThmforC2podd}, \ref{ThmforC30}, \ref{ThmforC3}, \ref{ThmforC4}, \ref{ThmC2xC2}, and \ref{ThmotherTs}.
The explicit conditions can be found in Tables \ref{TableforC2}, \ref{TableforC2podd}, \ref{Table for C30}, \ref{TableforC3}, \ref{TableforC4}, \ref{Table for C2xC2}, and \ref{OtherTs}.
We label the Kodaira-N\'{e}ron type as K-N type in these tables. Consequently, we attain our explicit classification of the local data of a rational elliptic curve with a non-trivial torsion point at primes for which it has additive reduction.

In Section~\ref{Preliminaries}, we review the necessary background needed for
the proof of our results. First, we review some details about Tate's
Algorithm and state a result which simplifies the use of Tate's Algorithm in the
setting of a parameterized family. We also discuss Papadopoulos's
\cite{Papadopoulos1993} reduction of Tate's Algorithm as this will be helpful in our work. We conclude Section~\ref{Preliminaries} by giving
an explicit description of the parameterized families $E_{T}$ and reviewing
the relevant results that will be used in this paper. Section~\ref{PfMainThm} is devoted to the proofs of our main results.

In Section~\ref{Section on Consequences of the local data} we discuss some interesting examples and consequences of our results. For instance, we attain the following result: if $E$ is a rational elliptic curve with a torsion point of order $2$ (resp. $3$), then $f_p \leq 2$ for each prime $p \neq 2$ (resp. $p \neq 3$). This and several other consequences of our
results are provided in Section~\ref{Section on Consequences of the local data} (see Corollaries~\ref{CorC2xC2Instat},~\ref{CorollaryonConductor},~\ref{CorIIstar}~and~\ref{NeronC2}).

In addition, our results allow us to construct some interesting elliptic curves with prescribed local data. The following example illustrates a consequence of Corollary~\ref{NeronC2} where the elliptic curve has Kodaira-N\'{e}ron type $\mathrm{I}_{n}^{\ast}$ at each prime $p$ where it has additive reduction.
	\begin{example}\label{introexample}
	The rational elliptic curve 
	\[
	E: y^{2}=x^{3}+13440x^{2}-54296487559248001716600x
	\]
	has Kodaira-N\'{e}ron type $\mathrm{I}_{n}^{\ast}$ at $p$
	where $\left(  p,n\right)  $ is one of the following pairs: $\left(
	2,12\right)  ,\ \left(  3,7\right)  ,\ \left(  5,11\right)  ,$ or $\left(
	7,8\right)  $. Moreover, these are the only primes for which $E$ has additive reduction.
\end{example}
We note that the conductor of rational elliptic curves with a non-trivial torsion point grows rapidly. In particular, if one is interested in finding elliptic curves with prescribed local data, then such elliptic curves may not be available in the existing databases. For instance, the elliptic curve in Example~\ref{introexample} has a $25$-digit conductor.

In Section~\ref{Global Tamagwa}, we look at the global Tamagawa number $c$ of elliptic curves with a non-trivial torsion point. This section is motivated by the work of Lorenzini \cite{Lorenzini2012}, where he looked at the quotient $\frac{c}{|E(\mathbb{Q})_{\mathrm{tors}}|}$. This ratio appears as a factor in the leading term of the $L$-function
of $E/\mathbb{Q}$ in the Birch and Swinnerton-Dyer Conjecture \cite{BSD} (also see \cite[F.4.1.6]{HindrySilverman2000}). Lorenzini \cite[Proposition 1.1]{Lorenzini2012} showed that with finitely many exceptions, this
ratio is an integer for elliptic curves with a torsion point of order $N\geq5$. When $N=4$, he showed that the ratio $\frac{c}{|E(\mathbb{Q})_{\mathrm{tors}}|}=\frac{1}{2}$ occurs infinitely often if Schinzel's Hypothesis H is true. 
In addition, he showed that there are exactly four non-isomorphic elliptic curves (LMFDB labels \href{https://www.lmfdb.org/EllipticCurve/Q/11a3/}{11.a3}, \href{https://www.lmfdb.org/EllipticCurve/Q/15a7/}{ 15.a4}, \href{https://www.lmfdb.org/EllipticCurve/Q/15a8/}{ 15.a7}, \href{https://www.lmfdb.org/EllipticCurve/Q/17a4/}{17.a4})
with a torsion point of order $N\geq 4$ such that $c=1$. 
He also provided an infinite family of non-isomorphic elliptic curves with a $3$-torsion point and $c=1$. Thus, the ratio $\frac{c}{|E(\mathbb{Q})_{\mathrm{tors}}|}=\frac{1}{3}$ occurs infinitely often. As an application of our results in Section~\ref{Sect:3tors}, we find all rational elliptic curves with a $3$-torsion point and $c=1$. This is Theorem~\ref{GlobalTamaat3}, which is given below:
\begin{theorem}
\label{GlobalTamaat3}
Let $E$ be a rational elliptic curve with a $3$-torsion point. Then the global
Tamagawa number of $E$ is $c=1$ if and only if $E$ is $\mathbb{Q}$-isomorphic to either $y^{2}+y=x^{3}$ or $y^2+a^3xy\pm a^6y=x^3$ for some positive integer $a$ such that the following are satisfied for each prime $p$:%
\begin{equation}\label{C3tamacond}
v_{p}\!\left(  \pm a^{3}\mp27\right)  =\left\{
\begin{array}
[c]{cl}%
0, 1, \text{or } 4,& \\
3 & \text{with }p=3\text{ and }1\mp a\not \equiv -2\ \operatorname{mod}9,\\
5 & \text{with }p=3\text{ and }a^{6}\mp27a\not \equiv 3^{8}\ \operatorname{mod}3^{9},\\
6 & \text{with }p=3\text{ and }a^{6}\mp27a\equiv3^{9}\ \operatorname{mod}3^{10},\\
m & \text{where }m \text{ is odd with }  p\equiv5,11\ \operatorname{mod}12.
\end{array}
\right. 
\end{equation}
\end{theorem}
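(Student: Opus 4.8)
The plan is to reduce the global statement to a product of local conditions. Since the global Tamagawa number $c=\prod_p c_p$, the condition $c=1$ is equivalent to $c_p=1$ at every prime $p$. At primes of good or multiplicative reduction the local Tamagawa number contributes trivially or is controlled by the reduction type, so the substantive content lies at primes of additive reduction. The strategy is therefore to apply the local classification already established for the family $E_{C_3}$ (and the special case $E_{C_3^0}$ handling the $j=0$ curves), namely Theorem~\ref{ThmforC3} and Theorem~\ref{ThmforC30} together with their accompanying Tables~\ref{TableforC3} and~\ref{Table for C30}, and read off precisely when each local Tamagawa number equals $1$.

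First I would invoke Proposition~\ref{rationalmodels} to write an arbitrary rational elliptic curve $E$ with a $3$-torsion point in a normalized model: either $E$ has $j\neq0$, in which case it is $\Q$-isomorphic to a member $E_{C_3}(a,b)$ of the family, or it has $j=0$, in which case it is either the curve $E_{C_3}(24,1)$ or one of the $E_{C_3^0}(a)$ curves. The curve $y^2+y=x^3$ is the $j=0$ curve with everywhere good or (at worst) additive reduction only at $3$, and one checks directly that it has $c=1$; this accounts for the first alternative in the theorem. For the main family, I would parameterize by the relevant discriminant-type quantity, which for $E_{C_3}$ is governed by $\pm a^3\mp27$ (the shape appearing in the theorem), and express the curve in the form $y^2+a^3xy\pm a^6y=x^3$ after clearing to the $E_T$ model.

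Next I would run through the additive primes dichotomy $p=3$ versus $p\neq3$. For $p\neq3$, the local classification forces the reduction to be of a restricted type, and by the stated consequence that $f_p\le2$ for a curve with a $3$-torsion point at primes $p\neq3$, the possible Kodaira--N\'eron types are limited; I would extract from Table~\ref{TableforC3} exactly which valuations $v_p(\pm a^3\mp27)$ yield $c_p=1$. This is where the arithmetic progressions $p\equiv5,11\bmod12$ and the parity condition ``$m$ odd'' enter: these congruences control whether the relevant reduction type (and hence $c_p$) collapses to a trivial component group, so I would match each row of the table against the condition $c_p=1$ and record the allowed valuations $0,1,4$, together with the odd-$m$ case under the stated congruence on $p$. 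For $p=3$, the finer congruence conditions modulo $9,3^9,3^{10}$ distinguish among the several additive types that can occur at the prime of the torsion order; here I would again consult Table~\ref{TableforC3} (and Table~\ref{Table for C30} for the $j=0$ branch) to pin down exactly the valuations $3,5,6$ together with their accompanying congruence refinements that yield $c_3=1$.

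The hard part will be the bookkeeping at $p=3$: because $3$ is the order of the torsion point, Tate's algorithm produces several nearby Kodaira--N\'eron types whose local Tamagawa numbers depend on subtle higher-order congruences of the parameters, and translating the table's case distinctions into the clean congruence conditions $1\mp a\not\equiv-2\bmod9$, $a^6\mp27a\not\equiv3^8\bmod3^9$, and $a^6\mp27a\equiv3^9\bmod3^{10}$ requires carefully tracking how the parameter $a$ feeds into the relevant coefficients after minimalization. The remaining step is to confirm completeness: I would argue that every valuation not listed forces some $c_p>1$, so that the enumerated conditions are not merely sufficient but necessary, thereby establishing the ``if and only if.'' Assembling these local verdicts via $c=\prod_p c_p=1$ then yields the theorem.
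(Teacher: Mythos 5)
Your overall skeleton (reduce $c=1$ to $c_p=1$ for all $p$, normalize via Proposition~\ref{rationalmodels}, and quote the local data of $E_{C_3}$ and $E_{C_3^0}$) matches the paper, but there are two genuine gaps. First, you never derive \emph{why} $c=1$ forces the curve into the form $y^2+a^3xy\pm a^6y=x^3$; you simply assert that the curve can be so expressed. In the paper this is a substantive step: writing $E\cong E_{C_3}(k,b)$, one shows that $v_p(k)\equiv0\bmod 3$ for every prime $p$ (otherwise Theorem~\ref{ThmforC3} produces an additive prime $p\neq3$ with $c_p=3$), so $k=a^3$ is a cube; and that $b=\pm1$, which requires a separate Tate's-algorithm computation showing that any prime $p\mid b$ is a prime of \emph{split} multiplicative reduction, whence $c_p=v_p(\Delta)=3v_p(b)\geq3$. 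Neither of these necessity arguments appears in your outline, yet without them the ``only if'' direction collapses.

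Second, your treatment of $p\neq3$ rests on a misconception. Once $k=a^3$ and $b=\pm1$, the curve is semistable at every $p\neq3$, so Table~\ref{TableforC3} --- which classifies only the \emph{additive} primes --- says nothing about those $p$. The condition ``$m$ odd with $p\equiv5,11\bmod{12}$'' does not come from matching rows of that table against $c_p=1$, as you propose; it comes from analyzing \emph{multiplicative} reduction: there $c_p=1$ iff the reduction is non-split and $v_p(\Delta)$ is odd, and non-splitness is decided by a fresh application of Tate's algorithm reducing to whether $t^2-3at+3a^2$ splits modulo $p$, i.e.\ to $\left(\frac{-3}{p}\right)=-1$, i.e.\ to $p\equiv5,11\bmod{12}$. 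Your opening claim that at multiplicative primes the Tamagawa number ``contributes trivially'' and that ``the substantive content lies at primes of additive reduction'' is therefore false for this theorem; a significant portion of the proof lives exactly at the multiplicative primes. The $p=3$ bookkeeping you describe is essentially correct and does follow from Table~\ref{TableforC3}, as does the disposal of the $j=0$ branch via $E_{C_3^0}$.
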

As a consequence of Theorem~\ref{GlobalTamaat3}, we provide a new infinite family of elliptic curves with a $3$-torsion point and global Tamagawa number $1$ in Corollary~\ref{CorC3c1}. We also find all rational elliptic curves with a $2$-torsion point and $c=1$ in Theorem~\ref{GlobalTamaat2}. As a consequence of this result, we show that there are infinitely many integers $k$ such that the rational elliptic curve $$y^{2}=x^{3}+\left(4k+2\right)x^{2}-x$$ has a
$2$-torsion point and $c=1$. Theorems~\ref{GlobalTamaat3} and \ref{GlobalTamaat2}, coupled with Lorenzini's results, give a classification of all rational elliptic curves with a non-trivial torsion point which have global Tamagawa number $c=1$.

\section{Preliminaries\label{Preliminaries}}
In this section, we recall some basic facts about elliptic curves; for details see \cite{Silverman2009} and \cite{Silverman1994}. Let $F$ be a field and let $E/F$ be an elliptic curve given by the (affine) \textit{Weierstrass model}
{
\begin{equation}
E:y^{2}+a_{1}xy+a_{3}y=x^{3}+a_{2}x^{2}+a_{4}x+a_{6}.\label{ch:inintroweier}
\end{equation}
From the Weierstrass coefficients $a_{j}$, we define the following quantities%
\begin{equation}
\begin{split}
\label{basicformulas}
b_2&=a_1^2+4a_2,\quad b_4=2a_4+a_1a_3,\quad
b_6=a_3^2+4a_6,\\ b_8&=a_1^2a_6+4a_2a_6-a_1a_3a_4+a_2a_3^2-a_4^2,\\
c_4&=b_2^2-24b_4,\quad c_6=36b_2b_4-216b_6-b_2^3,\quad
\Delta=9b_2b_4b_6-b_2^2b_8-8b_4^3-27b_6^2.
\end{split}
\end{equation}}

Here $\Delta$ is the \textit{discriminant} of $E$ and $c_{4}$, $c_{6}$
are the \textit{invariants associated to the Weierstrass model} of $E$. The \textit{$j$-invariant} of $E$ is given by $j=\frac{c_{4}^{3}}{\Delta}$. An
elliptic curve $E^{\prime}$ is $F$\textit{-isomorphic} to $E$ if $E^{\prime}$ arises from $E$ via an
\textit{admissible change of variables} $x\longmapsto u^{2}x+r$ and
$y\longmapsto u^{3}y+u^{2}sx+w$ for $u,r,s,w\in F$ and $u\neq0$.  

Each elliptic curve $E/\mathbb{Q}_{p}$ is $\mathbb{Q}_{p}$-isomorphic to an elliptic curve given by a \textit{minimal model} of the
form \eqref{ch:inintroweier} such that $a_{j}\in\mathbb{Z}_{p}$ and 
$v_{p}\!\left(  \Delta\right)\ge0 $ is minimal. Here $v_{p}$ is the $p$-adic valuation of $\mathbb{Q}_{p}$. We call $\Delta$ the \textit{minimal discriminant} of $E/\mathbb{Q}_{p}$. Let $c_{4}$ and $\Delta$ be the invariants associated to a minimal model of $E/\mathbb{Q}_{p}$. We say that $E/\mathbb{Q}_{p}$ has \textit{additive
reduction} if $v_{p}\!\left(  \Delta\right) >0$ and $v_{p}\!\left(
c_{4}\right)>0$, and $E/\mathbb{Q}_{p}$ is said to
have \textit{multiplicative reduction} if $v_{p}\!\left(  \Delta\right)>0$ and $v_{p}\!\left(  c_{4}\right)=0$. If $v_{p}\!\left(  \Delta\right)=0$, then $E/\mathbb{Q}_{p}$ is said to have \textit{good reduction}.

Similarly, $E/\mathbb{Q}$ is $\mathbb{Q}$-isomorphic to a model with the property that it is a minimal model over $\mathbb{Q}_{p}$ at each prime $p$. We call this model a \textit{global minimal model} for $E/\mathbb{Q}$ and its discriminant is called the \textit{minimal discriminant}. We say $E/\mathbb{Q}$ has additive, multiplicative, or good reduction at a prime $p$ if $E/\mathbb{Q}_{p}$ has the same reduction, respectively. We say $E/\mathbb{Q}$ is \textit{semistable at a prime} $p$ if it has good or multiplicative reduction at $p$. Moreover, $E$ is said to be \textit{semistable} if $E$ is semistable at all primes. The
\textit{conductor} $N_{E}$ of a rational elliptic curve $E$
is given by%
\[
N_{E}=\prod_{p|\Delta}p^{f_{p}}
\]
where $\Delta$ is the minimal discriminant and $f_{p}$ is a positive integer. As usual, we denote by $f_{p}$ \textit{the conductor exponent at} $p$. We note that \[
f_{p}=\left\{
\begin{array}
[c]{cl}%
0 & \text{if }E\text{ has good reduction at }p\text{,}\\
1 & \text{if }E\text{ has multiplicative reduction at }p\text{,}\\
2+\delta_{p} & \text{if }E\text{ has additive reduction at }p\text{,}%
\end{array}
\right.
\]
where $\delta_{p}=0$ if $p\geq5$, $\delta_{p}\leq 3$ if $p=3$, and $\delta_{p}\leq 6$ if $p=2$.
\vspace{0.5em}

\noindent \textbf{Local data of rational elliptic curves.} Given a rational elliptic curve $E$, Tate's Algorithm \cite{Tate1975} (see also \cite[Chapter IV]{Silverman1994}) computes the minimal proper regular
model $\mathcal{C}_{p}$ of $E$ over $\mathbb{Z}_{p}$ at a prime $p$. In particular, we attain the following local data for $E$ at a given prime~$p$:

\begin{enumerate}
\item The special fiber of $\mathcal{C}_{p}$, i.e., the Kodaira-N\'{e}ron type at $p$ of $E$. We use Kodaira symbols to describe the Kodaira-N\'{e}ron type at $p$.

\item The exponent $f_{p}$ appearing at the prime $p$ of the conductor of $E$.

\item The local Tamagawa number $c_{p}$, which is the number of components of
the special fiber of $\mathcal{C}_{p}$ defined over $\mathbb{F}_{p}$ with multiplicity $1$.
\end{enumerate}

Tate's Algorithm reduces the question of computing the special fiber of $\mathcal{C}_{p}$ to considering polynomial equations over $\mathbb{F}_{p}$. This, in turn, allows us to compute the Kodaira-N\'{e}ron type and the local Tamagawa number of $E/\mathbb{Q}_p$ by considering the splitting of certain polynomials modulo $p$. The possible Kodaira-N\'{e}ron types of $E/\Q_p$ are described in \cite[Theorem 8.2]{Silverman1994}. 
Specifically, if $E$ is semistable at $p$, then the Kodaira-N\'{e}ron type at $p$ is $\mathrm{I}_{n}$, where $n$ is a nonnegative integer. If $E$ has additive reduction at $p$, then the possible Kodaira-N\'{e}ron types at $p$ are $\mathrm{I}_{n}^{\ast}, \mathrm{II}, \mathrm{III}, \mathrm{IV}, \mathrm{II}^{\ast},\mathrm{III}^{\ast}$, and $\mathrm{IV}^{\ast}$, where $n$ is a nonnegative integer. The following lemma, which is
a direct consequence of Tate's Algorithm and is inspired by \cite{DokchitserDokchitser2013}, determines when an elliptic curve $E$ has Kodaira-N\'{e}ron type $\mathrm{I}_{n}^{\ast}$ at a given prime $p$ for $n\ge1$.

\begin{lemma}
\label{LemmaIn}Suppose $n$ is a positive integer and let $E/
\mathbb{Q}_{p}$ be an elliptic curve given by the Weierstrass model
\[
E:y^{2}+a_{1}xy+a_{3}y=x^{3}+a_{2}x^{2}+a_{4}x+a_{6}%
\]
with each $a_{j}\in
\mathbb{Z}_{p}$ such that
\begin{equation}
v_{p}\!\left(  a_{1}\right)  \geq1,\quad v_{p}\!\left(  a_{2}\right)  =1,\quad
v_{p}\!\left(  a_{3}\right)  \geq\frac{n+3}{2},\quad v_{p}\!\left(
a_{4}\right)  \geq\frac{n+4}{2},\quad v_{p}\!\left(  a_{6}\right)  \geq
n+3.\label{LemmaInval}%
\end{equation}
Let $\Delta$ denote the minimal discriminant of $E$ and set $a_{i,j}%
=\frac{a_{i}}{p^{j}}$. If one of the following equations
\begin{equation}%
\begin{cases}
t^{2}+a_{3,\frac{n+3}{2}}t-a_{6,n+3}\equiv0\ \operatorname{mod}p\qquad
\qquad\ \ \ \ \  & \text{if }n\text{ is odd},\\
a_{2,1}t^{2}+a_{4,\frac{n+4}{2}}t+a_{6,n+3}\equiv0\ \operatorname{mod}p\qquad
& \text{if }n\text{ is even,}
\end{cases}
\label{LemmaInpoly}
\end{equation}
has distinct roots in $\overline{\mathbb{F}}_{p}$, then $E$ has Kodaira-N\'{e}ron type
$\mathrm{I}_{n}^{\ast}$ at $p$ and $f_{p}=v_{p}\!\left(
\Delta\right)  -4-n$. In particular, $f_{p}=2$ for $p$ odd. Moreover, $c_{p}=4$ if the distinct roots are in $\mathbb{F}%
_{p}$ and $c_{p}=2$ otherwise.
\end{lemma}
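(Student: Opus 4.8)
The plan is to apply Tate's Algorithm directly, using the valuation hypotheses \eqref{LemmaInval} to track the algorithm through its successive steps until it terminates in the $\mathrm{I}_n^*$ branch. First I would verify that under \eqref{LemmaInval} the curve has additive reduction and that the algorithm proceeds past the early steps: since $v_p(a_2)=1$ forces $p \mid b_2 = a_1^2+4a_2$ (as $v_p(a_1)\geq 1$), the reduction is not of type $\mathrm{I}_0$ or $\mathrm{I}_n$, and one checks the singular point can be translated to the origin, so Tate's steps 1--3 are cleared. The hypotheses are precisely calibrated so that after centering we land in the type $\mathrm{I}_m^*$ regime of the algorithm, where one runs the sub-procedure that repeatedly tests whether an auxiliary quadratic splits.

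The core of the argument is the loop in Tate's Algorithm that distinguishes $\mathrm{I}_n^*$ from the further-degenerate types $\mathrm{II}^*,\mathrm{III}^*,\mathrm{IV}^*$. At each pass through this loop one extracts a quadratic polynomial over $\mathbb{F}_p$ whose coefficients are the reductions of rescaled Weierstrass coefficients; the algorithm halts declaring type $\mathrm{I}_n^*$ exactly when that quadratic has distinct roots, and otherwise continues. I would show that the valuation bounds in \eqref{LemmaInval}, after $\lfloor (n+1)/2 \rfloor$ translation-and-rescaling steps, reduce the relevant quadratic to \eqref{LemmaInpoly}, with the two cases (odd versus even $n$) arising from the parity-dependent shape of the quadratic in the algorithm: for odd $n$ the quadratic is monic of the form $t^2 + a_{3,(n+3)/2}\,t - a_{6,n+3}$, while for even $n$ the leading coefficient is the reduction of $a_2/p$, giving $a_{2,1}t^2 + a_{4,(n+4)/2}\,t + a_{6,n+3}$. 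Thus the distinct-roots hypothesis is exactly the condition that the loop terminates at stage $n$, yielding type $\mathrm{I}_n^*$.

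Once the type is established, the conductor exponent formula follows from the general relation $v_p(\Delta) = f_p + (\text{number of components of the special fiber}) - 1$; for type $\mathrm{I}_n^*$ the special fiber has $n+5$ components, so $v_p(\Delta) = f_p + n + 4$, giving $f_p = v_p(\Delta) - 4 - n$. That $f_p = 2$ for $p$ odd is then immediate, since additive reduction forces $\delta_p = 0$ away from $p=2,3$, and for $p=3$ one notes the type $\mathrm{I}_n^*$ with $n\geq 1$ is tame. Finally, the Tamagawa number is read off from the component group: for type $\mathrm{I}_n^*$ the component group is $C_2\times C_2$ when $n$ is even (or more precisely when the quadratic splits over $\mathbb{F}_p$, giving all four components Galois-stable, hence $c_p=4$) and $C_4$ otherwise, and Tate's Algorithm records that $c_p = 4$ precisely when the distinct roots lie in $\mathbb{F}_p$ and $c_p = 2$ when they are Galois-conjugate in $\mathbb{F}_{p^2}\setminus\mathbb{F}_p$, since then the Frobenius permutes the two non-identity components.

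I expect the main obstacle to be the careful bookkeeping of valuations through the iterated change of variables: each pass of the loop shifts the coefficients by a power of $p$ and translates $x,y$, and one must confirm that the inequalities in \eqref{LemmaInval} are preserved in the correct shifted form at every stage so that the algorithm does not prematurely exit into a different Kodaira type. Rather than iterate step by step, I would set up an induction on $n$ (or equivalently on the number of loop passes), showing that the hypotheses for parameter $n$ reduce, after one translation-rescaling, either to the terminating quadratic test \eqref{LemmaInpoly} or to the hypotheses for a curve with the next parity; the parity alternation between the two cases of \eqref{LemmaInpoly} is the delicate point to get exactly right.
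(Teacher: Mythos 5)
Your proposal is correct and follows essentially the same route as the paper: run Tate's Algorithm into the Step 7 subprocedure, show the valuation hypotheses force the stage-$l$ auxiliary quadratic to have a double root (at $t=0$) for all $l<n$ so the loop continues, and identify the stage-$n$ quadratic with \eqref{LemmaInpoly}, after which $f_p$ and $c_p$ are read off as you describe. The only simplification the paper makes relative to your outline is observing that no intermediate translations are needed, since under \eqref{LemmaInval} every intermediate quadratic is congruent to $t^2$ or $a_{2,1}t^2$ modulo $p$, so the bookkeeping you anticipate as the main obstacle largely disappears.
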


\begin{proof}
By Tate's Algorithm \cite[Chapter IV]{Silverman1994}, we see that the assumptions
on the valuations of the Weierstrass coefficients (\ref{LemmaInval}) imply
that the Weierstrass model satisfies the first six steps of Tate's Algorithm.
Since $P\!\left(  t\right)  =t^{3}+a_{2,1}t^{2}+a_{4,2}t+a_{6,3}\equiv
t^{2}\left(  t+1\right)  \ \operatorname{mod}p$, Tate's Algorithm
proceeds to the subprocedure of Step $7$ which implies that the Kodaira-N\'{e}ron type
is I$_{m}^{\ast}$ for some $m\geq1$. Next, we show that $m=n$. Indeed, if $n=1$,
then Tate's Algorithm shows that $E$ has Kodaira-N\'{e}ron type $\mathrm{I}_{1}^{\ast}$ since
$t^{2}+a_{3,4}t-a_{6,4}\equiv0\ \operatorname{mod}p$ has distinct roots in
$\overline{\mathbb{F}}_{p}$. Now suppose $n>1$ and $l<n$. Then%
\[%
\begin{cases}
t^{2}+a_{3,\frac{l+3}{2}}t-a_{6,l+3}\equiv t^{2}\ \operatorname{mod}%
p\qquad\qquad\ \ \ \ \  & \text{if }l\text{ is odd},\\
a_{2,1}t^{2}+a_{4,\frac{l+4}{2}}t+a_{6,l+3}\equiv a_{2,1}t^{2}%
\ \operatorname{mod}p\qquad & \text{if }l\text{ is even.}%
\end{cases}
\]
Thus, the polynomial has a double root at $t=0$ and the subprocedure of Tate's
Algorithm continues. From this we conclude that $E$ has Kodaira-N\'{e}ron
type $\mathrm{I}_{n}^{\ast}$ with the claimed $c_{p}$ and~$f_{p}$.
\end{proof}

Papadopoulos \cite{Papadopoulos1993} showed how the Kodaira-N\'{e}ron type and conductor exponent~$f_{p}$ of $E/\mathbb{Q}_p$ can be determined from the triplet
$\left(  v_{p}\!\left(  c_{4}\right)  ,v_{p}\!\left(  c_{6}\right)
,v_{p}\!\left(  \Delta\right)  \right)  $. We list this result in Tables~\ref{ta:PapTableIV} and \ref{ta:PapTableIandII} in Appendix~\ref{AppendixTables}. Tables~\ref{ta:PapTableIV}~and~\ref{ta:PapTableIandII} are reproduced from \cite[Tables I, II, and IV]{Papadopoulos1993} and the possible local Tamagawa numbers are obtained by Tate's Algorithm. Observe that when the Kodaira-N\'{e}ron type is II, $\mathrm{III}$, $\mathrm{II}^{\ast}$, or $\mathrm{III}^{\ast}$, the local
Tamagawa number is uniquely determined. For the Kodaira-N\'{e}ron types $\mathrm{IV}$, IV$^{\ast
}$, or $\mathrm{I}_{n}^{\ast}$, Tate's Algorithm has to be used to compute the local Tamagawa number. The following lemma summarizes those results.
\begin{lemma}\label{LemmaPap}
Let $E/\mathbb{Q}_{p}$ be an elliptic curve given by a minimal model of the form \eqref{ch:inintroweier} with additive reduction. Then the triplet $\left(  v_{p}\!\left(  c_{4}\right)
,v_{p}\!\left(  c_{6}\right)  ,v_{p}\!\left(  \Delta\right)  \right)$ determines the possible Kodaira-N\'{e}ron types, conductor
exponents $f_{p}$, and local Tamagawa numbers $c_{p}$ as listed in Tables~\ref{ta:PapTableIV} and \ref{ta:PapTableIandII}. 

Moreover, the triplet does
not necessarily determine the Kodaira-N\'{e}ron type and conductor exponent of $E/\mathbb{Q}_{p}$ for $p=2,3$. For $p=3$, uniqueness is attained by the additional conditions given in Table~\ref{ta:PapTableIandII}. For $p=2$, uniqueness is attained by running Tate's Algorithm to at most Step $n$, where $n$ is as given in Table~\ref{ta:PapTableIV}.
\end{lemma}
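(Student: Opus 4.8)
The plan is to treat the lemma as a compilation result and to dispatch its two assertions separately, since the first is essentially a restatement of Papadopoulos's work augmented with the Tamagawa data, while the second records where that work is genuinely incomplete at the small primes. For the first assertion, I would appeal directly to \cite{Papadopoulos1993}: the reduction of Tate's Algorithm carried out there shows that for $p\geq 5$ the triplet $\left(v_p(c_4),v_p(c_6),v_p(\Delta)\right)$ determines the Kodaira-N\'eron type and the conductor exponent $f_p$, and these determinations are exactly the entries reproduced in Tables~\ref{ta:PapTableIV} and~\ref{ta:PapTableIandII}. Thus the only genuinely new content in the first claim is the assertion about the local Tamagawa numbers $c_p$, and establishing that is where the argument should focus.

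For the Tamagawa numbers I would run through the additive types one at a time and record what Tate's Algorithm forces. For types $\mathrm{II}$, $\mathrm{III}$, $\mathrm{II}^{\ast}$, and $\mathrm{III}^{\ast}$ the algorithm terminates with a special fiber whose component structure is rigid, yielding $c_p=1,2,1,2$ respectively with no dependence on further data; these values can therefore simply be appended to the corresponding rows of the tables. For $\mathrm{IV}$ and $\mathrm{IV}^{\ast}$ the algorithm produces $c_p=3$ or $c_p=1$ according to whether an associated quadratic splits over $\mathbb{F}_p$, and for $\mathrm{I}_n^{\ast}$ the number of components defined over $\mathbb{F}_p$ is $4$ or $2$ according to whether the roots of the polynomial in \eqref{LemmaInpoly} lie in $\mathbb{F}_p$, as already established in Lemma~\ref{LemmaIn}. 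In each of these three cases the relevant splitting behavior is not a function of the triplet alone, which is precisely why both possible values of $c_p$ appear in the tables; verifying this is what justifies the word \emph{possible} in the statement.

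For the second assertion I would note that Papadopoulos himself isolates the triplets at $p=2,3$ for which the type and $f_p$ are not pinned down. At $p=3$ these ambiguous triplets are finite in number, and each is separated by a congruence on the Weierstrass coefficients; I would list these as the auxiliary conditions in Table~\ref{ta:PapTableIandII} and check each by applying the relevant step of Tate's Algorithm directly. At $p=2$ no finite set of congruences suffices, but the ambiguity is removed by continuing the algorithm: because each pass through the Step~$7$ subprocedure strictly decreases a valuation, the algorithm terminates, and the table records the step $n$ at which the type and $f_p$ become determined. The main obstacle is exactly this small-prime bookkeeping---confirming that the listed congruences at $p=3$ really do separate the coincident types (for instance $\mathrm{II}$ versus $\mathrm{IV}$, or $\mathrm{III}$ versus $\mathrm{I}_0^{\ast}$) and that the recorded step $n$ at $p=2$ always suffices. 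This is a case-by-case trace through the branching of Tate's Algorithm, and my expectation is to cite \cite{Papadopoulos1993} for the type-and-$f_p$ portion and to supply only the short supplementary computations needed to attach the Tamagawa numbers and to confirm the $p=2,3$ resolution.
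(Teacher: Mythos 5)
Your proposal matches the paper's justification essentially verbatim: the paper gives no formal proof of this lemma but supports it exactly as you do, by reproducing Papadopoulos's Tables I, II, and IV for the type-and-$f_p$ data, observing that $c_p$ is forced to be $1,2,1,2$ for types $\mathrm{II},\mathrm{III},\mathrm{II}^{\ast},\mathrm{III}^{\ast}$, and deferring the remaining Tamagawa numbers and the $p=2,3$ ambiguities to Tate's Algorithm. The only small imprecision is your claim that $\mathrm{I}_n^{\ast}$ always yields $c_p\in\{2,4\}$ via Lemma~\ref{LemmaIn}: that lemma covers only $n\geq1$, while for $\mathrm{I}_0^{\ast}$ the count is governed by the number of roots of the cubic $P(t)$ in $\mathbb{F}_p$ and can also be $1$, as Table~\ref{ta:PapTableIandII} records.
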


\vspace{0.5em}

\noindent\textbf{Parameterization of rational elliptic curves with non-trivial
torsion.} Let $E$ be a rational elliptic curve. The following theorem lists
all the possible torsion subgroups of $E$.

\begin{theorem}
[Mazur's Torsion Theorem \cite{Mazur1977}]\label{MazurTorThm}Let $E$ be a
rational elliptic curve and let $C_{N}$ denote the cyclic group of order $N$.
Then%
\[
E\!\left(  \mathbb{Q}\right)  _{\text{tors}}\cong\left\{
\begin{array}
[c]{ll}%
C_{N} & \text{for }N=1,2,\ldots,10,12,\\
C_{2}\times C_{2N} & \text{for }N=1,2,3,4.
\end{array}
\right.
\]

\end{theorem}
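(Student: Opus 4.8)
The plan is to split the theorem into an elementary structural reduction, an easy existence half, and a hard non-existence half which carries all the real content.

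First I would fix the \emph{isomorphism type} of $E(\Q)_{\text{tors}}$. By the classification of finite abelian groups, $E(\Q)_{\text{tors}} \cong C_M \times C_N$ with $M \mid N$. If $M \geq 2$, then $E$ has its full $M$-torsion rational, and the Galois-equivariant, nondegenerate Weil pairing $e_M \colon E[M] \times E[M] \to \mu_M$ forces $\mu_M \subseteq \Q$, since $\mathrm{Gal}(\overline{\Q}/\Q)$ acts trivially on $E[M]$ and therefore on $\mu_M$; hence $M \leq 2$. Thus $E(\Q)_{\text{tors}}$ is either cyclic, $C_N$, or of the form $C_2 \times C_{2k}$, and the task reduces to determining the admissible orders $N$ (resp.\ $k$). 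For the existence half, each group in the list is realized because the corresponding modular curve $X_1(N)$ or $X_1(2,N)$ has genus $0$ with a rational cusp, hence infinitely many rational points; away from the cusps these give elliptic curves over $\Q$ with the prescribed torsion, realized explicitly by the Tate-normal-form families $E_T$ of Table~\ref{ta:ETmodel}.

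The difficulty is entirely in the non-existence half: I must rule out a rational point of order $11$ or of order $N \geq 13$, and rule out $C_2 \times C_{2k}$ for $k \geq 5$. A rational point of exact order $N$ produces a non-cuspidal rational point of $X_1(N)$, so it suffices to show $X_1(N)(\Q)$ consists of cusps only for the finitely many obstructing levels. For the genus-$1$ levels $N = 11, 14, 15$ I would identify $X_1(N)$ with an explicit elliptic curve, compute its Mordell--Weil group, check that it has rank $0$, and verify by hand that every rational point is a cusp. The deep levels are the primes $p \geq 11$ together with the remaining higher-genus prime powers and composites.

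For the prime case I would run Mazur's Eisenstein-ideal argument on $X_0(p)$ and its Jacobian $J_0(p)$: study the image of the point under the cuspidal class $(0)-(\infty)$, whose order is the numerator of $\frac{p-1}{12}$; introduce the Eisenstein ideal $I \subseteq \mathbb{T}$ in the Hecke algebra and pass to the Eisenstein quotient, an abelian variety quotient of $J_0(p)$ that one proves has Mordell--Weil rank $0$ over $\Q$; and then conclude by a formal-immersion argument, showing that the embedding of $X_0(p)$ into the Eisenstein quotient is a formal immersion at the reduced cusp, so that injectivity of reduction on the formal neighborhood forces the rational point to equal the cusp. The leftover composite orders fall to the same machinery or to finite Mordell--Weil computations on the relevant Jacobians. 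The main obstacle is exactly this prime case: constructing the Eisenstein quotient and proving it has rank $0$, and verifying the formal-immersion hypothesis, is the genuinely hard heart of the matter---and the reason this is Mazur's theorem rather than an exercise---whereas the Weil-pairing reduction, the genus-$0$ existence, and the genus-$1$ finiteness are comparatively routine once that obstruction is removed.
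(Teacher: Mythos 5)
This statement is not proved in the paper at all: it is Mazur's Torsion Theorem, quoted verbatim from \cite{Mazur1977} as external input, so there is no internal argument to compare yours against. What you have written is a faithful high-level roadmap of Mazur's actual proof --- the Weil-pairing reduction showing the group is $C_N$ or $C_2\times C_{2k}$, realization of each listed group via the genus-$0$ curves $X_1(N)$ and $X_1(2,N)$ (indeed via the very families $E_T$ of Table~\ref{ta:ETmodel}), rank-$0$ Mordell--Weil computations at the genus-$1$ levels $N=11,14,15$, and the Eisenstein-ideal/formal-immersion machinery on $X_0(p)$ for the remaining prime levels. The elementary parts you do carry out are correct: the Galois-equivariance and surjectivity of $e_M$ on rational $M$-torsion force $\mu_M\subseteq\Q$, hence $M\le 2$; and a rational cusp on a genus-$0$ modular curve does give the existence half.

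The gap is that the entire content of the theorem lives in the steps you only name. Constructing the Eisenstein quotient of $J_0(p)$ and proving it has Mordell--Weil rank $0$ over $\Q$, verifying that $X_0(p)\to$ (Eisenstein quotient) is a formal immersion at the cusp in residue characteristic $2$ or $3$ so that the specialization argument pins the rational point to the cusp, and then descending from $X_0$-level information to the $X_1$-level classification and the composite orders (which requires additional work of Kubert, Ogg, and others, not merely ``the same machinery'') --- these are each substantial theorems, and your proposal defers all of them. As a survey of why the theorem is true your outline is accurate; as a proof it establishes only the reduction to cyclic-or-$C_2\times C_{2k}$ and the existence half, which is the same position the paper is in when it cites \cite{Mazur1977}.
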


In this article, we consider rational elliptic curves with a non-trivial torsion subgroup and additive reduction at some prime. Note that a rational elliptic curve having torsion subgroup $C_{2}\times C_{8}$ is semistable (see \cite[Theorem 7.1]{Barrios2020}). We also note that a rational elliptic curve $E$ is said to have \textit{full $2$-torsion} if  $C_{2}\times C_{2} \hookrightarrow E(\mathbb{Q})_{\text{tors}}$. Now let $E_T$ be the families of elliptic curves given in Table~\ref{ta:ETmodel}. These families of elliptic curves parameterize all rational elliptic curves with a non-trivial torsion point that have additive reduction at some prime, as made precise by the following proposition.

{\begingroup
\renewcommand{\arraystretch}{1.15} \small
 \begin{longtable}{C{0.5in}C{1.1in}C{1.8in}C{1.7in}c}
	\hline
	$T$ & $a_{1}$ & $a_{2}$ & $a_{3}$ & $a_{4}$ \\
	\hline

	\endfirsthead
	\hline
	$T$ & $a_{1}$ & $a_{2}$ & $a_{3}$ & $a_{4}$ \\
	\hline
	\endhead
	\hline

	\multicolumn{4}{r}{\emph{continued on next page}}
	\endfoot
	\hline
	\caption[Weierstrass Model for the family $E_{T}$]{Model for the family $E_{T}:y^{2}+a_{1}xy+a_{3}y=x^{3}+a_{2}x^{2}+a_{4}x$
}\label{ta:ETmodel}
	\endlastfoot
	
$C_{2}$ & $0$ & $2a$ & $0$ & $a^{2}-b^{2}d$ \\\hline
$C_{3}^{0}$ & $0$ & $0$ & $a$ & $0$ \\\hline
$C_{3}$ & $a$ & $0$ & $a^{2}b$ & $0$ \\\hline
$C_{4}$ & $a$ & $-ab$ & $-a^{2}b$ & $0$ \\\hline
$C_{5}$ & $a-b$ & $-ab$ & $-a^{2}b$ & $0$ \\\hline
$C_{6}$ & $a-b$ & $-ab-b^{2}$ & $-a^{2}b-ab^{2}$ & $0$\\\hline
$C_{7}$ & $a^{2}+ab-b^{2}$ & $a^{2}b^{2}-ab^{3}$ & $a^{4}b^{2}-a^{3}b^{3}$ & $0$ \\\hline
$C_{8}$ & $-a^{2}+4ab-2b^{2}$ & $-a^{2}b^{2}+3ab^{3}-2b^{4}$ & $-a^{3}b^{3}+3a^{2}
b^{4}-2ab^{5}$ & $0$ \\\hline
$C_{9}$ & $a^{3}+ab^{2}-b^{3}$ & $
a^{4}b^{2}-2a^{3}b^{3}+
2a^{2}b^{4}-ab^{5}
$ & $a^{3}\cdot a_{2}$
& $0$ \\\hline
$C_{10}$ &$
a^{3}-2a^{2}b-
2ab^{2}+2b^{3}
$ & $-a^{3}b^{3}+3a^{2}b^{4}-2ab^{5}$ & $(a^{3}-3a^{2}b+ab^{2})\cdot a_{2}$ & $0$\\\hline
$C_{12}$ & $
-a^{4}+2a^{3}b+2a^{2}b^{2}-
8ab^{3}+6b^{4}
$ & $b(a-2b)(a-b)^{2}(a^{2}-3ab+3b^{2})(a^{2}-2ab+2b^{2})
$ & $a(b-a)^3 \cdot a_{2} $& $0$ \\\hline
$C_{2}\times C_{2}$ & $0$ & $ad+bd$ & $0$ & $abd^{2}$ \\\hline
$C_{2}\times C_{4}$ & $a$ & $-ab-4b^{2}$ & $-a^{2}b-4ab^{2}$ & $0$ \\\hline
$C_{2}\times C_{6}$ & $-19a^{2}+2ab+b^{2}$ & $
-10a^{4}+22a^{3}b-
14a^{2}b^{2}+2ab^{3}
$ & $
90a^{6}-198a^{5}b+116a^{4}b^{2}+
4a^{3}b^{3}-14a^{2}b^{4}+2ab^{5}
$ & $0$ 	
\end{longtable}
\endgroup}

\begin{proposition}
[{\cite[Proposition 4.4]{Barrios2020}}]\label{rationalmodels}Let $E$ be a
rational elliptic curve and suppose further that $T\hookrightarrow E\!\left(\mathbb{Q}\right)_{\text{tors}}$ where $T$ is one of the fourteen non-trivial torsion
subgroups allowed by Theorem~\ref{MazurTorThm}. Then there are integers
$a,b,d$ such that

$\left(  1\right)  $ If $T\neq C_{2},C_{3},C_{2}\times C_{2}$, then $E$ is
$\mathbb{Q}$-isomorphic to $E_{T}\!\left(  a,b\right)  $ with $\gcd\!\left(
a,b\right)  =1$ and $a$ is positive.

$\left(  2\right)  $ If $T=C_{2}$ and $C_{2}\times C_{2}\not \hookrightarrow
E(\mathbb{Q})$, then $E$ is $\mathbb{Q}$-isomorphic to $E_{T}\!\left(
a,b,d\right)  $ with $d\neq1,b\neq0$ such that $d$ and $\gcd\!\left(
a,b\right)  $ are positive squarefree integers.

$\left(  3\right)  $ If $T=C_{3}$ and the $j$-invariant of $E$ is not $0$,
then $E$ is $\mathbb{Q}$-isomorphic to $E_{T}\!\left(  a,b\right)  $ with
$\gcd\!\left(  a,b\right)  =1$ and $a$ is positive.

$\left(  4\right)  $ If $T=C_{3}$ and the $j$-invariant of $E$ is $0$, then
$E$ is either $\mathbb{Q}$-isomorphic to $E_{T}\!\left(  24,1\right)  $ or to
the curve $E_{C_{3}^{0}}\!\left(  a\right)  :y^{2}+ay=x^{3}$ for some positive
cubefree integer $a$.

$\left(  5\right)  $ If $T=C_{2}\times C_{2}$, then $E$ is $\mathbb{Q}%
$-isomorphic to $E_{T}\!\left(  a,b,d\right)  $ with $\gcd\!\left(
a,b\right)  =1$, $d$ positive squarefree, and $a$ is even.
\end{proposition}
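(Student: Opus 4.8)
The plan is to reduce the statement to the classical single-parameter models of Kubert \cite[Table~3]{Kubert1976} and then to clear denominators in a controlled way, organizing the argument according to the order of the available torsion point. For a torsion subgroup $T$ containing a point $P$ of order $N\geq 4$ — which covers every $T$ in Table~\ref{ta:ETmodel} except $C_2$, $C_3$, and $C_2\times C_2$ — I would first place $E$ in \emph{Tate normal form} relative to $P$: after an admissible change of variables over $\mathbb{Q}$ one may take $P=(0,0)$ and
\[
E:\ y^2 + (1-c)xy - \beta\,y = x^3 - \beta\,x^2, \qquad \beta,c \in \mathbb{Q}.
\]
Requiring that $P$ have order exactly $N$ cuts out the modular curve $X_1(N)$ (and, for the $C_2\times C_{2N}$ families, $X_1(2,N)$ after imposing the extra $2$-torsion) inside the $(\beta,c)$-plane. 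For each $N$ permitted by Theorem~\ref{MazurTorThm} these curves have genus $0$ with a rational point, hence admit a rational parameterization $\beta=\beta(t)$, $c=c(t)$ with $t\in\mathbb{Q}$; this is exactly the content of Kubert's table. Writing $t=a/b$ in lowest terms and substituting turns the coefficients into rational functions of $a$ and $b$, and an admissible scaling $x\mapsto u^2x$, $y\mapsto u^3y$ with $u$ a suitable monomial in $a,b$ clears all denominators, producing the integral model $E_T(a,b)$ of Table~\ref{ta:ETmodel}. Since every step is a $\mathbb{Q}$-isomorphism, $E$ and $E_T(a,b)$ are $\mathbb{Q}$-isomorphic. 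The coprimality $\gcd(a,b)=1$ is built into writing $t$ in lowest terms, while $t=a/b=(-a)/(-b)$ shows $(a,b)$ is determined only up to a common sign, which I use to force $a>0$. This gives part $(1)$.

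The order-$3$ case runs on the same idea with the Tate normal form $y^2+a_1xy+a_3y=x^3$, for which $(0,0)$ is an inflection point of order $3$; here $c_4=a_1(a_1^3-24a_3)$, so the $j=0$ locus splits into two branches. On the branch $a_1=0$ the model degenerates to $E_{C_3^0}(a):\ y^2+a\,y=x^3$, where only the cube-scaling $x\mapsto u^2x,\ y\mapsto u^3y$ survives, so $a$ can be normalized merely up to cubes, yielding the positive cubefree parameter of part $(4)$. On the branch $a_1^3=24a_3$ the integral coprime parameters of the $C_3$-family force $a=24b$, hence $(a,b)=(24,1)$, the single exceptional curve $E_{C_3}(24,1)$; away from $j=0$ the argument of the previous paragraph applies verbatim and gives $E_{C_3}(a,b)$, settling part $(3)$.

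The genuinely multi-parameter cases $T=C_2$ and $T=C_2\times C_2$ are where I expect the real difficulty, because the surviving scaling clears only \emph{square} factors and the residual squarefree data must be recorded by hand. A $2$-torsion point lets me write $E:\ y^2=x^3+a_2x^2+a_4x$, and the relevant invariant is the discriminant $a_2^2-4a_4$ of the quadratic factor. Absorbing its square part into $u$ leaves a squarefree $d$, and matching $(a_2,a_4)$ to the normal form $(2a,\ a^2-b^2d)$ gives $a_2^2-4a_4=4b^2d$; thus $d$ is precisely the squarefree part, $d\neq1$ is equivalent to the quadratic being irreducible, i.e.\ to $C_2\times C_2\not\hookrightarrow E(\mathbb{Q})$, and $b\neq0$ encodes nondegeneracy. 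The remaining square factors of $(a,b)$ cannot always be stripped, which is why the normalization asks only that $\gcd(a,b)$ and $d$ be positive squarefree rather than $\gcd(a,b)=1$; this is part $(2)$. For full $2$-torsion the same scaling is applied to the three rational roots, written as $0,-ad,-bd$, of the cubic: one factors out the squarefree common part $d$, makes the two nonzero slopes coprime, and then uses that exactly one of $a,b,a-b$ is even — together with the freedom to choose which $2$-torsion point sits at the origin — to force $a$ even, giving part $(5)$.

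The technical heart is thus the bookkeeping in this last paragraph: tracking precisely which square (respectively cube) factors the surviving admissible transformations can absorb, and verifying that the stated squarefree, cubefree, coprimality, positivity, and parity constraints single out a well-defined representative of each $\mathbb{Q}$-isomorphism class. Everything else reduces to substitution into the Kubert models followed by clearing denominators.
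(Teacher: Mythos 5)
The paper itself offers no proof of this proposition: it is imported verbatim from \cite[Proposition 4.4]{Barrios2020}, so the only meaningful comparison is with that source, whose argument follows essentially the strategy you outline --- Tate normal form relative to the torsion point, rational parameterization of the genus-$0$ curves $X_1(N)$ and $X_1(2,N)$ via Kubert's table, homogenization $t=a/b$ in lowest terms, and normalization under the residual admissible changes of variables. Your skeleton is the right one; in particular the branch analysis of the $j=0$ locus via $c_4=a_1(a_1^3-24a_3)$ for $C_3$ and the parity/relabeling argument for full $2$-torsion are correct in outline.

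Two gaps keep this from being a proof. First, the assertion that clearing denominators ``produces the integral model $E_T(a,b)$ of Table~\ref{ta:ETmodel}'' is precisely the content that must be verified family by family: for $T=C_9, C_{10}, C_{12}, C_2\times C_6$ the tabulated coefficients are not the naive homogenizations, and one has to exhibit the specific monomial $u$ in $a,b$, check integrality, and confirm the coprimality and sign normalizations for each of the eleven families with $|T|\geq 4$ (including the one-line check, which you assert but never make, that $E_T(-a,-b)$ is $\mathbb{Q}$-isomorphic to $E_T(a,b)$ via $u=-1$, so that $a>0$ can be imposed). Second, and more seriously, in part $(2)$ your construction cannot deliver the stated positivity of $d$: your $d$ is the squarefree part of $a_2^2-4a_4$, and its sign equals the sign of the discriminant $\Delta=64\,b^2 d\,(b^2d-a^2)^2$ of $E_{C_2}(a,b,d)$, which is a $\mathbb{Q}$-isomorphism invariant. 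The curve $y^2=x^3+x$ has a rational $2$-torsion point, no full $2$-torsion, and negative discriminant, so it admits no model $E_{C_2}(a,b,d)$ with $d>0$. You must either argue that the positivity in part $(2)$ is meant to apply only to $\gcd(a,b)$ --- which is what your construction actually produces, and is all that is needed for the equivalence of $d\neq 1$ with irreducibility of the quadratic --- or explain how your $d$ is to be reconciled with the one in the statement; as written, the proposal silently proves a slightly different normalization than the one claimed.
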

\vspace{-0.5em}

By \cite[Theorem 7.1]{Barrios2020}, there are necessary and sufficient conditions on the parameters of $E_{T}$ which determine the primes $p$ at which $E_{T}/\mathbb{Q}$ has additive reduction. 
This leads us to apply Lemma~\ref{LemmaPap} or Tate's Algorithm to a $\mathbb{Q}_{p}$-isomorphic minimal model of $E_{T}/\mathbb{Q}_{p}$. The following lemma provides us with these minimal models.

\begin{lemma}
\label{Lemma for minimal disc} Suppose $E_{T}/\mathbb{Q}$ has additive reduction at a
prime $p$.

(1) For $T\neq C_{2},\,C_{3},\,C_{4},C_{2}\times C_{4}$, the elliptic curve
$E_{T}/
\mathbb{Q}
_{p}$ is a minimal model.

(2) For $T=C_{2}$, the elliptic curve $E_{T}/\mathbb{Q}_{p}$ is a minimal model at each odd prime $p$. The elliptic curve $E_{T}/\mathbb{Q}_{2}$ is a minimal model if
either $v_{2}\!\left(  b^{2}d-a^{2}\right)  \leq3$,
$v_{2}\!\left(  b\right)  =1$, or $ v_{2}\!\left(
b\right)  \neq1$. If $v_{2}\!\left(  b\right)  =1,v_{2}\!\left(  b^{2}%
d-a^{2}\right)  \geq4$, then the elliptic
curve $E_{T}^{\prime}$ attained from $E_{T}$ via the admissible change of
variables $x\longmapsto4x$ and $y\longmapsto8y+8x$ is a minimal model for
$E_{T}$ over $\mathbb{Q}_{2}$.

(3) For $T=C_{3},\,C_{4}$, we define
\[
a=%
\begin{cases}
c^{3}d^{2}e\qquad & \text{if }T=C_{3},\\
c^{2}d\qquad & \text{if }T=C_{4},
\end{cases}
\qquad\text{and}\qquad u_{T}=
\begin{cases}
c^2d \quad & T=C_{3},\\
c\quad & T=C_{4}.%
\end{cases}
\]
Then the elliptic curve $E_{T}^{\prime}$ attained from $E_{T}$ via the
admissible change of variables $x\longmapsto u_{T}^{2}x$ and $y\longmapsto
u_{T}^{3}y$ is a minimal model over $\mathbb{Q}_p$.

$\left(  4\right)  $ For $T=C_{2}\times C_{4}$, the elliptic curve $E_{T}/\mathbb{Q}_{p}$ is a minimal model if $p$ is an odd prime or $p=2$ with
$v_{2}\!\left(  a\right)  \leq1$. If $p=2$ and $v_{2}\!\left(  a\right)  >1$,
then the elliptic curve $E_{T}^{\prime}$ attained from $E_{T}$ via the
admissible change of variables $x\longmapsto4x$ and $y\longmapsto8y$ is a
minimal model.
\end{lemma}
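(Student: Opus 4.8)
The plan is to rely on the standard criterion for minimality of an integral Weierstrass model over $\mathbb{Q}_{p}$: if $E/\mathbb{Q}_{p}$ has invariants $c_{4}$ and $\Delta$ as in \eqref{basicformulas}, then the model is minimal whenever $v_{p}(c_{4})<4$ or $v_{p}(\Delta)<12$ (see \cite[Chapter~VII]{Silverman1994}), and for $p\geq 5$ this is also necessary. I will combine this with the fact that under an admissible change of variables with scaling factor $u$ the invariants transform as $c_{4}\mapsto u^{-4}c_{4}$ and $\Delta\mapsto u^{-12}\Delta$, independently of the translation parameters $r,s,t$; hence checking minimality of any transformed model reduces to tracking these two valuations, while $r,s,t$ serve only to keep the transformed coefficients $p$-integral. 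Throughout I would use the explicit factored expressions for $c_{4}$ and $\Delta$ of each $E_{T}$ obtained from \eqref{basicformulas} (as recorded in \cite{Barrios2020}) together with the coprimality and squarefreeness constraints of Proposition~\ref{rationalmodels}.

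For part (1) I would argue family by family, the goal in each case being to verify $v_{p}(c_{4})<4$ or $v_{p}(\Delta)<12$ at a prime of additive reduction. The case $T=C_{3}^{0}$ is immediate since $c_{4}=0$ and $\Delta=-27a^{4}$ with $a$ cubefree, so $v_{p}(\Delta)<12$. For the two-parameter families $C_{5},\dots,C_{12}$ and $C_{2}\times C_{6}$, the key observation is that additive reduction forces $p\nmid ab$: if $p$ divided one of the coprime parameters, the explicit $c_{4}$ would reduce to a unit power of the other parameter, contradicting $v_{p}(c_{4})\geq 1$. Consequently every monomial factor $a^{i}b^{j}$ of $\Delta_{T}$ is a $p$-unit, and one checks that the remaining essential factor has valuation below $12$ at additive primes. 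For $T=C_{2}\times C_{2}$ the parameter $d$ permits $p\mid a$ together with $p\mid d$, so $v_{p}(\Delta)$ can be large, but the squarefreeness of $d$ and the factorization $c_{4}=16d^{2}(a^{2}-ab+b^{2})$ keep $v_{p}(c_{4})\leq 3$, again yielding minimality; and in the complementary subcase $v_{p}(c_{4})\geq 4$ one finds $\Delta=16a^{2}b^{2}d^{6}(a-b)^{2}$ with $p\nmid ab(a-b)$, so $v_{p}(\Delta)=6<12$. Since only the sufficient direction of the criterion is used, the bad primes $p=2,3$ require no special converse — just a direct valuation computation from the explicit invariants.

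For parts (2)–(4) the given model can fail the criterion, and the plan is to apply the prescribed change of variables and re-check minimality of the result $E_{T}'$. For $T=C_{3},C_{4}$ in part (3), writing $a=c^{3}d^{2}e$ (resp. $a=c^{2}d$) as the canonical decomposition with $d,e$ squarefree and coprime isolates, at each prime $q$, the precise power of $q$ making $\Delta_{T}$ a twelfth-power multiple; integrality of $a_{i}'=a_{i}/u_{T}^{i}$ is automatic since $u_{T}^{i}\mid a_{i}$, and the factor $u_{T}=c^{2}d$ (resp. $c$) is calibrated so that at every $q$ one has $v_{q}(\Delta')\in\{0,4,8\}$ or else $v_{q}(c_{4}')\leq 3$, giving minimality. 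For parts (2) and (4) the only obstruction is at $p=2$: at odd primes the part (1) argument applies verbatim, while at $p=2$ one computes $v_{2}(c_{4})$ and $v_{2}(\Delta)$ from $c_{4}=16(a^{2}+3b^{2}d)$ and $\Delta=64\,b^{2}d\,(a^{2}-b^{2}d)^{2}$ (for $C_{2}$), and the analogous expressions for $C_{2}\times C_{4}$, isolates the subcase in which both $v_{2}(c_{4})\geq 4$ and $v_{2}(\Delta)\geq 12$, and applies the stated substitution $x\mapsto 4x,\ y\mapsto 8y+8x$ (resp. $x\mapsto 4x,\ y\mapsto 8y$). The $s$-shift is exactly what keeps $a_{2}'$ integral once the subcase hypotheses $v_{2}(b)=1$ and $v_{2}(b^{2}d-a^{2})\geq 4$ (which force $v_{2}(a)=1$) are in place.

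I expect the main obstacle to be the $p=2$ analysis in parts (2) and (4). Because the minimality criterion is only sufficient at $p=2$, proving that the transformed model $E_{T}'$ is genuinely minimal — and not reducible once more — demands a careful $2$-adic computation of $v_{2}(c_{4}')$ and $v_{2}(\Delta')$ modulo high powers of $2$, alongside verifying integrality of each transformed coefficient. The second delicate point is confirming, in part (3), that the canonical decomposition leaves the pair $\bigl(v_{q}(c_{4}'),v_{q}(\Delta')\bigr)$ below the minimality threshold uniformly in $q$, rather than only at primes $q\nmid 6$, since at $q=2,3$ the essential factors $a-27b$ and $a+16b$ can contribute extra valuation that must be absorbed by the $c_{4}'$-side of the criterion.
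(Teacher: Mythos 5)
Your overall strategy rests on the sufficient criterion ``$v_{p}(c_{4})<4$ or $v_{p}(\Delta)<12$ implies minimality,'' and this is exactly where the proof breaks: in several of the cases the lemma covers, the minimal model genuinely has $v_{p}(c_{4})\geq4$ \emph{and} $v_{p}(\Delta)\geq12$ simultaneously, so the criterion is silent and your argument cannot conclude. Concretely, for $T=C_{2}\times C_{2}$ at $p=2$ one has $c_{4}=16d^{2}(a^{2}-ab+b^{2})$, so $v_{2}(c_{4})\geq4$ \emph{always} (your claim that squarefreeness of $d$ keeps $v_{p}(c_{4})\leq3$ is only valid at odd $p$), while $v_{2}(\Delta)=4+2v_{2}(a)+\cdots$ reaches $12$ already for $v_{2}(a)=4$ with $bd\equiv3\bmod 4$ (an additive case per Theorem~\ref{ThmC2xC2}); your fallback ``$v_{p}(\Delta)=6<12$'' likewise only holds at odd $p$. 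Similarly, for $T=C_{8}$ the additive prime is $2$ with Kodaira type $\mathrm{I}_{n}^{\ast}$, $n=2v_{2}(a)-1$, and $v_{2}(\Delta)=n+8\geq13$ with $v_{2}(c_{4})=4$ once $v_{2}(a)\geq3$ (see Table~\ref{ta:PapTableIV}), so your part~(1) claim that ``the remaining essential factor has valuation below $12$ at additive primes'' is false there. The same problem recurs in part~(2): for $v_{2}(b)=1$ and $v_{2}(b^{2}d-a^{2})\geq8$ the rescaled model $E_{T}'$ still has $v_{2}(c_{4}')\geq4$ and $v_{2}(\Delta')\geq12$. You flag the $p=2$ analysis as delicate, but computing $v_{2}(c_{4}')$ and $v_{2}(\Delta')$ ``modulo high powers of $2$'' cannot resolve these cases no matter how carefully it is done; what is missing is a tool that certifies minimality when both valuations exceed the thresholds, e.g.\ Kraus's necessary-and-sufficient congruence conditions at $p=2,3$ for the existence of an integral model with invariants $(c_{4}/p^{4},c_{6}/p^{6})$, or an explicit run of Tate's/Laska--Kraus--Connell's algorithm showing no restart occurs.

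For comparison, the paper does not reprove any of this: its proof of Lemma~\ref{Lemma for minimal disc} is a one-line citation of Theorems 6.1 and 7.1 of \cite{Barrios2020}, where the minimal discriminants of the families $E_{T}$ at every prime were already determined. If you want a self-contained argument, you must either import those results or replace your valuation criterion with one of the genuinely necessary-and-sufficient tools above for the residual $p\in\{2,3\}$ cases.
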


\begin{proof}
This follows from Theorems 6.1 and 7.1 of \cite{Barrios2020}.
\end{proof}

\section{Local data of \texorpdfstring{$E_T$}{} \label{PfMainThm}}
At every prime $p$ for which $E_T$ has additive reduction, we determine the Kodaira-Néron type at $p$, conductor exponent $f_p$, and local Tamagawa number $c_p$ with respect to explicit conditions on the parameters of $E_T$. 
In what follows, we consider the families $E_T$ for $T=C_{2},C_{3},C_{3}^0,C_{4},C_{2}\times C_{2}$, separately. Specifically, Theorems~\ref{ThmC2at2}~and~\ref{ThmforC2podd} are for $E_{C_{2}}$; Theorems~\ref{ThmforC30}, \ref{ThmforC3}, \ref{ThmforC4}, and~\ref{ThmC2xC2} are for $E_{C_{3}^0}$, $E_{C_{3}}$, $E_{C_{4}}$, and $E_{C_{2}\times C_{2}}$, respectively.
The remaining families $E_T$ are considered in Theorem~\ref{ThmotherTs}.

The proofs of these results rely on Tate's Algorithm and Lemma~\ref{LemmaPap}. In particular, we require knowledge
of a minimal model for $E_{T}/\mathbb{Q}_{p}$ at a prime $p$ for which $E_{T}$ has additive reduction. Using Lemma~\ref{Lemma for minimal disc} we find a minimal model of $E_{T}$ over $\mathbb{Q}_{p}$ and the associated triplet $\left(
v_{p}\!\left(  c_{4}\right)  ,v_{p}\!\left(  c_{6}\right)  ,v_{p}\!\left(
\Delta\right)  \right)$. Using this triplet, we deduce the possible Kodaira-N\'{e}ron types by Lemma~\ref{LemmaPap}. In the proofs we simply refer to Tables~\ref{ta:PapTableIV}~and~\ref{ta:PapTableIandII} instead of Lemma~\ref{LemmaPap}. If Tables~\ref{ta:PapTableIV}~and~\ref{ta:PapTableIandII} do not determine the Kodaira-N\'{e}ron type uniquely, then we
consider an admissible change of variables to $E_{T}$ to attain a
minimal model $F_{T}$ from which the desired local data can be attained via Tate's
Algorithm.
We will use Lemma~\ref{LemmaIn} to prove our conclusions in the case when the Kodaira-N\'{e}ron type is $\mathrm{I}_n^{\ast}$ for $n>0$.
 To ease the presentation of the proofs, we will at times state the $p$-adic valuations of $c_{4}$, $c_{6}$, $\Delta$, and Weierstrass coefficients $a_{j}$ of $F_{T}$ without justification, but note that these valuations have been verified via SageMath~\cite{sagemath} and they are available at \cite{LDWebpage}.

\subsection{Case of \texorpdfstring{$2$}{}-torsion with no full \texorpdfstring{$2$}{}-torsion.}

We start by considering the family $E_{C_2}=E_{C_2}(a,b,d)$, where  $a,b,d$ are integers with $d\neq1,b\neq0$ such that $\gcd\!\left(  a,b\right)$ and $d$ are squarefree. Then by Proposition~\ref{rationalmodels}, $E_{C_2}$ parameterizes all rational elliptic curves that have a $2$-torsion point, but do not have full $2$-torsion. The theorem below gives the necessary and sufficient conditions on the
parameters $a,b,d$ to determine the primes at which $E_{C_2}$ has additive
reduction. This result is equivalent to \cite[Theorem~7.1]{Barrios2020} for $E_{C_2}$.

\begin{theorem}\label{TisC2Addred2Thm7}The family of elliptic curves $E_{C_2}$ has additive reduction at an odd prime $p$ if and only
if $p$ divides $\gcd\!\left(  a,bd\right)  $. Moreover, $E_{C_2}$ has additive
reduction at $2$ if and only if one of the following conditions is satisfied:

$\left(  1\right)  $ $v_{2}\!\left(  b^{2}d-a^{2}\right)  \leq7$ with
$v_{2}\!\left(  a\right)  =v_{2}\!\left(  b\right)  =1$;

$\left(  2\right)  $ $v_{2}\!\left(  b\right)  \geq3$ with $a\not \equiv
3\ \operatorname{mod}4$;

$\left(  3\right)  $ $v_{2}\!\left(  b\right)  =0,2$;

$\left(  4\right)  $ $v_{2}\!\left(  b\right)  =1$ and $v_{2}\!\left(
a\right)  \neq1$;

$\left( 5\right)  $ $v_{2}\!\left(  b^{2}d-a^{2}\right)  \geq8$ with
$a\equiv6\ \operatorname{mod}8$.

\end{theorem}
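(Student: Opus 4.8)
The plan is to read off the reduction type at each prime from the standard invariants of the Weierstrass model together with a minimal model, using that a curve presented by a minimal model has additive reduction at $p$ exactly when $v_p(c_4)>0$ and $v_p(\Delta)>0$. First I would record, from $E_{C_2}(a,b,d):y^2=x^3+2ax^2+(a^2-b^2d)x$, the invariants $c_4=16(a^2+3b^2d)$, $c_6=64a(a^2-9b^2d)$, and $\Delta=64\,b^2d\,(a^2-b^2d)^2$; since $d$ is squarefree with $d\neq1$ we have $a^2\neq b^2d$, so $\Delta\neq0$.

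For an odd prime $p$, Lemma~\ref{Lemma for minimal disc} shows $E_{C_2}/\Q_p$ is already minimal, and the powers of $2$ in $c_4$ and $\Delta$ are units, so additive reduction is equivalent to $v_p(a^2+3b^2d)>0$ together with $v_p\!\big(b^2d(a^2-b^2d)^2\big)>0$. I would then prove the equivalence with $p\mid\gcd(a,bd)$ in both directions. If $p\mid\gcd(a,bd)$, then $p\mid a$ and $p$ divides $b$ or $d$, and a direct check of these two subcases (treating $p=3$ separately because of the coefficient $3$, where it causes no trouble) makes both valuations positive. Conversely, $v_p(c_4)>0$ gives $p\mid a^2+3b^2d$; if $p\nmid bd$ then $v_p(\Delta)>0$ forces $p\mid a^2-b^2d$, whence $p$ divides $(a^2+3b^2d)-(a^2-b^2d)=4b^2d$, so $p\mid b^2d$ as $p$ is odd---a contradiction. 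Hence $p\mid bd$, and then $p\mid a^2+3b^2d$ forces $p\mid a$, giving $p\mid\gcd(a,bd)$.

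The prime $p=2$ is the main obstacle. The difficulty is that the displayed model always satisfies $v_2(c_4)\geq4$ and $v_2(\Delta)\geq6$, so the reduction type is decided entirely by how far the model minimalizes: there is additive reduction precisely when the minimal model retains both $v_2(c_4)>0$ and $v_2(\Delta)>0$, and good or multiplicative reduction as soon as minimalization (by $u=2$, and in the deepest cases by $u=4$) drives $v_2(c_4)$ or $v_2(\Delta)$ down to $0$. My plan is to split on the value of $v_2(b)$, distinguishing $v_2(b)=0,1,2$ from $v_2(b)\geq3$, using that $\gcd(a,b)$ is squarefree to force $v_2(a)\leq1$ once $v_2(b)\geq2$. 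In each regime I would pass to the appropriate minimal model via Lemma~\ref{Lemma for minimal disc}(2)---the given $E_{C_2}$ when $v_2(b)\neq1$, and the transform $E_{C_2}'$ obtained from $x\mapsto4x$, $y\mapsto8y+8x$ when $v_2(b)=1$ and $v_2(b^2d-a^2)\geq4$---and compute $v_2(c_4)$ and $v_2(\Delta)$ of that model to record when both stay positive.

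Conditions (1)--(5) are the fingerprints of this minimalization, and verifying them is the delicate part. The split $v_2(b^2d-a^2)\leq7$ versus $\geq8$ in (1) and (5) records whether a further round of reduction is available after the change of variables, while the residue conditions $a\not\equiv3\pmod{4}$ in (2) and $a\equiv6\pmod{8}$ in (5) are precisely the Kraus-type congruences that decide whether the final model can still be reduced---i.e., whether $v_2(c_4)$ collapses to $0$ (non-additive) or remains positive (additive). I would confirm these by running Tate's Algorithm on the transformed model, taking care that determining the reduction type must not circularly invoke Lemma~\ref{Lemma for minimal disc}(2), whose minimality assertions presuppose additive reduction. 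Since the resulting criterion is exactly \cite[Theorem~7.1]{Barrios2020} specialized to $E_{C_2}$, the statement may alternatively be deduced by citing that result and matching the cases.
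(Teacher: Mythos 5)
The first thing to note is that the paper does not actually prove Theorem~\ref{TisC2Addred2Thm7}: it simply asserts that the statement ``is equivalent to \cite[Theorem~7.1]{Barrios2020} for $E_{C_2}$'' and moves on. So your proposal is being compared against a citation, not an argument, and in that light it does strictly more than the paper. Your computation of the invariants is correct ($c_4=16(a^2+3b^2d)$, $c_6=64a(a^2-9b^2d)$, $\Delta=64b^2d(b^2d-a^2)^2$; incidentally the paper's displayed $c_6$ in the proof of Theorem~\ref{ThmC2at2} drops the factor of $a$), and your odd-prime argument is complete and correct: the forward implication gives $v_p(c_4),v_p(\Delta)>0$ directly, and the converse via $(a^2+3b^2d)-(a^2-b^2d)=4b^2d$ is clean and handles $p=3$ without extra fuss. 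One small repair is needed there: Lemma~\ref{Lemma for minimal disc} asserts minimality only under the hypothesis of additive reduction, so in the forward direction you should verify minimality independently; this is easy, e.g.\ for $p\geq5$ one checks $v_p(c_4)<4$ or $v_p(\Delta)<12$ in each subcase (in the only delicate regime $v_p(a)=v_p(b)=1$, $v_p(d)=0$ one has $a^2+3b^2d=4b^2d-(b^2d-a^2)$, forcing $v_p(c_4)=2$ whenever $v_p(\Delta)\geq12$).

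For $p=2$, however, what you have written is a strategy rather than a proof. The entire content of conditions (1)--(5) --- the threshold $v_2(b^2d-a^2)\leq7$ versus $\geq8$, the congruences $a\not\equiv3\ \operatorname{mod}4$ and $a\equiv6\ \operatorname{mod}8$ --- lives in the case analysis you defer, and you correctly identify but do not resolve the circularity: Lemma~\ref{Lemma for minimal disc}(2) presupposes additive reduction, so determining minimal models in the putatively semistable regimes requires an independent tool (Kraus--Laska--Connell or a direct run of Tate's algorithm on each branch). If one insists on a self-contained proof, this is a genuine gap. That said, your closing observation that the statement is exactly \cite[Theorem~7.1]{Barrios2020} specialized to $E_{C_2}$ is precisely the justification the paper itself gives, so taking that route makes your write-up match the paper's (non-)proof while adding a worked odd-prime argument on top.
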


Next, we compute the local data for $E_{C_2}$ at $p=2$ and $p\neq2$ separately. The following theorem deals with the cases when $E_{C_2}$ has additive reduction at $p=2$.

\begin{theorem}
\label{ThmC2at2}The family of elliptic curves $E_{C_{2}}$ has additive reduction at $2$ if and only if the parameters of $E_{C_2}$ satisfy one of the conditions on $a,b,d$ listed in Table~\ref{TableforC2}. The Kodaira-N\'{e}ron type at $2$,
conductor exponent $f_{2}$, and local Tamagawa number $c_{2}$ are given as follows.
\end{theorem}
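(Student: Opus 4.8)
The plan is to invoke Lemma~\ref{Lemma for minimal disc}(2) to reduce to an explicit minimal model of $E_{C_2}$ over $\mathbb{Q}_2$, and then run the analysis of Lemma~\ref{LemmaPap} (i.e.\ Tables~\ref{ta:PapTableIV} and~\ref{ta:PapTableIandII}), supplemented by Tate's Algorithm and Lemma~\ref{LemmaIn} where the triplet is insufficient. First I would compute, directly from the Weierstrass coefficients in Table~\ref{ta:ETmodel} for $T=C_2$ (namely $a_1=a_3=0$, $a_2=2a$, $a_4=a^2-b^2d$), the quantities $c_4$, $c_6$, and $\Delta$ via the formulas in \eqref{basicformulas}. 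One finds $b_2=8a$, $b_4=2(a^2-b^2d)$, $b_6=0$, so that $c_4=16(a^2+12b^2d)$ and $\Delta=64b^2d(a^2-b^2d)^2$ (up to routine simplification). The key observation is that since additive reduction at $2$ forces $p=2\mid\gcd(a,bd)$ or one of the $2$-adic conditions in Theorem~\ref{TisC2Addred2Thm7}, the $2$-adic valuations of $c_4,c_6,\Delta$ are controlled by the valuations $v_2(a)$, $v_2(b)$, and $v_2(b^2d-a^2)$.

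The main organizational step is to stratify the parameter space of $(a,b,d)$ exactly according to the five disjoint cases of Theorem~\ref{TisC2Addred2Thm7}, since these are precisely the conditions giving additive reduction at $2$; this produces the rows of Table~\ref{TableforC2}. Within each case I would compute the triplet $(v_2(c_4),v_2(c_6),v_2(\Delta))$ for the appropriate minimal model --- recalling from Lemma~\ref{Lemma for minimal disc}(2) that in the subcase $v_2(b)=1,\ v_2(b^2d-a^2)\ge4$ one must first pass to the model $E_{C_2}'$ obtained via $x\mapsto 4x$, $y\mapsto 8y+8x$ before the valuations are meaningful. Feeding each triplet into Tables~\ref{ta:PapTableIV} and~\ref{ta:PapTableIandII} then pins down the Kodaira--N\'eron type, $f_2$, and $c_2$ in the cases where the triplet determines them uniquely. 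Because $p=2$ is the hardest prime, many strata will land in the ambiguous rows of Table~\ref{ta:PapTableIV}, where the triplet alone does not suffice; there I would apply the prescribed number of Tate steps to the explicit model, or, when the type is $\mathrm{I}_n^*$, verify the valuation hypotheses \eqref{LemmaInval} and check the quadratic \eqref{LemmaInpoly} for distinct roots to read off $n$, $f_2$, and $c_2$ directly from Lemma~\ref{LemmaIn}.

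I expect the main obstacle to be precisely the $p=2$ ambiguities flagged in Lemma~\ref{LemmaPap}: the triplet $(v_2(c_4),v_2(c_6),v_2(\Delta))$ frequently fails to separate, say, types $\mathrm{III}$ from $\mathrm{I}_0^*$ or distinct values of $n$ in the $\mathrm{I}_n^*$ family, so the bookkeeping of how far to push Tate's Algorithm in each stratum --- and the arithmetic of tracking $c_6$ and the auxiliary congruences (e.g.\ $a\bmod 4$, $a\bmod 8$, the exact value of $v_2(b^2d-a^2)$) that distinguish the subcases --- is the delicate part. A secondary subtlety is ensuring that the admissible change of variables in the non-minimal subcase is applied consistently so that the computed $\Delta$ is genuinely the minimal discriminant, which is what makes $f_2=v_2(\Delta)-4-n$ correct in Lemma~\ref{LemmaIn}. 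Once each of the finitely many strata is resolved, assembling them yields the claimed equivalence and the entries of Table~\ref{TableforC2}, completing the proof.
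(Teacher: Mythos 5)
Your proposal follows essentially the same route as the paper: pass to the minimal model via Lemma~\ref{Lemma for minimal disc}(2), compute the triplet $\left(v_{2}(c_{4}),v_{2}(c_{6}),v_{2}(\Delta)\right)$ on each stratum, read off the type from Table~\ref{ta:PapTableIV} where it is determined, and otherwise run Tate's Algorithm on explicitly translated models (the paper's $F_{C_2,i}$) and invoke Lemma~\ref{LemmaIn} for the $\mathrm{I}_n^{\ast}$ cases. One small arithmetic slip: $c_{4}=b_{2}^{2}-24b_{4}=16\left(a^{2}+3b^{2}d\right)$, not $16\left(a^{2}+12b^{2}d\right)$, though this does not affect the viability of the plan.
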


{\begingroup
\small
\renewcommand{\arraystretch}{1.04}
 \begin{longtable}{C{0.4in}cC{0.05in}c}
	\hline
	K-N & Conditions on $a,b,d$ & $f_{2}$ & $c_{2}$\\ type& & & \\
	\hline

	\endfirsthead
	\hline
	K-N& Conditions on $a,b,d$ & $f_{2}$ & $c_{2}$\\ type& & & \\
	\hline
	\endhead
	\hline
	\multicolumn{4}{r}{\emph{continued on next page}}
	\endfoot
	\hline
	\caption{Local data for $E_{C_{2}}(a,b,d)$ at $p=2$}
	\endlastfoot
II & \multicolumn{1}{l}{$v_{2}\!\left(  b^{2}d-a^{2}\right)  =4,\ v_{2}\!\left(
a\right)  =v_{2}\!\left(  b\right)  =1,$} & $4$ & $1$\\
& \multicolumn{1}{l}{$b^{2}d-a^{2}-8a\equiv32\ \operatorname{mod}64$} &  & \\ \cmidrule(lr){2-4}
& \multicolumn{1}{l}{$v_{2}\!\left(  ab\right)  =v_{2}\!\left(  b^{2}%
d-a^{2}\right)  =0,\ v_{2}\!\left(  d\right)  =1$} & $7$ & $1$\\ \cmidrule(lr){2-4}
& \multicolumn{1}{l}{$v_{2}\!\left(  b\right)  =v_{2}\!\left(  b^{2}%
d-a^{2}\right)  =0,\ v_{2}\!\left(  a\right)  >0$,} & $6$ & $1$\\
& \multicolumn{1}{l}{ $d\equiv3\ \operatorname{mod}%
4$} & & \\ \cmidrule(lr){1-4}
$\mathrm{III}$ & \multicolumn{1}{l}{$v_{2}\!\left(  b\right)  =v_{2}\!\left(
b^{2}d-a^{2}\right)  =0,\ v_{2}\!\left(  a\right)  >0$,} & $5$ & $2$\\
& \multicolumn{1}{l}{$d\equiv1\ \operatorname{mod}4$} &  & \\ \cmidrule(lr){2-4}
& \multicolumn{1}{l}{$v_{2}\!\left(  b\right)  =0,\ v_{2}\!\left(
b^{2}d-a^{2}\right)  =1,\ v_{2}\!\left(  a\right)  >0$} & $8$ & $2$\\ \cmidrule(lr){2-4}
& \multicolumn{1}{l}{$v_{2}\!\left(  ab\right)  =0,\ v_{2}\!\left(
b^{2}d-a^{2}\right)  =1$} & $7$ & $2$\\ \cmidrule(lr){2-4}
& \multicolumn{1}{l}{$v_{2}\!\left(  b^{2}d-a^{2}\right)  =5,\ v_{2}\!\left(
a\right)  =v_{2}\!\left(  b\right)  =1$} & $5$ & $2$\\ \cmidrule(lr){2-4}
& \multicolumn{1}{l}{$b^{2}d-a^{2}\equiv48\ \operatorname{mod}64,\ a\equiv
6\ \operatorname{mod}8$} & $3$ & $2$\\ \cmidrule(lr){1-4}
IV & \multicolumn{1}{l}{$b^{2}d-a^{2}\equiv16\ \operatorname{mod}%
64,\ a\equiv2\ \operatorname{mod}8$} & $2$ & \multicolumn{1}{c}{$1$ if
$b^{2}d-a^{2}\not \equiv 8a\ \operatorname{mod}128$}\\
& \multicolumn{1}{l}{} &  & \multicolumn{1}{c}{$3$ if $b^{2}d-a^{2}%
\equiv8a\ \operatorname{mod}128$}\\ \cmidrule(lr){1-4}
$\mathrm{I}_{0}^{\ast}$ & \multicolumn{1}{l}{$v_{2}\!\left(  b\right)  =1,\ a\equiv
3\ \operatorname{mod}4,\ d\equiv1\ \operatorname{mod}4$} & $4$ & $2$\\ \cmidrule(lr){2-4}
& \multicolumn{1}{l}{$v_{2}\!\left(  b\right)  =1,\ a\equiv
3\ \operatorname{mod}4,\ d\equiv2\ \operatorname{mod}4$} & $5$ & $2$\\ \cmidrule(lr){2-4}
& \multicolumn{1}{l}{$v_{2}\!\left(  b\right)  =1,\ a\equiv
1\ \operatorname{mod}4,\ d\equiv2\ \operatorname{mod}4$} & $5$ & $1$\\ \cmidrule(lr){2-4}
& \multicolumn{1}{l}{$v_{2}\!\left(  b\right)  =1,\ a\equiv
1\ \operatorname{mod}4,\ d\equiv3\ \operatorname{mod}4$} & $4$ & $1$\\ \cmidrule(lr){2-4}
& \multicolumn{1}{l}{$v_{2}\!\left(  b\right)  =0,\ v_{2}\!\left(  b^{2}%
d-a^{2}\right)  =2$} & $6$ & $2$\\ \cmidrule(lr){2-4}
& \multicolumn{1}{l}{$v_{2}\!\left(  b^{2}d-a^{2}\right)  =6,\ a\equiv
6\ \operatorname{mod}8$} & $4$ & $2$\\ \cmidrule(lr){1-4}
$\mathrm{I}_{1}^{\ast}$ & \multicolumn{1}{l}{$v_{2}\!\left(  b^{2}d-a^{2}\right)
=6,\ a\equiv2\ \operatorname{mod}8$} & $3$ & \multicolumn{1}{c}{$2$ if$\text{
}b^{2}d-a^{2}+16a\not \equiv 96\ \operatorname{mod}256$}\\
& \multicolumn{1}{l}{} &  & \multicolumn{1}{c}{$4$ if $b^{2}d-a^{2}%
+16a\equiv96\ \operatorname{mod}256$}\\ \cmidrule(lr){2-4}
& \multicolumn{1}{l}{$v_{2}\!\left(  b\right)  =1,\ a,d\equiv
1\ \operatorname{mod}4$} & $3$ & \multicolumn{1}{c}{$2$ if$\text{ }%
ad\equiv5\ \operatorname{mod}8$}\\
& \multicolumn{1}{l}{} &  & \multicolumn{1}{c}{$4$ if$\text{ }ad\equiv
1\ \operatorname{mod}8$}\\ \cmidrule(lr){1-4}
$\mathrm{I}_{2}^{\ast}$ & \multicolumn{1}{l}{$v_{2}\!\left(  b^{2}d-a^{2}\right)
=2,\ v_{2}\!\left(  a\right)  =v_{2}\!\left(  b\right)  =1$} & $7$ &
\multicolumn{1}{c}{$2$ if $a-d\equiv4\ \operatorname{mod}8$}\\
& \multicolumn{1}{l}{} &  & \multicolumn{1}{c}{$4$ if $a-d\equiv
0\ \operatorname{mod}8$}\\ \cmidrule(lr){2-4}
& \multicolumn{1}{l}{$v_{2}\!\left(  b^{2}d-a^{2}\right)  =7,\ a\equiv6\ \operatorname{mod}8$} & $4$ & $4$\\ \cmidrule(lr){2-4}
& \multicolumn{1}{l}{$v_{2}\!\left(  b\right)  =0,\ v_{2}\!\left(
b^{2}d-a^{2}\right)  =3$} & $6$ & $4$\\ \cmidrule(lr){2-4}
& \multicolumn{1}{l}{$v_{2}\!\left(  b\right)  =1,\ v_{2}\!\left(  a\right)
>1,\ d\equiv1\ \operatorname{mod}4$} & $6$ & \multicolumn{1}{c}{$2$ if
$d\equiv5\ \operatorname{mod}8$}\\
& \multicolumn{1}{l}{} &  & \multicolumn{1}{c}{$4$ if $d\equiv
1\ \operatorname{mod}8$}\\ \cmidrule(lr){2-4}
& \multicolumn{1}{l}{$v_{2}\!\left(  b\right)  =2,\ v_{2}\!\left(  d\right)
=0,\ a\equiv1\ \operatorname{mod}4$} & $4$ & \multicolumn{1}{c}{$2$ if
$d\equiv3\ \operatorname{mod}4$}\\
& \multicolumn{1}{l}{} &  & \multicolumn{1}{c}{$4$ if $d\equiv
1\ \operatorname{mod}4$}\\ \cmidrule(lr){1-4}
$\mathrm{I}_{3}^{\ast}$ & \multicolumn{1}{l}{$v_{2}\!\left(  b\right)  =1,\ v_{2}%
\!\left(  a\right)  >1,\ d\equiv3\ \operatorname{mod}8$} & $5$ & \multicolumn{1}{l}{$2$ if
$a\equiv0,4\ \operatorname{mod}16,\hspace{.8em} b^{2}d\equiv44\ \operatorname{mod}64$}\\
& \multicolumn{1}{l}{} &  & \multicolumn{1}{l}{\hspace{0.3em} or $a\equiv
8,12\ \operatorname{mod}16,\ b^{2}d\equiv12\ \operatorname{mod}64$}\\
& \multicolumn{1}{l}{} &  & \multicolumn{1}{l}{$4$ if $a\equiv
0,4\ \operatorname{mod}16,\hspace{.8em} b^{2}d\equiv12\ \operatorname{mod}64$}\\
& \multicolumn{1}{l}{} &  & \multicolumn{1}{l}{\hspace{0.3em} or $a\equiv
8,12\ \operatorname{mod}16,\ b^{2}d\equiv44\ \operatorname{mod}64$}\\ \cmidrule(lr){2-4}
& \multicolumn{1}{l}{$v_{2}\!\left(  b\right)  =1,\ v_{2}\!\left(  a\right)
>1,\ d\equiv7\ \operatorname{mod}8$} & $5$ & \multicolumn{1}{l}{ $2$ if
$a\equiv0,12\ \operatorname{mod}16,\ b^{2}d\equiv28\ \operatorname{mod}64$}\\
& \multicolumn{1}{l}{} &  & \multicolumn{1}{l}{\hspace{0.3em} or $a\equiv
4,8\ \operatorname{mod}16,\hspace{.8em} b^{2}d\equiv60\ \operatorname{mod}64$}\\
& \multicolumn{1}{l}{} &  & \multicolumn{1}{l}{$4$ if $a\equiv
0,12\ \operatorname{mod}16,\ b^{2}d\equiv60\ \operatorname{mod}64$}\\
& \multicolumn{1}{l}{} &  & \multicolumn{1}{l}{\hspace{0.3em} or $ a\equiv
4,8\ \operatorname{mod}16,\hspace{.8em} b^{2}d\equiv28\ \operatorname{mod}64$}\\ \cmidrule(lr){2-4}
& \multicolumn{1}{l}{$v_{2}\!\left(  b\right)  =2,\ v_{2}\!\left(  d\right)
=1,\ a\equiv1\ \operatorname{mod}4$} & $4$ & \multicolumn{1}{l}{$2$ if
$d\equiv2\ \operatorname{mod}8$ and $a\equiv1\ \operatorname{mod}8$}\\
& \multicolumn{1}{l}{} &  & \multicolumn{1}{l}{\hspace{0.3em} or $ d\equiv
6\ \operatorname{mod}8\ \text{and\ }a\equiv5\ \operatorname{mod}8$}\\
& \multicolumn{1}{l}{} &  & \multicolumn{1}{l}{$4$ if $d\equiv
2\ \operatorname{mod}8$ and $a\equiv5\ \operatorname{mod}8$}\\
& \multicolumn{1}{l}{} &  & \multicolumn{1}{l}{\hspace{0.3em} or $d\equiv
6\ \operatorname{mod}8\ \text{and\ }a\equiv1\ \operatorname{mod}8$}\\ \cmidrule(lr){1-4}
$\mathrm{I}_{4}^{\ast}$ & \multicolumn{1}{l}{$v_{2}\!\left(  b\right)  =2,\ v_{2}%
\!\left(  a\right)  =1,\ d\equiv1\ \operatorname{mod}4$} & $6$ &
\multicolumn{1}{c}{$2$ if $d\equiv5\ \operatorname{mod}8$}\\
& \multicolumn{1}{l}{} &  & \multicolumn{1}{c}{$4$ if $d\equiv
1\ \operatorname{mod}8$}\\ \cmidrule(lr){2-4}
& \multicolumn{1}{l}{$v_{2}\!\left(  b\right)  =2,\ v_{2}\!\left(  a\right)
=1,\ d\equiv3\ \operatorname{mod}4$} & $6$ & \multicolumn{1}{l}{$2$ if
$d\equiv3\ \operatorname{mod}8\text{ and }a\equiv2\ \operatorname{mod}8$}\\
& \multicolumn{1}{l}{} &  & \multicolumn{1}{l}{\hspace{0.3em} or $ d\equiv
7\ \operatorname{mod}8\text{ and }a\equiv6\ \operatorname{mod}8$}\\
& \multicolumn{1}{l}{} &  & \multicolumn{1}{l}{$4$ if $d\equiv
3\ \operatorname{mod}8\text{ and }a\equiv6\ \operatorname{mod}8$}\\
& \multicolumn{1}{l}{} &  & \multicolumn{1}{l}{\hspace{0.3em} or $ d\equiv
7\ \operatorname{mod}8\text{ and }a\equiv2\ \operatorname{mod}8$}\\ \cmidrule(lr){1-4}
$\mathrm{I}_{5}^{\ast}$ & \multicolumn{1}{l}{$v_{2}\!\left(  b\right)  =2,\ ad\equiv
4\ \operatorname{mod}16$} & 6 & \multicolumn{1}{c}{$2$ if $ad\not\equiv
4\ \operatorname{mod}32$}\\
& \multicolumn{1}{l}{} &  & \multicolumn{1}{c}{$4$ if $ad\equiv
4\ \operatorname{mod}32$}\\ \cmidrule(lr){2-4}
& \multicolumn{1}{l}{$v_{2}\!\left(  b\right)  =2,\ ad\equiv
12\ \operatorname{mod}16$} & 6 & \multicolumn{1}{l}{$2$ if $ad\equiv
12\ \operatorname{mod}32\text{ and }a\equiv2\ \operatorname{mod}8$%
}\\
& \multicolumn{1}{l}{} &  & \multicolumn{1}{l}{\hspace{0.3em} or $ ad\equiv
28\ \operatorname{mod}32\text{ and }a\equiv6\ \operatorname{mod}8$}\\
& \multicolumn{1}{l}{} &  & \multicolumn{1}{l}{$4$ if $ad\equiv
12\ \operatorname{mod}32\text{ and }a\equiv6\ \operatorname{mod}8$%
}\\
& \multicolumn{1}{l}{} &  & \multicolumn{1}{l}{\hspace{0.3em} or $ ad\equiv
28\ \operatorname{mod}32\text{ and }a\equiv2\ \operatorname{mod}8$}\\ \cmidrule(lr){1-4}
$\mathrm{I}_{n}^{\ast}$ & \multicolumn{1}{l}{$v_{2}\!\left(  b\right)  \geq
3,\ a \equiv 1\ \operatorname{mod}4,\ v_{2}\!\left(  d\right)  =0,$} & $4$
& \multicolumn{1}{c}{$2$ $\text{if }d\equiv3\ \operatorname{mod}4$}\\
& \multicolumn{1}{l}{$n=2v_{2}\!\left(  b\right)  -2$} &  &
\multicolumn{1}{c}{$4$ $\text{if }d\equiv1\ \operatorname{mod}4$}\\ \cmidrule(lr){2-4}
& \multicolumn{1}{l}{$v_{2}\!\left(  b\right)  \geq3,\ a \equiv
1\ \operatorname{mod}4,\ v_{2}\!\left(  d\right)  =1,$} & $4$ &
\multicolumn{1}{l}{$2$ if $d\equiv2\ \operatorname{mod}8\ \text{and\ }%
a\equiv5\ \operatorname{mod}8$}\\
& \multicolumn{1}{l}{$n=2v_{2}\!\left(
b\right)  -1$} &  & \multicolumn{1}{l}{\hspace{0.3em} or $d\equiv6\ \operatorname{mod}%
8\ \text{and\ }a\equiv1\ \operatorname{mod}8$}\\
& \multicolumn{1}{l}{} &  & \multicolumn{1}{l}{$4$ if $d\equiv
2\ \operatorname{mod}8\ \text{and\ }a\equiv1\ \operatorname{mod}8$%
}\\
& \multicolumn{1}{l}{} &  & \multicolumn{1}{l}{\hspace{0.3em} or $d\equiv6\ \operatorname{mod}%
8\ \text{and\ }a\equiv5\ \operatorname{mod}8$}\\ \cmidrule(lr){2-4}
& \multicolumn{1}{l}{$v_{2}\!\left(  b\right)  \geq3,\ a \equiv
2\ \operatorname{mod}4, \ v_{2}\!\left(  d\right)  =0,$} & $6$ &
\multicolumn{1}{l}{$2$ if $a\equiv2\ \operatorname{mod}8\ \text{and\ }%
d\equiv5,7\ \operatorname{mod}8$}\\
& \multicolumn{1}{l}{$n=2v_{2}\!\left(
b\right)  $} &  & \multicolumn{1}{l}{\hspace{0.3em} or $a\equiv6\ \operatorname{mod}%
8\ \text{and\ }d\equiv3,5\ \operatorname{mod}8$}\\
& \multicolumn{1}{l}{} &  & \multicolumn{1}{l}{$4$ if $a\equiv
2\ \operatorname{mod}8\ \text{and\ }d\equiv1,3\ \operatorname{mod}8$ }\\
& \multicolumn{1}{l}{} &  & \multicolumn{1}{l}{\hspace{0.3em} or $a\equiv6\ \operatorname{mod}%
8\ \text{and\ }d\equiv1,7\ \operatorname{mod}8$}\\ \cmidrule(lr){2-4}
& \multicolumn{1}{l}{$v_{2}\!\left(  b\right)  \geq3,\ a \equiv
2\ \operatorname{mod}4,\ v_{2}\!\left(  d\right)  =1,$} & $6$ &
\multicolumn{1}{l}{$2$ if $ad\equiv12\ \operatorname{mod}32,\ a\equiv
6\ \operatorname{mod}8$}\\
& \multicolumn{1}{l}{$n=2v_{2}\!\left(  b\right)  +1$} &  &
\multicolumn{1}{l}{\hspace{0.3em} or $ad\equiv28\ \operatorname{mod}32,\ a\equiv
2\ \operatorname{mod}8$}\\
& \multicolumn{1}{l}{} &  & \multicolumn{1}{l}{\hspace{0.3em} or $ ad\equiv
20\ \operatorname{mod}32$}\\
& \multicolumn{1}{l}{} &  & \multicolumn{1}{l}{$4$ if $ad\equiv
12\ \operatorname{mod}32,\ a\equiv2\ \operatorname{mod}8$}\\
& \multicolumn{1}{l}{} &  & \multicolumn{1}{l}{\hspace{0.3em} or $ad\equiv
28\ \operatorname{mod}32,\ a\equiv6\ \operatorname{mod}8$}\\
& \multicolumn{1}{l}{} &  & \multicolumn{1}{l}{\hspace{0.3em} or $ad\equiv
4\ \operatorname{mod}32$}\\ \cmidrule(lr){2-4}
& \multicolumn{1}{l}{$v_{2}\!\left(  b\right)  =0,\ n=2v_{2}\!\left(
b^{2}d-a^{2}\right)  -4\geq4$} & $6$ & $4$\\ \cmidrule(lr){2-4}
& \multicolumn{1}{l}{$a\equiv6\ \operatorname{mod}8,\ n=2v_{2}\!\left(
b^{2}d-a^{2}\right)  -12\geq4$} & $4$ & $4$\\ \cmidrule(lr){1-4}
$\mathrm{II}^{\ast}$ & \multicolumn{1}{l}{$v_{2}\!\left(  b\right)  =2,\ v_{2}\!\left(
d\right)  =1,\ a\equiv3\ \operatorname{mod}4$} & $3$ & $1$\\ \cmidrule(lr){1-4}
$\mathrm{III}^{\ast}$ & \multicolumn{1}{l}{$v_{2}\!\left(  b^{2}d-a^{2}\right)
=3,\ v_{2}\!\left(  a\right)  =v_{2}\!\left(  b\right)  =1$} & $7$ & $2$\\ \cmidrule(lr){2-4}
& \multicolumn{1}{l}{$v_{2}\!\left(  b\right)  =v_{2}\!\left(  d\right)
=1,\ v_{2}\!\left(  a\right)  >1$} & $7$ & $2$\\ \cmidrule(lr){2-4}
& \multicolumn{1}{l}{$v_{2}\!\left(  b\right)  =2,\ v_{2}\!\left(  d\right)
=0,\ a\equiv3\ \operatorname{mod}4$} & $3$ & $2$\\ \cmidrule(lr){2-4}
& \multicolumn{1}{l}{$v_{2}\!\left(  b^{2}d-a^{2}\right)  =7,\ a\equiv
2\ \operatorname{mod}8$} & $3$ & $2$\\ \cmidrule(lr){1-4}
$\mathrm{IV}^{\ast}$ & \multicolumn{1}{l}{$v_{2}\!\left(  b\right)  =1,\ a,d\equiv
3\ \operatorname{mod}4$} & $2$ & \multicolumn{1}{c}{$1$ if $ad\equiv
5\ \operatorname{mod}8$}\\
& \multicolumn{1}{l}{} &  & \multicolumn{1}{c}{\ $3$ if $ad\equiv
1\ \operatorname{mod}8$}%
\label{TableforC2}
\end{longtable}
\endgroup}
\begin{proof}
By Theorem~\ref{TisC2Addred2Thm7}, $E_{C_2}$ has additive reduction at $2$ if
and only if the parameters of $E_{C_2}$ satisfy one of the conditions in Table~\ref{TableforC2}. By Lemma~\ref{Lemma for minimal disc}, we get a minimal model for $E_{C_2}$ over $\mathbb{Q}_{2}$ from which we compute its invariants $c_{4},$ $c_{6},$ and
$\Delta$ as given below
\[
c_{4}=u^{-4}\cdot16\left(  3b^{2}d+a^{2}\right),  \qquad c_{6}=-u^{-6}%
\cdot64\left(  9b^{2}d-a^{2}\right),  \qquad\Delta=u^{-12}\cdot64b^{2}d\left(
b^{2}d-a^{2}\right)  ^{2},%
\]
with $u\in\left\{  1,2\right\}$. 
In particular, $u=2$ if and only if $v_{2}\!\left(  b\right)  =1,v_{2}\!\left(
b^{2}d-a^{2}\right)  \geq4$.
Let $F_{C_2,i}$ be the elliptic curve attained from
$E_{C_2}$ via the admissible change of variables $x\longmapsto u_{i}^{2}x+r_{i}$
and $y\longmapsto u_{i}^{3}y+u_{i}^{2}s_{i}x+w_{i}$ where $u_{i},r_{i},s_{i},$
and $w_{i}$ are as given in Table~\ref{ta:WeierTisC2} in Appendix~\ref{AppendixTables}. Note that each
$F_{C_2,i}$ is a minimal model for $E_{C_2}$ under the assumptions on $a,b,d$.

\textbf{Case 1.} Suppose the parameters of $E_{C_2}$ satisfy one of the
conditions appearing in Table~\ref{TableforC2} for the Kodaira-N\'{e}ron types II, $\mathrm{III}$, or IV. Then Table~\ref{ta:PapTableIV} implies that the theorem holds for the
cases below:
\[
\left(  v_{2}\!\left(  c_{4}\right)  ,v_{2}\!\left(  c_{6}\right)
,v_{2}\!\left(  \Delta\right)  \right)  =\left\{
\begin{array}
[c]{ll}%
\left(  4,6,7\right)  & \text{if }v_{2}\!\left(  ab\right)  =v_{2}\!\left(
b^{2}d-a^{2}\right)  =0, v_{2}\!\left(  d\right)  =1,\\
\left(  5,\geq8,9\right)  & \text{if }v_{2}\!\left(  b\right)  =0, v_{2}%
\!\left(  b^{2}d-a^{2}\right)  =1, v_{2}\!\left(  a\right)  >0, v_{2}\!\left(
d\right)  =1,\\
\left(  5,7,8\right)  & \text{if }v_{2}\!\left(  ab\right)  =0,v_{2}%
\!\left(  b^{2}d-a^{2}\right)  =1, d\equiv3\ \operatorname{mod}4.
\end{array}
\right.
\]

For each of the remaining cases we have by (\ref{valIItoIV}) that the Weierstrass model $F_{C_2,i}$ satisfies the first three steps of Tate's Algorithm.

\textbf{Subcase 1a.} Suppose $v_{2}\!\left(  b^{2}d-a^{2}\right)  =4$
with$\ v_{2}\!\left(  a\right)  =v_{2}\!\left(  b\right)  =1$. Then the
Weierstrass coefficient $a_{6}$ of $F_{C_2,1}$ is $a_{6}=-16^{-1}\left(
b^{2}d-a^{2}-8a\right)$. If $b^{2}d-a^{2}-8a\equiv32\ \operatorname{mod}64$,
then $v_{2}\!\left(  a_{6}\right)  =1$, which implies that $E_{C_2}$ has
Kodaira-N\'{e}ron type II with $c_{2}=1$. Moreover, $f_{2}=4$ since
$v_{2}\!\left(  \Delta\right)  =4$. Now suppose $b^{2}d-a^{2}\equiv
8a\ \operatorname{mod}64$ so that $v_{2}\!\left(  a_{6}\right)  \geq2$. Note
that this assumption is equivalent to $b^{2}d-a^{2}\equiv
48\ \operatorname{mod}64$ with $a\equiv6\ \operatorname{mod}8$ or
$b^{2}d-a^{2}\equiv16\ \operatorname{mod}64$ with $a\equiv
2\ \operatorname{mod}8$. If $b^{2}d-a^{2}\equiv48\ \operatorname{mod}64$
with$\ a\equiv6\ \operatorname{mod}8$, then 
$v_{2}\!\left(c_{4}\right)=5  ,v_{2}\!\left(  c_{6}\right)=5  ,\text{ and }v_{2}\!\left(  \Delta\right)=4 $
and $E_{C_2}$ has Kodaira-N\'{e}ron type II or $\mathrm{III}$  by Table~\ref{ta:PapTableIV}. In this case, $v_{2}\!\left(  a_{6}\right)  \geq2$ by (\ref{valIItoIV}) and thus $E_{C_2}$ has Kodaira-N\'{e}ron
type $\mathrm{III}$ with $\left(  c_{2},f_{2}\right)  =\left(
2,3\right)$ by Tate's Algorithm. Lastly, if $b^{2}d-a^{2}\equiv16\ \operatorname{mod}64$
with$\ a\equiv2\ \operatorname{mod}8$, then 
$ v_{2}\!\left(c_{4}\right) \geq 6  ,v_{2}\!\left(  c_{6}\right) =5 ,\text{ and }v_{2}\!\left(  \Delta\right) =4$ and $E_{C_2}$ has Kodaira-N\'{e}ron type II or IV by Table~\ref{ta:PapTableIV}. Since $v_{2}\!\left(  a_{6}\right)
\geq2$, Tate's Algorithm implies that $E_{C_2}$ has Kodaira-N\'{e}ron type IV with $f_{2}=2$. For $c_{2}$, we observe that $t^{2}%
+a_{3,1}t-a_{6,2}\equiv t^{2}+t+a_{6,2}\ \operatorname{mod}2$. Thus $c_{2}$
depends on the parity of $a_{6,2}$. Since $a_{6}=\frac{a}{2}-\frac
{b^{2}d-a^{2}}{16}$, we have that $a_{6}\equiv0\ \operatorname{mod}8$ if and
only if $8a-b^{2}d+a^{2}\equiv0\ \operatorname{mod}128$, which implies
the claimed $c_{2}$.

\begin{equation}{
\scalebox{0.95}{
\renewcommand{\arraystretch}{0.5}
\renewcommand{\arraycolsep}{.2cm}%
$\begin{array}
[c]{ccccc}%
\toprule F_{C_2,i} & \text{Conditions on }a,b,d & v_{2}\!\left(  a_{3}\right)  &
v_{2}\!\left(  a_{4}\right)  & v_{2}\!\left(  a_{6}\right) \\
\toprule F_{C_2,1} & \multicolumn{1}{l}{v_{2}\!\left(  b^{2}d-a^{2}\right)
=4,\ v_{2}\!\left(  a\right)  =v_{2}\!\left(  b\right)  =1,} & 1 & \geq1 & 1\\
& \multicolumn{1}{l}{b^{2}d-a^{2}-8a\equiv32\ \operatorname{mod}64} &  &  & \\
\cmidrule{2-5} & \multicolumn{1}{l}{b^{2}d-a^{2}\equiv48\ \operatorname{mod}%
64,\ a\equiv6\ \operatorname{mod}8} & 1 & \geq2 & \geq2\\
\cmidrule{2-5} & \multicolumn{1}{l}{b^{2}d-a^{2}\equiv16\ \operatorname{mod}%
64,\ a\equiv2\ \operatorname{mod}8} & 1 & \geq 1 & \geq2\\\midrule
F_{C_2,2} & \multicolumn{1}{l}{v_{2}\!\left(  b\right)  =v_{2}\!\left(
b^{2}d-a^{2}\right)  =0,\ v_{2}\!\left(  a\right)  >0,\ d\equiv
3\ \operatorname{mod}4} & \geq2 & \geq1 & 1\\
\cmidrule{2-5} & \multicolumn{1}{l}{v_{2}\!\left(  b\right)  =v_{2}\!\left(
b^{2}d-a^{2}\right)  =0,\ v_{2}\!\left(  a\right)  >0,\ d\equiv
1\ \operatorname{mod}4} & \geq2 & \geq1 & \geq2\\\midrule
F_{C_2,3} & \multicolumn{1}{l}{v_{2}\!\left(  b^{2}d-a^{2}\right)
=5,\ v_{2}\!\left(  a\right)  =v_{2}\!\left(  b\right)  =1} & 2 & 1 & 2\\
\bottomrule &  &  &  &
\end{array}$}
\label{valIItoIV}}
\end{equation}

\textbf{Subcase 1b.} Suppose $v_{2}\!\left(  bd\right)  =v_{2}\!\left(
b^{2}d-a^{2}\right)  =0$ with $v_{2}\!\left(  a\right)  >0$. Then we observe that 
$v_{2}\!\left(  c_{4}\right) =4  ,v_{2}\!\left(  c_{6}\right) \geq 7  ,v_{2}\!\left(\Delta\right)=6$. By Table~\ref{ta:PapTableIV}, $E_{C_2}$ has Kodaira-N\'{e}ron type II or III.
In particular, $E_{C_2}$ has Kodaira-N\'{e}ron type II if and only if the
Weierstrass coefficient $a_{6}$ of $F_{C_2,2}$ satisfies $v_{2}\!\left(
a_{6}\right)  =1$. It follows by (\ref{valIItoIV}) that $E_{C_2}$ has Kodaira-N\'{e}ron
type II (resp.\ $\mathrm{III}$) with $\left(  c_{2},f_{2}\right)  =\left(  1,6\right)  $
(resp.\ $\left(  c_{2},f_{2}\right)  =\left(  2,5\right)  $) if $d\equiv
3\ \operatorname{mod}4$ (resp.\ $d\equiv1\ \operatorname{mod}4$).

\textbf{Subcase 1c.} Suppose $v_{2}\!\left(  b^{2}d-a^{2}\right)  =5$
with$\ v_{2}\!\left(  a\right)  =v_{2}\!\left(  b\right)  =1$. Then it follows that 
$v_{2}\!\left(  c_{4}\right) =4 ,v_{2}\!\left(  c_{6}\right)\geq 7  , \text{ and }v_{2}\!\left(\Delta\right)=6$. By Table
\ref{ta:PapTableIV}, $E_{C_2}$ has the Kodaira-N\'{e}ron type II or $\mathrm{III}$.
Hence $E_{C_2}$ has Kodaira-N\'{e}ron type $\mathrm{III}$ with $\left(  c_{2},f_{2}\right)  =\left(  2,5\right)  $ by (\ref{valIItoIV}) for $F_{C_2,3}$. This completes the proof for Kodaira-N\'{e}ron types II, $\mathrm{III}$, and $\mathrm{IV}$.

\begin{equation}{
\scalebox{0.95}{
\renewcommand{\arraystretch}{0.9}
\renewcommand{\arraycolsep}{.08cm}%
$\begin{array}
[c]{cccccccc}%
\toprule F_{C_2,i} & \text{Conditions on }a,b,d & v_{2}\!\left(  a_{1}\right)  &
v_{2}\!\left(  a_{2}\right)  & v_{2}\!\left(  a_{3}\right)  & v_{2}\!\left(
a_{4}\right)  & v_{2}\!\left(  a_{6}\right)  & v_{2}\!\left(  \Delta\right) \\
\toprule F_{C_2,4} & \multicolumn{1}{l}{v_{2}\!\left(  b\right)  =1,\ a\equiv
3\ \operatorname{mod}4,\ d\equiv1\ \operatorname{mod}4} & 1 & \geq2 & 2 &
3 & 3 & 8\\
\cmidrule{2-8} & \multicolumn{1}{l}{v_{2}\!\left(  b\right)  =1,\ a\equiv
3\ \operatorname{mod}4,\ d\equiv2\ \operatorname{mod}4} & 1 & \geq2 & 3 &
4 & 3 & 9\\
\cmidrule{2-8} & \multicolumn{1}{l}{v_{2}\!\left(  b\right)  =1,\ a\equiv
1\ \operatorname{mod}4,\ d\equiv2\ \operatorname{mod}4} & 1 & 1 & 3 & 4 &
3 & 9\\
\cmidrule{2-8} & \multicolumn{1}{l}{v_{2}\!\left(  b\right)  =1,\ a\equiv
1\ \operatorname{mod}4,\ d\equiv3\ \operatorname{mod}4} & 1 & 1 & 2 & 3 &
3 & 8\\\midrule
F_{C_2,5} & \multicolumn{1}{l}{v_{2}\!\left(  b\right)  =0,\ v_{2}\!\left(
b^{2}d-a^{2}\right)  =2} & 2 & 1 & 6 & 2 & 8 & 10\\\midrule
F_{C_2,6} & \multicolumn{1}{l}{v_{2}\!\left(  b^{2}d-a^{2}\right)
=6,\ a\equiv6\ \operatorname{mod}8} & 1 & \geq2 & 2 & \geq3 & 3 & 8\\
\bottomrule &  &  &  &  &  &  &
\end{array}$}
\label{Tableforc2valforI0}%
}\end{equation}

\textbf{Case 2.} Suppose $a,b,d$ satisfy one of
the conditions in Table~\ref{TableforC2} for the Kodaira-N\'{e}ron type
$\mathrm{I}_{0}^{\ast}$. Next, we consider the curves $F_{C_2,i}$ whose Weierstrass coefficients satisfy the valuations given in \eqref{Tableforc2valforI0} under the listed assumptions on $a,b,d$. Thus the Weierstrass coefficients $a_{j}$ of each of the $F_{C_2,i}$ satisfy the first six steps of Tate's Algorithm.
For these $F_{C_2,i}$, we consider $P(t)=t^{3}%
+a_{2,1}t^{2}+a_{4,2}t+a_{6,3}$. Then  

\vspace{-0.6em}
\begin{equation*}
 P\!\left(  t\right)  \equiv   \begin{cases}
    t^{3}+t^{2}+1\ \operatorname{mod}2 & \quad\text{if }v_{2}\!\left(  b\right)
=1,\ a\equiv1\ \operatorname{mod}4,\ d\equiv2,3\ \operatorname{mod}4,\\
t\left(  t^{2}+t+1\right)  \ \operatorname{mod}2 & \quad\text{if }%
v_{2}\!\left(  b\right)  =0,\ v_{2}\!\left(  b^{2}d-a^{2}\right)  =2,\\
(t+1)(t^{2}+t+1)\ \operatorname{mod}\text{ }2 & \quad\text{if }v_{2}\!\left(
b\right)  =1,\ a\equiv3\ \operatorname{mod}4,\ d\equiv1,2\ \operatorname{mod}%
4,\text{ or}\\
& \qquad v_{2}\!\left(  b^{2}d-a^{2}\right)  =6,\ a\equiv6\ \operatorname{mod}%
8,
\end{cases}
\end{equation*}
\noindent has three distinct roots in $\overline{\mathbb{F}}_{2}$. So, the Kodaira-N\'{e}ron type is
$\mathrm{I}_{0}^{\ast}$ in all these cases. By counting the number of
roots of $P(t)$ in $\mathbb{F}_{2}$, we get the claimed $c_{2}$ as given in
Table~\ref{TableforC2}. Lastly, $f_{2}=v_{2}\!\left(  \Delta\right)  -4$ by Tate's Algorithm, which concludes the proof for this case.

\begin{equation}{
\scalebox{0.92}{
\renewcommand{\arraystretch}{0.4}
\renewcommand{\arraycolsep}{.08cm}%
$\begin{array}
[c]{ccccccccc}%
\toprule n & F_{C_2,i} & \text{Conditions on }a,b,d & v_{2}\!\left(
a_{1}\right)  & v_{2}\!\left(  a_{2}\right)  & v_{2}\!\left(  a_{3}\right)  &
v_{2}\!\left(  a_{4}\right)  & v_{2}\!\left(  a_{6}\right)  & v_{2}\!\left(
\Delta\right) \\
\toprule1 & F_{C_2,4} & \multicolumn{1}{l}{v_{2}\!\left(  b\right)
=1,\ a,d\equiv1\ \operatorname{mod}4} & 1 & 1 & 2 & \geq3 & \geq4 & 8\\
\cmidrule{2-9} & F_{C_2,6} & \multicolumn{1}{l}{v_{2}\!\left(  b^{2}%
d-a^{2}\right)  =6,\ a\equiv2\ \operatorname{mod}8} & 1 & 1 & 2 & \geq3 &
\geq4 & 8\\\midrule
2 & F_{C_2,6} & \multicolumn{1}{l}{v_{2}\!\left(  b^{2}d-a^{2}\right)
=7,\ a\equiv6\ \operatorname{mod}8} & 1 & 1 & 3 & 3 & \geq6 & 10\\
\cmidrule{2-9} & F_{C_2,5} & \multicolumn{1}{l}{v_{2}\!\left(  b\right)
=0,\ v_{2}\!\left(  b^{2}d-a^{3}\right)  =3} & 2 & 1 & 7 & 3 & 10 & 12\\
\cmidrule{2-9} & F_{C_2,7} & \multicolumn{1}{l}{v_{2}\!\left(  b^{2}%
d-a^{2}\right)  =2,\ v_{2}\!\left(  a\right)  =v_{2}\!\left(  b\right)  =1} &
3 & 1 & 3 & 3 & \geq5 & 13\\
\cmidrule{2-9} & F_{C_2,8} & \multicolumn{1}{l}{v_{2}\!\left(  b\right)
=1,\ v_{2}\!\left(  a\right)  >1,\ d\equiv1\ \operatorname{mod}4} & 2 & 1 &
5 & 3 & \geq5 & 12\\
\cmidrule{2-9} & F_{C_2,9} & \multicolumn{1}{l}{v_{2}\!\left(  b\right)
=2,\ v_{2}\!\left(  d\right)  =0,\ a\equiv1\ \operatorname{mod}4} & 1 & 1 &
3 & 3 & \geq5 & 10\\\midrule
3 & F_{C_2,10} & \multicolumn{1}{l}{v_{2}\!\left(  b\right)  =1,\ v_{2}\!\left(
a\right)  >1,\ d\equiv3\ \operatorname{mod}8} & 2 & 1 & 3 & \geq4 & \geq6 &
12\\
\cmidrule{2-9} & F_{C_2,11} & \multicolumn{1}{l}{v_{2}\!\left(  b\right)
=1,\ v_{2}\!\left(  a\right)  >1,\ d\equiv7\ \operatorname{mod}8} & 2 & 1 &
3 & \geq4 & \geq6 & 12\\
\cmidrule{2-9} & F_{C_2,12} & \multicolumn{1}{l}{v_{2}\!\left(  b\right)
=2,\ v_{2}\!\left(  d\right)  =1,\ a\equiv1\ \operatorname{mod}4} & 1 & 1 &
3 & \geq3 & \geq6 & 11\\\midrule
4 & F_{C_2,13} & \multicolumn{1}{l}{v_{2}\!\left(  b\right)  =2,\ v_{2}\!\left(
a\right)  =1,\ d\equiv1\ \operatorname{mod}4} & 2 & 1 & 5 & 4 & \geq7 & 14\\
\cmidrule{2-9} & F_{C_2,14} & \multicolumn{1}{l}{v_{2}\!\left(  b\right)
=2,\ v_{2}\!\left(  a\right)  =1,\ d\equiv3\ \operatorname{mod}4} & 2 & 1 &
4 & 4 & \geq7 & 14\\\midrule
5 & F_{C_2,15} & \multicolumn{1}{l}{v_{2}\!\left(  b\right)  =2,\ ad\equiv
4\ \operatorname{mod}16} & 3 & 1 & 4 & 5 & \geq8 & 15\\
\cmidrule{2-9} & F_{C_2,16} & \multicolumn{1}{l}{v_{2}\!\left(  b\right)
=2,\ ad\equiv12\ \operatorname{mod}16} & 3 & 1 & 4 & \geq6 & \geq8 &
15\\\midrule
n & F_{C_2,17} & \multicolumn{1}{l}{n=2v_{2}\!\left(  b\right)  -2\geq
4,\ a\equiv1\ \operatorname{mod}4,} & 1 & 1 & >\frac{n+3}{2} & \frac{n+4}{2} &
\geq n+3 & n+8\\
&  & \multicolumn{1}{l}{v_{2}\!\left(  d\right)  =0} &  &  &  &  &  & \\
\cmidrule{2-9} & F_{C_2,18} & \multicolumn{1}{l}{n=2v_{2}\!\left(  b\right)
-1\geq5,\ a\equiv1\ \operatorname{mod}4,} & 1 & 1 & \frac{n+3}{2} &
>\frac{n+4}{2} & \geq n+3 & n+8\\
&  & \multicolumn{1}{l}{v_{2}\!\left(  d\right)  =1} &  &  &  &  &  & \\
\cmidrule{2-9} & F_{C_2,19} & \multicolumn{1}{l}{n=2v_{2}\!\left(  b\right)
\geq6,\ a\equiv2\ \operatorname{mod}4,} & 2 & 1 & >\frac{n+3}{2} & \frac{n+4}{2}
& \geq n+3 & n+10\\
&  & \multicolumn{1}{l}{v_{2}\!\left(  d\right)  =0} &  &  &  &  &  & \\
\cmidrule{2-9} & F_{C_2,20} & \multicolumn{1}{l}{n=2v_{2}\!\left(  b\right)
+1\geq7,\ a\equiv2\ \operatorname{mod}4,} & 2 & 1 & \frac{n+3}{2} & >\frac
{n+4}{2} & \geq n+3 & n+10\\
&  & \multicolumn{1}{l}{ad\equiv4\ \operatorname{mod}16} &  &  &  &  &  & \\
\cmidrule{2-9} & F_{C_2,21} & \multicolumn{1}{l}{n=2v_{2}\!\left(  b\right)
+1\geq7,\ a\equiv2\ \operatorname{mod}4,} & 2 & 1 & \frac{n+3}{2} & >\frac
{n+4}{2} & \geq n+3 & n+10\\
&  & \multicolumn{1}{l}{ad\equiv12\ \operatorname{mod}16} &  &  &  &  &  & \\
\cmidrule{2-9} & F_{C_2,5} & \multicolumn{1}{l}{n=2v_{2}\!\left(  b^{2}%
d-a^{2}\right)  \geq4,\ v_{2}\!\left(  b\right)  =0} & 2 & 1 & >\frac{n+3}{2} &
\frac{n+4}{2} & >n+3 & n+10\\
\cmidrule{2-9} & F_{C_2,22} & \multicolumn{1}{l}{n=2v_{2}\!\left(  b^{2}%
d-a^{2}\right)  -12\geq4,} & 1 & 1 & >\frac
{n+3}{2} & \frac{n+4}{2} & >n+3 & n+8\\
& & \multicolumn{1}{l}{ a\equiv6\ \operatorname{mod}8} &  & &  &  &  & \\
\bottomrule &  &  &  &  &  &  &  &
\end{array}$}
\ \ \label{Tableforc2valforInstar}}
\end{equation}

\textbf{Case 3.} Suppose that $a,b,d$ satisfy one of
the conditions in Table~\ref{TableforC2} for the Kodaira-N\'{e}ron type
$\mathrm{I}_{n}^{\ast}$ where $n>0$. Now, we consider the Weierstrass models $F_{C_2,i}$ listed in \eqref{Tableforc2valforInstar} along with the $2$-adic valuations of its
Weierstrass coefficients $a_{j}$ and the minimal
discriminant $\Delta$.
It follows from (\ref{Tableforc2valforInstar}) and Lemma~\ref{LemmaIn} that the
Kodaira-N\'{e}ron type and $f_{2}$ are as claimed for each of these cases. Indeed,%
\vspace{-.5em}
\begin{equation}
\left.
\begin{array}
[c]{rl}%
t^{2}+a_{3,\frac{n+3}{2}}t+a_{6,n+3} & \text{if }n\text{ is odd}\\
a_{2,1}t^{2}+a_{4,\frac{n+4}{2}}t+a_{6,n+3} & \text{if }n\text{ is even}%
\end{array}
\right\}  \equiv t^{2}+t+a_{6,n+3}\ \operatorname{mod}2 \label{TamaC2Infixed}%
\end{equation}
has distinct roots in $\overline{\mathbb{F}}_{2}$. Then from  (\ref{TamaC2Infixed}), we note that $c_{2}=4$ if $a_{6,n+3}$ is even. Otherwise, $c_{2}=2$. Consequently, if (i)  $v_{2}\!\left(  b^{2}d-a^{2}\right)  =7,\ a\equiv
6\ \operatorname{mod}8$, (ii) $v_{2}\!\left(  b\right)  =0,\ v_{2}\!\left(
b^{2}d-a^{3}\right)  =3$, (iii) $v_{2}\!\left(  b\right)  =0,\ v_{2}\!\left(  b^{2}%
d-a^{2}\right)  \geq4$, or (iv) $a\equiv6\ \operatorname{mod}8,\ v_{2}\!\left(
b^{2}d-a^{2}\right)  \geq8$, then $c_{2}=4$ since each respective $a_{6,n+3}$ is even by \eqref{Tableforc2valforInstar}. We consider the remaining cases separately using (\ref{Instara6}), which lists the
Weierstrass coefficient $a_{6}$ of $F_{C_2,i}$ under the remaining assumptions.

Since $a_{6,n+3}$ is an integer, we have that $a_{6,n+3}$ is even if and
only if $a_{6}\equiv0\ \operatorname{mod}2^{n+4}$. Below we show the proof of the local Tamagawa numbers for some of the cases and remark that the same method yields the $c_{2}$'s for the cases not considered below.

Suppose $v_{2}\!\left(  b^{2}d-a^{2}\right)  =6,\ a\equiv2\ \operatorname{mod}%
4$. Then $E_{C_2}$ has Kodaira-N\'{e}ron type $\mathrm{I}_{1}^{\ast}$ under these
conditions. To compute $c_{2}$ in this case, we consider $a_{6}$ of the curve $F_{C_2,6}$ modulo $32$ as in (\ref{Instara6}). Since $2^{-12}\left(b^{2}d-a^{2}\right)  ^{2}$ is an odd integer, it suffices to consider $2^{-3}\left(
b^{2}d-a^{2}+16a-96\right)  \ \operatorname{mod}32$. In particular,
$a_{6,n+3}$ is even if and only if $b^{2}d-a^{2}+16a\equiv
96\ \operatorname{mod}256$, which gives the claimed~$c_2$.

Suppose $v_{2}\!\left(  b\right)  =1,\ v_{2}\!\left(  a\right)  >1,\ d\equiv
3\ \operatorname{mod}8$. Then $E_{C_2}$ has Kodaira-N\'{e}ron type I$_{3}%
^{\ast}$ under these conditions. Next, we consider $a_6$ for $F_{C_2,10}$ as in (\ref{Instara6}). Since $v_{2}\!\left(  b\right)  =1$,
we observe that $a_{6,6}$ is even if and only if $\frac{a_{6}}{b^{2}}%
\equiv0\ \operatorname{mod}32$. Next, let $a=4k$ and $b=2j$ for some integers
$k,j$ with $j$ odd. Then, $d\equiv3\ \operatorname{mod}8$ implies that
$j^{2}d$ is $3$ or $11$ modulo $16$, and $\frac{a_{6}}{b^{2}}=2\left(
19j^{4}+36kj-3j^{2}d+12k^{2}-2j^{2}-8j^{3}\right)  $. It follows that
$\frac{a_{6}}{2b^{2}}\equiv25+4k\left(  1+3k\right)  -3j^{2}%
d\ \operatorname{mod}16$. If $k\equiv0,1\ \operatorname{mod}4$ (resp.\ $k\equiv2,3\ \operatorname{mod}4$), then $4k\left(  1+3k\right)  $ is
congruent to $0$ (resp.\ $8$) modulo $16$. It now follows that $\frac{a_{6}%
}{2b^{2}}\equiv0\ \operatorname{mod}16$ if
and only if $\left(  i\right)  $ $k\equiv0,1\ \operatorname{mod}4$ and
$j^{2}d\equiv3\ \operatorname{mod}16$ or $\left(  ii\right)  \ k\equiv
2,3\ \operatorname{mod}4$ and $j^{2}d\equiv11\ \operatorname{mod}16$. This gives us the claimed $c_2$ since $k=\frac{a}{4}$ and
$j^{2}d=\frac{b^{2}d}{4}$.

Suppose $v_{2}\!\left(  b\right)  =2,\ ad\equiv12\ \operatorname{mod}16$. By the above, $E_{C_2}$ has Kodaira-N\'{e}ron type $\mathrm{I}_{5}^{\ast}$ under these
conditions. Now let $a_{6}$ be as given in \eqref{Instara6} for $F_{C_2,16}$. Since $v_{2}\!\left(  b^{2}\left(
ad-4\right)  \right)  =7$, we have that $a_{6,8}$ is even if and only
if $\frac{2a_{6}}{b^{2}\left(  ad-4\right)  }\equiv0\ \operatorname{mod}8$. Next, let $a=2k,\ b=4j,$ and $d=2l$ for some odd integers $k,j,l$. In
particular, $kl\equiv3\ \operatorname{mod}4$ since $ad\equiv
12\ \operatorname{mod}16$ and thus $k-l$ is either $2$ or $6$ modulo $8$. Now observe that $\frac{2a_{6}}{b^{2}\left(  ad-4\right)} = 2+2j+k-4jl-8kl-4jkl-k^{2}l+2jk^{2}l^{2} $. In particular, $\frac{2a_{6}}{b^{2}\left(  ad-4\right)} \equiv6+k-l\ \operatorname{mod}8$. Note that $k-l\equiv2\ \operatorname{mod}8$ if and only if $kl\equiv3\ \operatorname{mod}8$ with $k\equiv3\ \operatorname{mod}8$ or $kl\equiv7\ \operatorname{mod}8$ with $k\equiv1\ \operatorname{mod}8$. Similarly, $k-l\equiv6\ \operatorname{mod}8$ if and only if $kl\equiv3\ \operatorname{mod}8$ with $k\equiv1\ \operatorname{mod}8$ or $kl\equiv7\ \operatorname{mod}8$ with $k\equiv3\ \operatorname{mod}8$. The claim now follows since $kl=\frac{ad}{4}$ and $c_2=4$ if and and only if $k-l\equiv 2\ \operatorname{mod}8$.

Lastly, we consider the case $n=2v_{2}\!\left(  b\right)  +1\geq7,a\equiv2\ \operatorname{mod}4$, and $ad\equiv12\ \operatorname{mod}16$. Now, we consider $a_6$ for $F_{C_2,22}$ as in (\ref{Instara6}).
We observe that, $c_{2}=4$ if and only if $ \frac{a_{6}}{b^{2}} \equiv 0 \ \operatorname{mod}32$. By
(\ref{Instara6}) we deduce that $\frac{a_{6}}{b^{2}}\equiv\frac{-a^{3}d^{2}%
}{4}+ad-4\ \operatorname{mod}32$. Next, we note that $\frac{a_{6}}{4b^{2}} \equiv-a+\frac{ad}{4}-1\ \operatorname{mod}8$ since  $\frac{a^{2}d^{2}}{16}\equiv
1\ \operatorname{mod}8$. The claimed $c_{2}$
now follows from the fact that $-a+\frac{ad}{4}-1\equiv 0\ \operatorname{mod}8$ if and only if (i) $a \equiv 2\ \operatorname{mod}8$ and $\frac{ad}{4} \equiv 3\ \operatorname{mod}8$ or (ii) $a \equiv 6 \ \operatorname{mod}8$ and $\frac{ad}{4} \equiv 7\ \operatorname{mod}8$.

\begin{equation}
{
\renewcommand{\arraystretch}{0.8}
\renewcommand{\arraycolsep}{.123cm}%
\scalebox{0.93}{
$\begin{array}
[c]{ccc}%
\toprule F_{C_2,i} & \text{ Conditions on }a,b,d & a_{6}\\
\toprule F_{C_2,5} & \multicolumn{1}{l}{v_{2}\!\left(  b\right)  =1,\ a,d\equiv
1\ \operatorname{mod}4} & d\left(  ab^{2}-4d\right) \\\midrule
F_{C_2,7} & \multicolumn{1}{l}{v_{2}\!\left(  b^{2}d-a^{2}\right)
=6,\ a\equiv2\ \operatorname{mod}4} & 2^{-15}\left(  b^{2}%
d-a^{2}+16a-96\right)  \left(  b^{2}d-a^{2}\right)  ^{2}\\\midrule
F_{C_2,8} & \multicolumn{1}{l}{v_{2}\!\left(  b^{2}d-a^{2}\right)
=2,\ v_{2}\!\left(  a\right)  =v_{2}\!\left(  b\right)  =1} & d\left(
d^{5}-b^{2}d^{2}-ad^{3}+ab^{2}-4d\right) \\\midrule
F_{C_2,9} & \multicolumn{1}{l}{v_{2}\!\left(  b\right)  =1,\ v_{2}\!\left(
a\right)  >1,\ d\equiv1\ \operatorname{mod}4} & -2^{-3}b^{2}\left(
8b^{6}-b^{4}-4ab^{2}+4b^{2}d-4a^{2}\right) \\\midrule
F_{C_2,10} & \multicolumn{1}{l}{v_{2}\!\left(  b\right)  =2,\ v_{2}\!\left(
d\right)  =0,\ a\equiv1\ \operatorname{mod}4} & 2^{-3}b^{2}\left(
a^{3}b^{4}d^{3}-2a^{3}b^{2}d^{2}-4ab^{2}d^{2}+8ad-8\right) \\\midrule
F_{C_2,11} & \multicolumn{1}{l}{v_{2}\!\left(  b\right)  =1,\ v_{2}\!\left(
a\right)  >1,\ d\equiv3\ \operatorname{mod}8} & 2^{-3}b^{2}(
19b^{4}+36ab^{2}-16b^{3} \\
 &  & -12b^{2}d+12a^{2}-8b^{2}) \\\midrule
F_{C_2,12} & \multicolumn{1}{l}{v_{2}\!\left(  b\right)  =1,\ v_{2}\!\left(
a\right)  >1,\ d\equiv7\ \operatorname{mod}8} & 2^{-3}b^{2}\left(
4ab^{2}-8b^{6}-15b^{4}-4b^{2}d+4a^{2}-8b^{2}\right) \\\midrule
F_{C_2,13} & \multicolumn{1}{l}{v_{2}\!\left(  b\right)  =2,\ v_{2}\!\left(
d\right)  =1,\ a\equiv1\ \operatorname{mod}4} & 4a^{3}-ab^{2}d-2b^{2}%
d+16a^{2}+20a-8\\\midrule
F_{C_2,14} & \multicolumn{1}{l}{v_{2}\!\left(  b\right)  =2,\ v_{2}\!\left(
a\right)  =1,\ d\equiv1\ \operatorname{mod}4} & 4a^{3}-ab^{2}d-16b^{2}\\\midrule
F_{C_2,15} & \multicolumn{1}{l}{v_{2}\!\left(  b\right)  =2,\ v_{2}\!\left(
a\right)  =1,\ d\equiv3\ \operatorname{mod}4} & 4a^{3}-ab^{2}d-16b^{2}%
+128bd\\

 &  & -256d^{2}-64b+256d-64\\\midrule
F_{C_2,16} & \multicolumn{1}{l}{v_{2}\!\left(  b\right)  =2,\ ad\equiv
4\ \operatorname{mod}16} & b^{2}\left(  ad-4\right) \\\midrule
F_{C_2,17} & \multicolumn{1}{l}{v_{2}\!\left(  b\right)  =2,\ ad\equiv
12\ \operatorname{mod}16} & 2^{-6}b^{2}\left(  ad-4\right)  (a^{2}%
d^{2}b-4a^{2}d-8abd+\\
&  & 16a+16b-16bd-64ad+64)\\\midrule
F_{C_2,18} & \multicolumn{1}{l}{n=2v_{2}\!\left(  b\right)  -2\geq
4,\ a\equiv1\ \operatorname{mod}4,} & b^{2}\left(  b^{4}-ab^{2}-b^{2}%
d+ad-1\right) \\
& \multicolumn{1}{l}{v_{2}\!\left(  d\right)  =0} & \\\midrule
F_{C_2,19} & \multicolumn{1}{l}{n=2v_{2}\!\left(  b\right)  -1\geq
5,\ a\equiv1\ \operatorname{mod}4,} & 2^{-3}b^{2}\left(  a^{3}%
d^{3}b-2a^{3}d^{2}-4abd^{2}+8ad-8\right) \\
& \multicolumn{1}{l}{v_{2}\!\left(  d\right)  =1} & \\\midrule
F_{C_2,20} & \multicolumn{1}{l}{n=2v_{2}\!\left(  b\right)  \geq6,\ a\equiv
2\ \operatorname{mod}4,} & 2^{-3}b^{2}(  a^{3}d^{3}b-2a^{3}%
d^{2}-4abd^{2} \\
 &  \multicolumn{1}{l}{v_{2}\!\left(  d\right)  =0} & +8ad-8d^{2}+16d-8) \\\midrule
F_{C_2,21} & \multicolumn{1}{l}{n=2v_{2}\!\left(  b\right)  +1\geq
7,\ a\equiv2\ \operatorname{mod}4,} & b^{2}\left(  a^{3}d^{3}b-a^{3}%
d^{2}-abd^{2}+ad-4\right) \\
& \multicolumn{1}{l}{ad\equiv4\ \operatorname{mod}16} & \\\midrule
F_{C_2,22} & \multicolumn{1}{l}{n=2v_{2}\!\left(  b\right)  +1\geq
7,\ a\equiv2\ \operatorname{mod}4,} & 2^{-3}b^{2}\left(  a^{3}d^{3}%
b-2a^{3}d^{2}-4abd^{2}+8ad-32\right) \\
& \multicolumn{1}{l}{ad\equiv12\ \operatorname{mod}16} & \\
\bottomrule
\end{array}$}
\ \ \label{Instara6}
}
\end{equation}

\textbf{Case 4.} Suppose $a,b,d$ satisfy one of the
conditions in Table~\ref{TableforC2} for Kodaira-N\'{e}ron types
$\mathrm{{II}^{\ast}}$, $\mathrm{{III}^{\ast}}$, or $\mathrm{{IV}^{\ast}}$. Then the theorem holds for these cases by Table~\ref{ta:PapTableIV} and the valuations below: 
\[
\left(  v_{2}\!\left(  c_{4}\right)  ,v_{2}\!\left(  c_{6}\right)
,v_{2}\!\left(  \Delta\right)  \right)  =\left\{
\begin{array}
[c]{ll}%
\left(  7,10,14\right)  & \text{if }v_{2}\!\left(  b^{2}d-a^{2}\right)
=3,\ v_{2}\!\left(  a\right)  =v_{2}\!\left(  b\right)  =1,\\
\left(  7,\geq11,15\right)  & \text{if }v_{2}\!\left(  b\right)
=v_{2}\!\left(  d\right)  =1,\ v_{2}\!\left(  a\right)  >1.
\end{array}
\right.
\]

For the remaining cases, \eqref{Tableforc2valforII*} lists the valuations of the Weierstrass
model $F_{C_2,i}$. By \eqref{Tableforc2valforII*}, each $F_{C_2,i}$ satisfies the first six steps of Tate's Algorithm and proceeds to Step 8 since $P(t)=t^{3}+a_{2,1}t^{2}+a_{4,2}t+a_{6,3}\equiv t^{3} \operatorname{mod}2$ has a triple root. Then, $E_{C_2}$ has the claimed Kodaira-N\'{e}ron type and $f_{2}$ by using Table~\ref{ta:PapTableIV} and the following triplet

\vspace{-0.1in}

\[
\left(  v_{2}\!\left(  c_{4}\right)  ,v_{2}\!\left(  c_{6}\right)
,v_{2}\!\left(  \Delta\right)  \right)  =\left\{
\begin{array}
[c]{ll}%
\left(  4,6,8\right)  & \text{if }v_{2}\!\left(  b\right)  =1,\ a,d\equiv3\ \operatorname{mod}4,\\
\left(  4,6,10\right)  & \text{if }v_{2}\!\left(  b^{2}d-a^{2}\right)
=7,\ a\equiv2\ \operatorname{mod}8,\\
\left(  4,6,10\right)  & \text{if }v_{2}\!\left(  b\right)  =2,\ v_{2}%
\!\left(  ad\right)  =0,\ a\equiv3\ \operatorname{mod}4,\\
\left(  4,6,11\right)  & \text{if }v_{2}\!\left(  b\right)  =2,\ v_{2}%
\!\left(  a\right)  =0,\ v_{2}\!\left(  d\right)  =1,\ a\equiv
3\operatorname{mod}4.
\end{array}
\right.
\]
\begin{equation}{
\renewcommand{\arraystretch}{0.2}\renewcommand{\arraycolsep}{.1cm}%
\scalebox{0.93}{
$\begin{array}
[c]{cccccccc}%
\toprule F_{C_2,i} & \text{Conditions on }a,b,d & v_{2}\!\left(  a_{1}\right)  &
v_{2}\!\left(  a_{2}\right)  & v_{2}\!\left(  a_{3}\right)  & v_{2}\!\left(
a_{4}\right)  & v_{2}\!\left(  a_{6}\right)  & \\
\toprule F_{C_2,4} & \multicolumn{1}{l}{v_{2}\!\left(  b\right)  =1,\ a,d\equiv3\ \operatorname{mod}4} & 1 & \geq2 & 2 & 3 &
\geq4 & \\\midrule
F_{C_2,6} & \multicolumn{1}{l}{v_{2}\!\left(  b^{2}d-a^{2}\right)
=7,\ a\equiv2\ \operatorname{mod}8} & 1 & \geq2 & 3 & 3 & 5 & \\\midrule
F_{C_2,9} & \multicolumn{1}{l}{v_{2}\!\left(  b\right)  =2,\ v_{2}\!\left(
ad\right)  =0,\ a\equiv3\ \operatorname{mod}4} & 1 & \geq2 & 3 & 3 & \geq5 &
\\\midrule
F_{C_2,12} & \multicolumn{1}{l}{v_{2}\!\left(  b\right)  =2,\ v_{2}\!\left(
a\right)  =0,\ v_{2}\!\left(  d\right)  =1,\ a\equiv3\ \operatorname{mod}4} &
1 & \geq2 & 3 & 3 & 4 & \\
\bottomrule &  &  &  &  &  &  &
\end{array}$}
\ \label{Tableforc2valforII*}}
\end{equation}
In particular, Table~\ref{ta:PapTableIV} determines $c_{2}$ except for when $v_{2}\!\left(  b\right)  =1,\ a,d\equiv
3\ \operatorname{mod}4$. Under these assumptions, $E_{C_2}$ has Kodaira-N\'{e}ron type $\mathrm{IV}^{\ast}$ and the Weierstrass coefficients $a_{j}$ of $F_{C_2,4}$ satisfy
$
t^{2}+a_{3,2}t-a_{6,4}\equiv t^{2}+t-\frac{a-d}{4}\text{ }\operatorname{mod}%
2.
$
This gives the claimed $c_{2}$ since $a-d\equiv0$ (resp.\ $4$) $\operatorname{mod}8$ if and only if $ad\equiv1$ (resp.\ 5) $\operatorname{mod}8$.
\end{proof}

\noindent Next, we consider the case when $E_{C_2}$ has additive reduction at an odd prime
$p$.
\begin{theorem}
\label{ThmforC2podd}
The family of elliptic curves $E_{C_{2}}$ has additive reduction at an odd prime $p$ if and only if the parameters of $E_{C_2}$ satisfy one of the conditions listed in Table~\ref{TableforC2podd}. Moreover, the Kodaira-N\'{e}ron type at $p$,
conductor exponent~$f_{p}$, and local Tamagawa number $c_{p}$ are given as in Table~\ref{TableforC2podd}.
\end{theorem}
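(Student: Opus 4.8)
The plan is to follow the architecture of the proof of Theorem~\ref{ThmC2at2}, but the odd-prime case is substantially cleaner. First I would invoke Theorem~\ref{TisC2Addred2Thm7} to record that $E_{C_2}$ has additive reduction at an odd prime $p$ exactly when $p\mid\gcd(a,bd)$, and Lemma~\ref{Lemma for minimal disc}(2) to note that the given model of $E_{C_2}$ is \emph{already} minimal over $\mathbb{Q}_p$ at every odd prime. Hence no preliminary change of variables is needed to obtain the invariants: taking $u=1$ in the formulas for $c_4,c_6,\Delta$ one computes
\[
v_p(\Delta)=2\,v_p(b)+v_p(d)+2\,v_p\!\left(b^2d-a^2\right),
\]
together with $v_p(c_4)=v_p\!\left(a^2+3b^2d\right)$ and $v_p(c_6)=v_p(a)+v_p\!\left(a^2-9b^2d\right)$. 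The problem then reduces to reading off the triplet $\left(v_p(c_4),v_p(c_6),v_p(\Delta)\right)$ and applying Lemma~\ref{LemmaPap}.

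Next I would set up the casework in terms of the triple $\left(v_p(a),v_p(b),v_p(d)\right)$, constrained by the squarefreeness of $d$ and $\gcd(a,b)$ (so $v_p(d)\le 1$ and $\min\{v_p(a),v_p(b)\}\le 1$) and by $p\mid\gcd(a,bd)$ (so $v_p(a)\ge 1$ and $v_p(b)+v_p(d)\ge 1$). Writing $V=v_p(b^2d-a^2)$, the quantity $V$ controls $v_p(\Delta)$, while $v_p(c_4)\in\{1,2,3\}$ organizes everything: $v_p(c_4)=1$ exactly when $p\nmid b$ (forcing $v_p(d)=1$); $v_p(c_4)=2$ in the remaining cases with $v_p(a)=1$ or with $v_p(a)\ge 2,\,v_p(b)=v_p(d)+1=1$; and $v_p(c_4)=3$ only when $v_p(a)\ge 2,\ v_p(b)=v_p(d)=1$. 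A short valuation count shows $v_p(\Delta)\notin\{2,4\}$, so there is no type $\mathrm{II}$ or $\mathrm{IV}$, and likewise $v_p(c_4)\le 3$ rules out $\mathrm{IV}^{\ast}$ and $\mathrm{II}^{\ast}$; the only outcomes are $\mathrm{III}$, $\mathrm{I}_n^{\ast}$, and $\mathrm{III}^{\ast}$.

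For $p\ge 5$ the triplet already determines the Kodaira--N\'eron type and forces $f_p=2$ by Lemma~\ref{LemmaPap}, and for types $\mathrm{III}$ and $\mathrm{III}^{\ast}$ one gets $c_p=2$ immediately. The real content is the $\mathrm{I}_n^{\ast}$ family. Here I would apply Lemma~\ref{LemmaIn}, but since the base model has $a_1=a_3=a_6=0$ the defining quadratic of that lemma degenerates to a perfect square; so for each $\mathrm{I}_n^{\ast}$ case I would first pass to an auxiliary minimal model $F_{C_2,i}$ obtained by a translation $x\longmapsto x+r$ (just as the $F_{C_2,i}$ in Theorem~\ref{ThmC2at2}), tabulate the $p$-adic valuations of its Weierstrass coefficients, and then read off $n$ and the splitting behaviour of the quadratic in \eqref{LemmaInpoly}. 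The value $c_p\in\{2,4\}$ reduces to whether that quadratic splits over $\mathbb{F}_p$, i.e.\ to an explicit quadratic-residue condition on $a,b,d$; in the degenerate subcase $v_p(a)=v_p(b)=1,\ v_p(d)=0$, where $n=2V-4$ is even and Lemma~\ref{LemmaIn} applies to the base model directly with rational root $t=0$, one always obtains $c_p=4$.

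The main obstacle is the prime $p=3$. There the explicit factors $3b^2d$ in $c_4$ and $9b^2d$ in $c_6$ shift the valuations, Lemma~\ref{LemmaPap} no longer pins down the type from the triplet alone, and wild ramification permits $f_3>2$ as well as the additional Kodaira types excluded above. I would treat $p=3$ by the supplementary conditions recorded in Table~\ref{ta:PapTableIandII}, and where those still fail to separate the cases, by running Tate's Algorithm on a suitable minimal model $F_{C_2,i}$ to the step indicated there; the resulting congruences on $a,b,d$ are precisely what fill the $p=3$ rows of Table~\ref{TableforC2podd}. I expect essentially all of the difficulty to concentrate in this $p=3$ analysis and in the bookkeeping of the quadratic-residue conditions that distinguish $c_p=2$ from $c_p=4$.
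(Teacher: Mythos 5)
Your proposal is correct and follows essentially the same route as the paper: additive reduction via Theorem~\ref{TisC2Addred2Thm7}, minimality of $E_{C_2}/\mathbb{Q}_p$ from Lemma~\ref{Lemma for minimal disc}, classification of the triplet $\left(v_p(c_4),v_p(c_6),v_p(\Delta)\right)$ through Lemma~\ref{LemmaPap} with the extra mod-$9$ conditions at $p=3$, and translated minimal models $F_{C_2,i}$ fed into Lemma~\ref{LemmaIn} with quadratic-residue criteria deciding $c_p\in\{2,4\}$. The only (harmless) deviations are organizational: you sort cases by $v_p(c_4)$ rather than directly by the table rows, and you observe that in the case $v_p(a)=v_p(b)=1$, $v_p(d)=0$ the even-$n$ quadratic of Lemma~\ref{LemmaIn} already splits with root $t=0$, a small shortcut where the paper instead passes to $F_{C_2,26}$.
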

{\begingroup
\renewcommand{\arraystretch}{1.1}
 \begin{longtable}{ccccc}
	\hline
	$p$ & K-N type & Conditions on $a,b,d$ & $c_{p}$ & $f_{p}$ \\
	\hline

	\endfirsthead
	\hline
	$p$ & K-N type & Conditions on $a,b,d$ & $c_{p}$ & $f_{p}$ \\
	\hline
	\endhead
	\hline
	\multicolumn{4}{r}{\emph{continued on next page}}
	\endfoot
	\hline
	\caption{ Local data at odd primes for $E_{C_{2}}(a,b,d)$}
	\endlastfoot

$3$ & $\mathrm{I}_{0}^{\ast}$ & $v_{p}\!\left(  a\right)  =v_{p}\!\left(b\right)  =1,\ v_{p}\!\left(  d\right)  =0,$ & $2 $ & $ 2$\\
&  & $ v_{p}\!\left(  b^{2}d-a^{2}\right)  =2 $&  & \\\midrule
$p\geq5 $ & $ \mathrm{I}_{0}^{\ast} $ & $ v_{p}\!\left(  a\right)  =v_{p}\!\left(
b\right)  =1,\ v_{p}\!\left(  d\right)  =0, $ &  \multicolumn{1}{l}{$2\ \text{ if}\big(\frac{d}{p}\big)=-1$} & $ 2$\\
&  & $v_{p}\!\left(  b^{2}d-a^{2}\right)  =2,\ v_{p}\!\left(  9b^{2}%
d-a^{2}\right)  =2 $ & \multicolumn{1}{l}{$4\ \text{ if }\big(\frac{d}%
{p}\big)=1$}& \\
\cmidrule{3-5} &  & $v_{p}\!\left(  a\right)  =v_{p}\!\left(  b\right)
=1,\ v_{p}\!\left(  d\right)  =0, $ & $ 4 $ & $ 2$\\
&  & $ v_{p}\!\left(  b^{2}d-a^{2}\right)  =2,\ v_{p}\!\left(  9b^{2}%
d-a^{2}\right)  \geq3 $ &  & \\\midrule
$\neq2 $ & $ \mathrm{III} $ & $ v_{p}\!\left(  a\right)  \geq1,\ v_{p}\!\left(  b\right)
=0,\ v_{p}\!\left(  d\right)  =1 $ & $ 2 $ & $ 2$\\
\cmidrule{2-5} & $\mathrm{I}_{0}^{\ast} $ & $ v_{p}\!\left(  a\right)
>1,\ v_{p}\!\left(  b\right)  =1,\ v_{p}\!\left(  d\right)  =0 $ & 
\multicolumn{1}{l}{$2\ \text{ if }\big(\frac{d}{p}\big)=-1$} & $ 2$\\
&  &  & \multicolumn{1}{l}{$4\ \text{ if }\big(\frac{d}{p}\big)=1$} & \\
\cmidrule{2-5}  & $ \mathrm{I}_{n}^{\ast} $ & $ v_{p}\!\left(  a\right)
=1,\ v_{p}\!\left(  d\right)  =1, $ & \multicolumn{1}{l}{$2\ \text{ if }%
\big(\frac{ad/p^{2}}{p}\big)=-1$} & $ 2$\\
&   & $ n=2v_{p}\!\left(  b\right)  -1\geq1 $ &  \multicolumn{1}{l}{$4\ \text{ if
}\big(\frac{ad/p^{2}}{p}\big)=1$} &  \\
\cmidrule{3-5} &   & $v_{p}\!\left(  a\right)  =1,\ v_{p}\!\left(  d\right)  =0, $ &
\multicolumn{1}{l}{$2\ \text{ if }\big(\frac{d}{p}\big)=-1$} & $ 2$\\
&  & $ n=2v_{p}\!\left(  b\right)  -2\geq2 $ &  \multicolumn{1}{l}{$4\ \text{ if
}\big(\frac{d}{p}\big)=1$}  & \\
\cmidrule{3-5} &   & $v_{p}\!\left(  a\right)  =v_{p}\!\left(  b\right)
=1,\ v_{p}\!\left(  d\right)  =0, $ & $ 4 $ & $ 2$\\
&  & $n=2v_{p}\!\left(  b^{2}d-a^{2}\right)  -4\geq2 $ &  & \\
\cmidrule{2-5} & $\mathrm{III}^{\ast}$ & $v_{p}\!\left(  a\right)  >1,\ v_{p}%
\!\left(  b\right)  =1,\ v_{p}\!\left(  d\right)  =1$ & $2$ & $2$
\label{TableforC2podd}
\end{longtable}
\endgroup}

\begin{proof}
By Theorem~\ref{TisC2Addred2Thm7}, $E_{C_2}$ has additive reduction at an odd prime
$p$ if and only if $p$ divides $\gcd\!\left(  a,bd\right)  $. In particular, this condition is equivalent to the parameters of $E_{C_2}$ satisfying one of the conditions in
Table~\ref{TableforC2podd}. Consequently, $E_{C_2}/\mathbb{Q}_{p}$ is a minimal model by Lemma~\ref{Lemma for minimal disc} and thus the invariants $c_{4}$, $c_{6}$, and $\Delta$ satisfy
\[
v_{p}(c_{4})=v_{p}(3b^{2}d+a^{2}),\qquad v_{p}(c_{6})=v_{p}(a)+v_{p}
(9b^{2}d-a^{2}),\qquad v_{p}(\Delta)=v_{p}(b^{2}d)+2v_{p}(b^{2}d-a^{2}).
\]
In particular, if $a,b,d$ satisfy one of the conditions appearing in
Table~\ref{TableforC2podd} for the Kodaira-N\'{e}ron type $\mathrm{III}$ or $\mathrm{III}^{\ast}$, then
\[
\left(  v_{p}\!\left(  c_{4}\right)  ,v_{p}\!\left(  c_{6}\right)
,v_{p}\!\left(  \Delta\right)  \right)  =\left\{
\begin{array}
[c]{cl}%
\left(  1,\geq2,3\right)   & \text{if }p\geq5,v_{p}\!\left(  a\right)
\geq1,v_{p}\!\left(  b\right)  =0,v_{p}\!\left(  d\right)  =1,\\
\left(  \geq2,3,3\right)   & \text{if }p=3,v_{3}\!\left(  a\right)
=1,v_{3}\!\left(  b\right)  =0,v_{3}\!\left(  d\right)  =1,\\
\left(  2,\geq5,3\right)   & \text{if }p=3,v_{3}\!\left(  a\right)
>1,v_{3}\!\left(  b\right)  =0,v_{3}\!\left(  d\right)  =1,\\
\left(  3,\geq5,9\right)   & \text{if }p\geq5,v_{p}\!\left(  a\right)
\geq2,v_{p}\!\left(  b\right)  =v_{p}\!\left(  d\right)  =1,\\
\left(  \geq4,6,9\right)   & \text{if }p=3,v_{3}\!\left(  a\right)
=2,v_{3}\!\left(  b\right)  =v_{3}\!\left(  d\right)  =1,\\
\left(  4,\geq8,9\right)   & \text{if }p=3,v_{3}\!\left(  a\right)
>2,v_{3}\!\left(  b\right)  =v_{3}\!\left(  d\right)  =1.
\end{array}
\right.
\]

By Table~\ref{ta:PapTableIandII} we have that the claim holds for each case
except possibly when $p=3$ with $\left(  i\right)  $ $v_{3}\!\left(  a\right)
=1,v_{3}\!\left(  b\right)  =0,v_{3}\!\left(  d\right)  =1$ or $\left(
ii\right)  $ $v_{3}\!\left(  a\right)  =2,v_{3}\!\left(  b\right)
=v_{3}\!\left(  d\right)  =1$ which require verification of the additional
condition. If $v_{3}\!\left(  a\right)  =1,v_{3}\!\left(  b\right)
=0,v_{3}\!\left(  d\right)  =1$, then $a=3k$ and $d=3l$ for some integers $k$
and $l$ with $3\nmid kl$. Moreover,%
\begin{align*}
	\left(  \frac{c_{6}}{27}\right)  ^{2}-\frac{c_{4}}{3}+2  & \equiv3\left(
	b^{2}k^{4}l+2b^{2}l+2k^{2}\right)  +k^{6}+2\ \operatorname{mod}9 = 0\ \operatorname{mod}9
\end{align*}
since $k^{6}+2\equiv3\ \operatorname{mod}9$. Hence $E_{C_2}$ has Kodaira-N\'{e}ron type
$\mathrm{III}$ at $3$ with $f_{3}=c_{3}=2$ by Table~\ref{ta:PapTableIandII}. A
similar argument shows that if $v_{3}\!\left(  a\right)  =2,v_{3}\!\left(
b\right)  =v_{3}\!\left(  d\right)  =1$, then $E_{C_2}$ has Kodaira-N\'{e}ron type
$\mathrm{III}^{\ast}$ at $3$ with $f_{3}=c_{3}=2$ by
Table~\ref{ta:PapTableIandII}, since $\left(  \frac{c_{6}}{729}\right)
^{2}-\frac{c_{4}}{27}+2\equiv0\ \operatorname{mod}9$. 

For the remaining cases, let $F_{C_2,i}$ be the elliptic curve attained from $E_{C_2}$ via the
admissible change of variables $x\longmapsto x+r_{i}$ and $y\longmapsto
y+s_{i}x+w_{i}$ where $r_{i},s_{i},w_{i}$ are as given in
Table~\ref{ta:WeierTisC2} for $i=23,24,\ldots,26$. Then the Weierstrass
coefficients $a_{j}$ of $F_{C_2,i}$ satisfy the valuations given in
Table~\ref{TableforC2podd}. In particular, each $F_{C_2,i}$ satisfies the first six steps of Tate's Algorithm.

\textbf{Case 1.} Suppose that the parameters $a,b,d$ of $E_{C_2}$
satisfy one of the conditions appearing in Table~\ref{TableforC2podd} for the
Kodaira-N\'{e}ron type $\mathrm{I}_{0}^{\ast}$. First, we consider $P(t)=t^{3}%
+a_{2,1}t^{2}+a_{4,2}t+a_{6,3}$ where $a_{j}$ are the Weierstrass coefficients 
of  $F_{C_2,23}$ and $F_{C_2,24}$ under the assumptions given in
\eqref{Tableforc2poddvals}. In particular,%
\begin{equation*}{\small
P\!\left(  t\right)  \equiv\left\{
\begin{array}
[c]{ll}%
t\left(  t^{2}+\frac{2a}{3}t+2\right)   \operatorname{mod}3 & \text{if }%
v_{3}\!\left(  a\right)  =v_{3}\!\left(  b\right)  =1, v_{3}\!\left(
d\right)  =0, v_{3}\!\left(  b^{2}d-a^{2}\right)  =2,\\
t\left(  t^{2}+\frac{2a}{p}t+\frac{a^{2}-b^{2}d}{p^{2}}\right)
 \operatorname{mod}p & \text{if }v_{p}\!\left(  a\right)  =v_{p}\!\left(
b\right)  =1, v_{p}\!\left(  d\right)  =0, v_{p}\!\left(  b^{2}%
d-a^{2}\right)  =2, p\geq5,\\
t\left(  t^{2}-\frac{db^{2}}{p^{2}}\right)  \operatorname{mod}p & \text{if
}v_{p}\!\left(  a\right)  >1,v_{p}\!\left(  b\right)  =1, v_{p}\!\left(
d\right)  =0, v_{p}\!\left(  b^{2}d-a^{2}\right)  =2,p\geq3,
\end{array}
\right.  
\label{P(t)inC2odd}}
\end{equation*}
has distinct roots in $\overline{\mathbb{F}}_{p}$. By Tate's Algorithm and
Table~\ref{ta:PapTableIandII} we deduce that the Kodaira-N\'{e}ron type at $p$ is
$\mathrm{I}_{0}^{\ast}$ with $f_{p}=2$ for each of these cases. It remains to
compute $c_{p}$. To this end, observe that by Tate's Algorithm it suffices to
determine whether the quadratic factors in $P(t)$
splits in
$\mathbb{F}_{p}$. This is equivalent to determining whether the
discriminant of the quadratic factors is a square or not. If $p\geq5$ with $v_{p}\!\left(  a\right)  =v_{p}\!\left(  b\right)  =1,\ v_{p}\!\left(
d\right)  =0,\ v_{p}\!\left(  b^{2}d-a^{2}\right)  =2$, then the
discriminant of the quadratic factor of $P\!\left(  t\right)
\ \operatorname{mod}p$ is $\frac{b^{2}d}{p^{2}}$. In particular, the quadratic splits in $\mathbb{F}_{p}$
if and only if $\left(  \frac{d}{p}\right)  =1$. This gives the claimed $c_{p}$ when $v_{p}(9b^{2}d-a^{2})=2$. Now suppose that $v_{p}(9b^{2}d-a^{2})\geq3$, so that
$9b^{2}d-a^{2}=p^{3}l$ for some integer $l$. Then $d=\frac{p^{3}l+a^{2}%
}{9b^{2}}\equiv\frac{a^{2}}{9b^{2}}\ \operatorname{mod}p$ and thus $c_{p}=4$. For
the remaining cases, $c_{p}$ is computed in a similar manner.
\begin{equation}{\small
\renewcommand{\arraystretch}{1}\renewcommand{\arraycolsep}{.1cm}%
\begin{array}
[c]{cccccccc}%
\toprule p & F_{C_2,i} & \text{Conditions on }a,b,d & v_{p}\!\left(
a_{1}\right)   & v_{p}\!\left(  a_{2}\right)   & v_{p}\!\left(  a_{3}\right)
& v_{p}\!\left(  a_{4}\right)   & v_{p}\!\left(  a_{6}\right)  \\
\toprule3 & F_{C_2,23} & \multicolumn{1}{l}{v_{p}\!\left(  a\right)
=v_{p}\!\left(  b\right)  =1,\ v_{p}\!\left(  d\right)  =0,} & 1 & 1 & 2 & 2 &
\geq4\\
&  & \multicolumn{1}{l}{v_{p}\!\left(  b^{2}d-a^{2}\right)  =2} &  &  &  &  &
\\\midrule
\geq5 & F_{C_2,24} & \multicolumn{1}{l}{v_{p}\!\left(  a\right)  =v_{p}\!\left(
b\right)  =1,\ v_{p}\!\left(  d\right)  =0,} & \geq1 & 1 & \geq3 & 2 & \geq4\\
&  & \multicolumn{1}{l}{v_{p}\!\left(  b^{2}d-a^{2}\right)  =2} &  &  &  &  & \\\midrule
\neq2 & F_{C_2,24} & \multicolumn{1}{l}{v_{p}\!\left(  a\right)  >1,\ v_{p}%
\!\left(  b\right)  =1,\ v_{p}\!\left(  d\right)  =0} & \geq1 & \geq2 & \geq3 & 2 &
\geq4\\\cmidrule{2-8}
& F_{C_2,25} & \multicolumn{1}{l}{v_{p}\!\left(  a\right)  =1,\ v_{p}\!\left(
d\right)  =1,\ n=2v_{p}\!\left(  b\right)  -1\geq1} & 1 & 1 & n+2 &
n+2 & n+3\\\cmidrule{3-8}
& & \multicolumn{1}{l}{v_{p}\!\left(  a\right)  =1,\ v_{p}\!\left(
d\right)  =0,\ n=2v_{p}\!\left(  b\right)  -2\geq2} & 1 & 1 & n+3 &
n+2 & n+3\\\cmidrule{2-8}
& F_{C_2,26} & \multicolumn{1}{l}{v_{p}\!\left(  a\right)  =v_{p}\!\left(
b\right)  =1,\ v_{p}\!\left(  d\right)  =0,} & \geq2 & 1 & \frac{n+6}{2} &
\frac{n+4}{2} & n+4\\
&  & \multicolumn{1}{l}{n=2v_{p}\!\left(  b^{2}d-a^{2}\right)  -4\geq2} &  &
&  &  & \\
\bottomrule &  &  &  &  &  &  &
\end{array}}
\label{Tableforc2poddvals}%
\end{equation}

\textbf{Case 2.} Let $a,b,d$ satisfy one of the conditions in
Table~\ref{TableforC2podd} for the Kodaira-N\'{e}ron type $\mathrm{I}_{n}^{\ast}$ with
$n>0$. By \eqref{Tableforc2poddvals} for $F_{C_2,25}$, $F_{C_2,26}$, and Lemma~\ref{LemmaIn}, we have that $E_{C_2}$ has Kodaira-N\'{e}ron type
$\mathrm{I}_{n}^{\ast}$ at $p$ with $f_{p}=2$ for the claimed $n$. We observe that $c_{p}=4$ if $v_{p}\!\left(  a\right)  =v_{p}\!\left(  b\right)
=1,\ v_{p}\!\left(  d\right)  =0$ with $n=2v_{p}\!\left(  b^{2}d-a^{2}\right)
-4\geq2$. Now suppose $v_{p}\!\left(  a\right)  =1,\ v_{p}\!\left(  d\right)
=1,$ and $n=2v_{p}\!\left(  b\right)  -1$. Then the Weierstrass coefficients $a_{j}$ of $F_{C_2,25}$ under these assumptions yield%
\[
t^{2}+a_{3,\frac{n+3}{2}}t-a_{6,n+3}\equiv t^{2}-\frac{ab^{2}d}{p^{n+3}%
}\ \operatorname{mod}p.
\]
This splits in $\mathbb{F}_{p}$ if and only if $\left(  \frac{ab^{2}d/p^{n+3}}%
{p}\right)  =\left(  \frac{ad/p^{2}}{p}\right)  =1$ which gives the claimed
$c_{p}$. Lastly, if $v_{p}\!\left(  a\right)  =1,\ v_{p}\!\left(  d\right)
=0,$ and $n=2v_{p}\!\left(  b\right)  -2\geq2$, then the Weierstrass
coefficients $a_{j}$ of $F_{C_2,25}$ under these assumptions gives%
\begin{equation}
a_{2,1}t^{2}+a_{4,\frac{n+4}{2}}t-a_{6,n+3}\equiv\frac{-a\left(  a+1\right)
}{p}t^{2}+\frac{ab^{2}d}{p^{n+3}}\ \operatorname{mod}p.\label{TamaC2poddIn}%
\end{equation}
This splits in $\mathbb{F}_{p}$ if and only if the $\frac{4a^{2}\left(
a+1\right)  b^{2}d}{p^{n+4}}$ is a square in $\mathbb{F}_{p}$. Since
$\frac{4a^{2}b^{2}}{p^{n+4}}$ is a nonzero square in $\mathbb{F}_{p}$, we deduce that
\eqref{TamaC2poddIn} splits in $\mathbb{F}_{p}$ if and only if $\left(  \frac{\left(  a+1\right)
d}{p}\right)  =\left(  \frac{d}{p}\right)  =1$ which concludes the proof.
\end{proof}


\subsection{Case of \texorpdfstring{$3$}{}-torsion}
\label{Sect:3tors}
Next, we consider the families $E_{C_3}=E_{C_3}(a,b)$ for relatively prime integers $a,b$ and $E_{C_3}^0=E_{C_3}^0(a)$ for a positive cubefree integer $a$. Then by Proposition~\ref{rationalmodels}, $E_{C_3}$ and $E_{C_3}^0$ parameterize all rational elliptic curves that have a $3$-torsion point.

\begin{theorem}
\label{ThmforC30}
The family of elliptic curves $E_{C_{3}^{0}}$ has additive reduction at a prime $p$ if and only if the parameter $a$ of $E_{C_{3}^{0}}$  satisfies one of the conditions listed in Table~\ref{Table for C30}, and the Kodaira-N\'{e}ron type at $p$,
conductor exponent $f_{p}$, and local Tamagawa number $c_{p}$  are given as follows.
\end{theorem}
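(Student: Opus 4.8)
The plan is to run the analysis directly on the minimal model. By Proposition~\ref{rationalmodels}(4) the family is $E_{C_{3}^{0}}(a):y^{2}+ay=x^{3}$ with $a$ a positive cubefree integer, and by Lemma~\ref{Lemma for minimal disc}(1) this Weierstrass equation is already minimal at every prime. First I would record from \eqref{basicformulas} that $c_{4}=0$, $c_{6}=-216a^{2}$, and $\Delta=-27a^{4}$, so that $j=0$. Since $c_{4}=0$, the condition $v_{p}(c_{4})>0$ holds automatically, and minimality makes additive reduction at $p$ equivalent to $v_{p}(\Delta)=v_{p}(27)+4v_{p}(a)>0$. As $a$ is cubefree, $v_{p}(a)\in\{0,1,2\}$; thus $E_{C_{3}^{0}}$ has additive reduction at $p=3$ for every $a$, and at a prime $p\neq3$ exactly when $p\mid a$. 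This establishes the stated equivalence and the list of conditions on $a$ in Table~\ref{Table for C30}.

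The remaining work is a case analysis on $p$ and $v_{p}(a)$, reading the Kodaira-N\'{e}ron type and $f_{p}$ off the triplet $(v_{p}(c_{4}),v_{p}(c_{6}),v_{p}(\Delta))$ through Lemma~\ref{LemmaPap}, and extracting $c_{p}$ from the governing quadratic in Tate's Algorithm. The key structural observation is that the relevant Tate steps depend only on $v_{p}(b_{6})=v_{p}(a^{2})=2v_{p}(a)$ and on the cubic $P(t)=t^{3}$ (all its lower coefficients vanish), so the type is uniform in $p$: for $v_{p}(a)=1$ one stops at Step~5 with type $\mathrm{IV}$, and for $v_{p}(a)=2$ one passes the triple root of $P$ and stops at Step~8 with type $\mathrm{IV}^{\ast}$. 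In Step~5 the quadratic is $t^{2}+a_{3,1}t-a_{6,2}\equiv t\,(t+a/p)\bmod p$ and in Step~8 it is $t^{2}+a_{3,2}t-a_{6,4}\equiv t\,(t+a/p^{2})\bmod p$, each splitting into distinct roots in $\mathbb{F}_{p}$, whence $c_{p}=3$ in both cases. For $p\geq5$ this gives type $\mathrm{IV}$ with $(f_{p},c_{p})=(2,3)$ when $v_{p}(a)=1$ and type $\mathrm{IV}^{\ast}$ with $(f_{p},c_{p})=(2,3)$ when $v_{p}(a)=2$; the prime $p=2$ is identical (the quadratics reduce to $t(t+1)\bmod2$), again with $f_{2}=2$, so the reduction at $2$ is tame, consistent with $2$ being inert in $\mathbb{Q}(\sqrt{-3})$.

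The delicate prime, and the main obstacle, is $p=3$, where Lemma~\ref{LemmaPap} warns that the triplet need not pin down the type and where wild ramification appears. When $v_{3}(a)=1$ or $2$ the same Tate computation applies verbatim, now with $v_{3}(\Delta)=7$ or $11$: one gets type $\mathrm{IV}$ with $f_{3}=v_{3}(\Delta)-2=5$ and type $\mathrm{IV}^{\ast}$ with $f_{3}=v_{3}(\Delta)-6=5$ respectively, and $c_{3}=3$ from the same split quadratics. The genuinely ambiguous case is $3\nmid a$, where the triplet is $(\infty,3,3)$ and Papadopoulos's table allows both $\mathrm{II}$ and $\mathrm{III}$. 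I would resolve this with the additional $p=3$ condition recorded in Table~\ref{ta:PapTableIandII}: here $\left(\tfrac{c_{6}}{27}\right)^{2}-\tfrac{c_{4}}{3}+2\equiv a^{4}+2\pmod9$, which vanishes modulo $9$ exactly when $a\equiv\pm2\pmod9$. This yields type $\mathrm{III}$ with $(f_{3},c_{3})=(2,2)$ when $a\equiv\pm2\pmod9$, and type $\mathrm{II}$ with $(f_{3},c_{3})=(3,1)$ otherwise. As a cross-check one may complete the square to $y^{2}=x^{3}+a^{2}/4$, translate the cusp to the origin, and observe that the split between Steps~3 and~4 of Tate's Algorithm is decided by whether $v_{3}(a^{4}-16)$ equals $1$ or is $\geq2$, recovering the same dichotomy. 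Collecting the cases reproduces every row of Table~\ref{Table for C30}.
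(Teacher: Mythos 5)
Your proposal is correct and follows essentially the same route as the paper: compute $(c_4,c_6,\Delta)=(0,-216a^2,-27a^4)$, settle the ambiguous case $p=3$, $3\nmid a$ via Papadopoulos's additional congruence $(c_6/27)^2-c_4/3+2\equiv 64a^4+2\equiv a^4+2\pmod 9$, and read off types $\mathrm{IV}$ and $\mathrm{IV}^*$ from Steps 5 and 8 of Tate's Algorithm when $v_p(a)=1,2$. The only (harmless) difference is that you run Tate's Algorithm directly on $y^2+ay=x^3$, correctly observing that its coefficients already meet the required normalizations, whereas the paper first passes to the translated model $F_{C_3^0}$ via $x\mapsto x+a$, $y\mapsto y+a^2x+a^2$; both yield the same split quadratics and hence $c_p=3$.
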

{\begingroup
\renewcommand{\arraystretch}{1.0}
 \begin{longtable}{ccccc}
	\hline
	$p$ & K-N type & Conditions on $a,b,d$ & $c_{p}$ & $f_{p}$ \\
	\hline

	\endfirsthead
	\hline 
	$p$ & K-N type & Conditions on $a,b,d$ & $c_{p}$ & $f_{p}$ \\
	\hline
	\endhead
	\hline
	
	\multicolumn{3}{r}{\emph{continued on next page}}
	\endfoot
	\hline
	\caption{Local data for $E_{C_{3}^0}(a)$}\label{Table for C30}
	\endlastfoot

$3 $ & $ \mathrm{II} $ & $ v_{3}\!\left(  a\right)  =0\text{ and }a\equiv\pm
1,\pm4\ \operatorname{mod}9 $ & $ 3 $ & $ 1$\\\cmidrule{2-5}
& $\mathrm{III} $ & $ v_{3}\!\left(  a\right)  =0\text{ and }a\equiv\pm
2\ \operatorname{mod}9 $ & $ 2 $ & $ 2$\\\cmidrule{2-5}
& $\mathrm{IV} $ & $ v_{3}\!\left(  a\right)  =1 $ & $ 5 $ & $ 3$\\\cmidrule{2-5}
& $\mathrm{IV}^{\ast} $ & $ v_{3}\!\left(  a\right)  =2 $ & $ 5 $ & $ 3$\\\midrule
$p\neq3 $ & $ \mathrm{IV} $ & $ v_{p}\!\left(  a\right)  =1 $ & $ 2 $ & $ 3$\\\cmidrule{2-5}
& $\mathrm{IV}^{\ast} $ & $ v_{p}\!\left(  a\right)  =2 $ & $ 2 $ & $ 3$
\end{longtable}
\endgroup}
\vspace{-1.2em}
\begin{proof}
Since the invariants associated to $E_{C_3^0}$ are $c_{4}=0$, $c_{6}=-216a^{2}$, and $\Delta=-27a^{4}$, we have that $E_{C_3^0}$ is a global minimal model when $a$ is cubefree. In particular, $E_{C_3^0}$ has additive reduction at a prime $p$ if and only if $p=3$ or $v_{p}(a)=1,2$. We consider three cases: $\left(  i\right)  $ $p=3$ with $v_{3}\!\left(a\right)  =0$,\ $\left(  ii\right)  \ v_{p}\!\left(  a\right)  =1$ for $p$ any prime, and $\left(  iii\right)  $ $v_{p}\!\left(  a\right)  =2$ for $p$ any prime. We start by noting that if $p=3$ with $v_{3}\!\left(  a\right)  =0$,
then 
$  v_{3}\!\left(  c_{4}\right) =\infty ,v_{3}\!\left(  c_{6}\right)=3, \text{ and }v_{3}\!\left(  \Delta\right)=3 $. 

By Table~\ref{ta:PapTableIandII}, $E_{C_3^0}$ has Kodaira-N\'{e}ron type II or $\mathrm{III}$ at $3$.
The claim now follows from the additional condition in
Table~\ref{ta:PapTableIandII} since $\frac{c_{4}}{3}\equiv
0\ \operatorname{mod}9$ and
\[
\left(  \frac{c_{6}}{3^{3}}\right)  ^{2}+2=64a^{4}+2\equiv\left\{
\begin{array}
[c]{ll}%
3,6\ \operatorname{mod}9 & \text{if }a\equiv\pm1,\pm4\ \operatorname{mod}9,\\
0\ \operatorname{mod}9 & \text{if }a\equiv\pm2\ \operatorname{mod}9.
\end{array}
\right.
\]

Next, suppose that $p$ is any prime and that $v_{p}\!\left(  a\right)  >0$.
Since $a$ is cubefree, we note that $v_{p}\!\left(  a\right)  =1,2$. Now
consider the Weierstrass model $F_{C_3^0}$ attained from $E_{C_3^0}  $ via the admissible change of variables $x\longmapsto x+a$ and
$y\longmapsto y+a^{2}x+a^{2}$. Then the Weierstrass coefficients $a_{j}$ of
$F_{C_3^0}$ satisfy%
\begin{equation}
\left(  v_{p}\!\left(  a_{1}\right)  ,v_{p}\!\left(  a_{2}\right)
,v_{p}\!\left(  a_{3}\right)  ,v_{p}\!\left(  a_{4}\right)  ,v_{p}\!\left(
a_{6}\right)  \right)  =\left(  \geq2,\geq v_{p}\!\left(  a\right)
,v_{p}\!\left(  a\right)  ,\geq2v_{p}\!\left(  a\right)  ,4v_{p}\!\left(
a\right)  \right)  . \label{WeierCoefficientsC30}%
\end{equation}

\textbf{Case 1.} Suppose $v_{p}\!\left(  a\right)  =1$. By
(\ref{WeierCoefficientsC30}), $F_{C_3^0}$ satisfies
the first four steps of Tate's Algorithm. Next, let $b_{j}$ denote the
quantities associated to the model $F_{C_3^0}$ and observe that $\left(
v_{p}\!\left(  b_{6}\right)  ,v_{p}\!\left(  b_{8}\right)  \right)  =\left(
2,\geq3\right)  $. By Tate's Algorithm, we conclude that $E_{C_3^0}$ has Kodaira-N\'{e}ron
type IV at $p$ with $f_{p}=v_{p}\!\left(  \Delta\right)  -2=2+3v_{p}\!\left(
3\right)  $. Moreover, $c_{p}=3$ since $t^{2}+a_{3,1}t-a_{6,2}\equiv t\left(
t+a_{3,1}\right)  \ \operatorname{mod}p$.

\textbf{Case 2.} Suppose\textbf{ }$v_{p}\!\left(  a\right)  =2$. By
\eqref{WeierCoefficientsC30}, $F_{C_3^0}$ satisfies the first six steps of Tate's Algorithm and $t^{3}+a_{2,1}t^{2}%
+a_{4,2}t+a_{6,3}\equiv t^{3}\ \operatorname{mod}p$. It follows that $E_{C_3^0}$ has Kodaira-N\'{e}ron type
$\mathrm{IV}^{\ast}$ at $p$ with $f_{p}=v_{p}\!\left(  \Delta\right)  -6=2+3v_{p}%
\!\left(  3\right)  $. Lastly, $c_{p}=3$ since $t^{2}+a_{3,2}t-a_{6,4}\equiv t\left(
t+a_{3,2}\right)  \ \operatorname{mod}p$ has distinct roots in $\mathbb{F}
_{p}$.
\end{proof}

Next, we consider the family $E_{C_{3}}\!\left(  a,b\right)$ with $\gcd(a,b)=1$.

\begin{theorem}
\label{ThmforC3}
For the family of elliptic curves $E_{C_3}=E_{C_3}(a,b)$, write $a=c^{3}d^{2}e$ where $d$ and $e$ are positive relatively prime squarefree integers. Then $E_{C_3}$ has additive reduction at a prime $p$ if and only if the parameters of $E_{C_3}$ satisfy one of the conditions listed in Table~\ref{TableforC3}, and the Kodaira-N\'{e}ron type at $p$,
conductor exponent $f_{p}$, and local Tamagawa number $c_{p}$ are given as follows.
\end{theorem}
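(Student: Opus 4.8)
The plan is to follow the template of the proofs of Theorems~\ref{ThmforC30} and \ref{ThmC2at2}: produce an explicit minimal model, read off the triple $\left(v_{p}(c_4),v_{p}(c_6),v_{p}(\Delta)\right)$, feed it into Lemma~\ref{LemmaPap}, and resolve the residual ambiguities at $p=2,3$ by direct appeal to Tate's Algorithm and Lemma~\ref{LemmaIn}. First I would record the invariants of the family. From the model $y^{2}+axy+a^{2}by=x^{3}$ one computes $c_{4}=a^{3}(a-24b)$, $c_{6}=-a^{4}(a^{2}-36ab+216b^{2})$, and $\Delta=a^{8}b^{3}(a-27b)$. Writing $a=c^{3}d^{2}e$ with $d,e$ coprime squarefree and applying the admissible change of variables $x\mapsto u_{T}^{2}x$, $y\mapsto u_{T}^{3}y$ with $u_{T}=c^{2}d$ from Lemma~\ref{Lemma for minimal disc}(3), the minimal model $E_{C_3}'$ has
\[
c_{4}'=cd^{2}e^{3}(a-24b),\qquad c_{6}'=-d^{2}e^{4}(a^{2}-36ab+216b^{2}),\qquad \Delta'=d^{4}e^{8}b^{3}(a-27b).
\]
Since $\gcd(a,b)=1$ and $d,e$ are coprime and squarefree, I would then analyze each prime $p$ according to $v_{p}$ of $c,d,e,b$, and $a-27b$.

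For $p\geq 5$ a valuation count shows that additive reduction occurs exactly when $p\mid de$: additive reduction requires both $v_{p}(c_{4}')>0$ and $v_{p}(\Delta')>0$, and the expressions $a-24b\equiv a$, $a-27b\equiv -27b$ modulo $p$ force one of these valuations to vanish whenever $p\mid c$, $p\mid b$, or $p\mid(a-27b)$ with $p\nmid de$. Setting $\gamma=v_{p}(c)$, the remaining cases give the triple $(\gamma+2,2,4)$ when $p\mid d$ and $(\gamma+3,4,8)$ when $p\mid e$; by Table~\ref{ta:PapTableIV} these are Kodaira--N\'{e}ron types $\mathrm{IV}$ and $\mathrm{IV}^{*}$ respectively, independently of $\gamma$, each with $f_{p}=2$. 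The Tamagawa number is then settled by Tate's Algorithm, which gives $c_{p}\in\{1,3\}$ according to whether the associated quadratic $t^{2}+a_{3,\cdot}\,t-a_{6,\cdot}$ splits over $\mathbb{F}_{p}$; this reduces to the Legendre-symbol conditions recorded in Table~\ref{TableforC3}.

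The substance of the proof is $p=3$ and $p=2$, where the triple alone does not determine the type. At $p=3$ one checks that additive reduction occurs precisely when $3\mid a$ (equivalently $3\mid cde$, since $a-24b\equiv a\ \operatorname{mod}3$), and I would subdivide by $v_{3}(a)\ \operatorname{mod}3$, i.e.\ by whether $3$ divides $c$, $d$, or $e$. The delicate point is that $24,27,36,216$ are all divisible by $3$, so $a-24b$, $a-27b$, and $a^{2}-36ab+216b^{2}$ acquire extra $3$-adic valuation governed by congruences of $a$ and $b$ modulo increasing powers of $3$. For each subcase I would either invoke the additional $p=3$ condition of Table~\ref{ta:PapTableIandII} (exactly as the types $\mathrm{II}$, $\mathrm{III}$, $\mathrm{IV}^{*}$ are handled in Theorem~\ref{ThmforC30}), or, when type $\mathrm{I}_{n}^{*}$ arises, apply Lemma~\ref{LemmaIn} to pin down $n$ and compute $c_{3}$ by testing the splitting of the associated quadratic modulo $3$. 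The case $p=2$ is treated analogously: I would first pass from $E_{C_3}'$ via a further admissible change of variables to an integral model $F_{C_3}$ satisfying the early steps of Tate's Algorithm, then read the type and $c_{2}$ off the $2$-adic valuations of its Weierstrass coefficients together with Lemma~\ref{LemmaIn}.

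I expect the main obstacle to be the bookkeeping at $p=2$ and $p=3$: organizing the congruence conditions on $a$ and $b$ so that they partition exactly into the rows of Table~\ref{TableforC3}, and carrying out the parity and quadratic-residue computations that distinguish $c_{p}=2$ from $c_{p}=4$ in the $\mathrm{I}_{n}^{*}$ rows. These are routine in principle but require the same careful SageMath-assisted verification of the valuations of $c_{4}$, $c_{6}$, $\Delta$, and the Weierstrass coefficients of each auxiliary model $F_{C_3}$ that the earlier theorems rely on.
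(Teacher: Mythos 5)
Your proposal follows essentially the same route as the paper: the same minimal model and invariants obtained from Lemma~\ref{Lemma for minimal disc}, the triplet $\left(v_p(c_4),v_p(c_6),v_p(\Delta)\right)$ fed into Lemma~\ref{LemmaPap}, and auxiliary models resolved by Tate's Algorithm and Lemma~\ref{LemmaIn} at the ambiguous primes; the only organizational difference is that the paper treats all $p\neq3$ (including $p=2$) uniformly with a single model $F_{C_3,1}$ rather than splitting off $p=2$ as a separate delicate case. One small correction: the $p\neq3$ rows of Table~\ref{TableforC3} carry no Legendre-symbol conditions, because the relevant quadratic in Tate's Algorithm reduces to $t\left(t+a_{3,\cdot}\right)\ \operatorname{mod}p$, which always has two distinct roots in $\mathbb{F}_p$, so $c_p=3$ unconditionally there.
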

{\begingroup
\renewcommand{\arraystretch}{1.1}
 \begin{longtable}{ccccc}
	\hline
	$p$ & K-N type & Conditions on $a,b$ & $f_{p}$ & $c_{p}$ \\
	\hline

	\endfirsthead

	\hline
	$p$ & K-N type & Conditions on $a,b,d$ & $f_{p}$ & $c_{p}$ \\
	\hline
	\endhead
	\hline

	\multicolumn{3}{r}{\emph{continued on next page}}
	\endfoot
	\hline
	\caption{Local data for $E_{C_3}(a,b)$}\label{TableforC3}
	\endlastfoot

$\neq3 $ & $ \mathrm{IV} $ & $ v_{p}\!\left(  a\right)  \equiv2\ \operatorname{mod}3 $ & $
2 $ & $ 3$\\\cmidrule{2-5}
& $\mathrm{IV}^{\ast} $ & $ v_{p}\!\left(  a\right)  \equiv1\ \operatorname{mod}3 $ & $
2 $ & $ 3$\\\midrule
$3 $ & $ \mathrm{II} $ & $ v_{3}\!\left(  a-27b\right)  =4 $ & $ 4 $ & $ 1$\\\cmidrule{3-5}
&  & $v_{3}\!\left(  a\right)  \equiv0\ \operatorname{mod}3,v_{3}\!\left(
a-27b\right)  =3, $ & $ 3 $ & $ 1$\\
&  & $bd^{2}e^{3}\left(  b^{3}d^{2}e^{5}-c\right)  \not \equiv
-2\ \operatorname{mod}9 $ &   & \\\cmidrule{2-5}
& $\mathrm{III} $ & $ v_{3}\!\left(  a\right)  \equiv0\ \operatorname{mod}%
3,v_{3}\!\left(  a-27b\right)  =3, $ & $ 2 $ & $ 2$\\
&  & $bd^{2}e^{3}\left(  b^{3}d^{2}e^{5}-c\right)  \equiv-2\ \operatorname{mod}%
9 $ &  & \\\cmidrule{2-5}
& $\mathrm{IV} $ & $ v_{3}\!\left(  a\right)  =2 $ & $ 4 $ & $ 3$\\\cmidrule{3-5}
&  & $v_{3}\!\left(  a\right)  \equiv2\ \operatorname{mod}3,\text{ }%
v_{3}\!\left(  a\right)  \neq2 $ & $ 5 $ & $ 3$\\\cmidrule{3-5}
&  & $v_{3}\!\left(  a-27b\right)  =5 $ & $ 3 $ & \multicolumn{1}{l}{$1\text{ if
}a^{2}-27ab\not \equiv 3^{8}\ \operatorname{mod}3^{9}$}\\
&  &  &  & \multicolumn{1}{l}{$3\text{ if }a^{2}-27ab\equiv3^{8}%
\ \operatorname{mod}3^{9}$}\\\cmidrule{2-5}
& $\mathrm{I}_{0}^{\ast} $ & $ v_{3}\!\left(  a-27b\right)  =6 $ & $ 2$ &
\multicolumn{1}{l}{$1\text{ if }a^{2}-27ab\equiv3^{9}\ \operatorname{mod}%
3^{10}$}\\
&  &  &  & \multicolumn{1}{l}{$2\text{ if }a^{2}-27ab\not \equiv
3^{9}\ \operatorname{mod}3^{10}$}\\\cmidrule{2-5}
& $\mathrm{I}_{n}^{\ast} $ & $ n=v_{3}\!\left(  a-27b\right)  -6\geq1 $ & $ 2$ &
\multicolumn{1}{l}{$2\text{ if }a^{2}-27ab\not \equiv 3^{9+n}%
\ \operatorname{mod}3^{10+n}$}\\
&  &  &  & \multicolumn{1}{l}{$4\text{ if }a^{2}-27ab\equiv3^{9+n}%
\ \operatorname{mod}3^{10+n}$}\\\cmidrule{2-5}
& $\mathrm{IV}^{\ast} $ & $ v_{3}\!\left(  a\right)  =1 $ & $ 3 $ & $ 3$\\\cmidrule{3-5}
&  & $v_{3}\!\left(  a\right)  \equiv1\ \operatorname{mod}3,\text{ }%
v_{3}\!\left(  a\right)  \neq1 $ & $ 5 $ & $ 3$	
\end{longtable}
\endgroup}

\begin{proof}
By \cite[Theorem 7.1]{Barrios2020}, $E_{C_3}$ has additive
reduction at a prime $p$ if and only if $(i)$ $p$ divides $de$ or $(ii)$ $p=3$ and $v_{3}\!\left(  a\right)>0$. This is equivalent to the conditions listed in Table~\ref{TableforC3}.
Using Lemma~\ref{Lemma for minimal disc} we get that the minimal
discriminant $\Delta$ and the invariants $c_{4}$, $c_{6}$
associated to a minimal model of $E_{C_3}$ over $\mathbb{Q}_p$ are%
\[
c_{4}=cd^{2}e^{3}\left(  a-24b\right),  \qquad c_{6}=-d^{2}e^{4}\left(
a^{2}-36ab+216b^{2}\right), \qquad\Delta=d^{4}e^{8}b^{3}\left(  a-27b\right)
.
\]
Next, let $u=c^{2}d$ and let $F_{C_3,1}$ and $F_{C_3,2}$ be the models attained via the
admissible change of variables $x\longmapsto u^2x+ u^2p^2$,
$y\longmapsto u^3y$ and
$x\longmapsto u^2x-\frac{u^3de^2}{9}$, $y\longmapsto u^3y-u^3cde+\frac{u^4cd^2e^3}{27}$, respectively. Moreover, the Weierstrass coefficients  $a_{j}$ of
$F_{C_3,i}$ and the quantities $b_{j}$ associated to $F_{C_3,i}$ as in \eqref{basicformulas} have the following valuations:
\begin{equation}
\renewcommand{\arraystretch}{1.02}\renewcommand{\arraycolsep}{.12cm}%
\begin{array}
[c]{ccccccccc}%
\toprule p & F_{C_3,i} & \text{Conditions on }a,b & v_{p}\!\left(  a_{1}\right)
& v_{p}\!\left(  a_{2}\right)   & v_{p}\!\left(  a_{3}\right)   &
v_{p}\!\left(  a_{4}\right)   & v_{p}\!\left(  a_{6}\right)   & v_{p}\!\left(
\Delta\right)  \\
\toprule\geq2 & F_{C_3,1} & v_{p}\!\left(  a\right)  \equiv2\ \operatorname{mod}%
3 & \geq1 & \geq2 & 1 & \geq4 & 6 & 4+v_{p}\!\left(  a-27b\right)
\\\cmidrule{3-9}
&  & v_{p}\!\left(  a\right)  \equiv1\ \operatorname{mod}3 & \geq1 & \geq2 &
2 & \geq4 & 6 & 8+v_{p}\!\left(  a-27b\right)  \\\midrule
3 & F_{C_3,2} & v_{3}\!\left(  a-27b\right)  =5 & 1 & 1 & 2 & 3 & 2 & 5\\
\bottomrule &  &  &  &  &  &  &  &
\end{array}
\label{TableforC3vals}%
\end{equation}

\vspace{-0.5em}
\textbf{Case 1.} Suppose the parameters of $E_{C_3}$ satisfy one of the
conditions in Table~\ref{TableforC3} for the Kodaira-N\'{e}ron type II or $\mathrm{III}$.
If $v_{3}\!\left(  a-27b\right)  =4$, then 
$v_{3}\!\left(c_{4}\right)=2  ,v_{3}\!\left(  c_{6}\right)=3, \text{ and }v_{3}\!\left(  \Delta\right)=4$.
By Table~\ref{ta:PapTableIandII} we
conclude that $E_{C_3}$ has Kodaira-N\'{e}ron type II at $3$ with $(c_{3},f_{3})=(1,4)$. Now suppose $v_{3}\!\left(  a\right)  \equiv0\ \operatorname{mod}%
3$ and $v_{3}\!\left(  a-27b\right)  =3$. Then 
$ v_{3}\!\left(  c_{4}\right)\geq 2,v_{3}\!\left(  c_{6}\right)=3, \text{ and }v_{3}\!\left(  \Delta\right)=3$. Thus the Kodaira-N\'{e}ron type at $3$ is either II or $\mathrm{III}$ by
Table~\ref{ta:PapTableIandII}. The claim now follows from the additional
condition in Table~\ref{ta:PapTableIandII} since%
\[
\left(  \frac{c_{6}}{27}\right)  ^{2}-\frac{c_{4}}{3}+2\equiv bd^{2}%
e^{3}\left(  b^{3}d^{2}e^{5}-c\right)  +2\ \operatorname{mod}9.
\]

\textbf{Case 2.} Suppose the parameters of $E_{C_3}$ satisfy one of the
conditions appearing in Table~\ref{TableforC3} for the Kodaira-N\'{e}ron type IV or
$\mathrm{IV}^{\ast}$. 

\qquad\textbf{Subcase 2a.} Let $p$ be any prime and $v_{p}\!\left(
a\right)  \equiv2\ \operatorname{mod}3$. Then, the Weierstrass model $F_{C_3,1}$ satisfies $v_{3}\!\left(
b_{8}\right)  \geq3$ and $v_{3}\!\left(  b_{6}\right)  =2$. Consequently, $E_{C_3}$ has Kodaira-N\'{e}ron type $\mathrm{IV}$ at $p$ and $f_{p}=v_{p}\!\left(  \Delta\right) -2$ by
(\ref{TableforC3vals}) and Tate's Algorithm. Lastly, $c_{p}=3$ since $t^{2}+a_{3,1}t-a_{6,2}\equiv t\left(  t+a_{3,1}%
\right)  \ \operatorname{mod}3$ by \eqref{TableforC3vals}.

\qquad\textbf{Subcase 2b}. Let $p$ be any prime and $v_{p}\!\left(
a\right)  \equiv1\ \operatorname{mod}3$. By \eqref{TableforC3vals}, $F_{C_3,1}$ satisfies the first six steps of Tate's Algorithm. In fact, it proceeds to Step 8 since $t^{3}+a_{2,1}t^{2}%
+a_{4,2}t+a_{6,3}\equiv t^{3}\ \operatorname{mod} p$. By \eqref{TableforC3vals}, $t^{2}+a_{3,2}t-a_{6,4}\equiv t\left(  t+a_{3,2}\right)
\ \operatorname{mod}3$ has distinct roots in $\mathbb{F}_{p}$. Hence $E_{C_3}$ has Kodaira-N\'{e}ron type $\mathrm{IV}^{\ast}$ at $p$ with
$c_{p}=3$. The claimed conductor exponent $f_{p}$ in Table~\ref{TableforC3} is now
verified since $f_{p}=v_{p}\!\left(  \Delta\right)-6$.

\qquad\textbf{Subcase 2c.} Suppose $v_{3}\!\left(  a-27b\right)  =5$. Then, the Weierstrass model $F_{C_3,1}$ satisfies $v_{3}\!\left(b_{8}\right)  \geq3$ and $v_{3}\!\left(  b_{6}\right)  =2$, and by Tate's Algorithm, $E_{C_3}$ has Kodaira-N\'{e}ron type
IV at $p$ with $f_{3}=3$. Next, observe that
$t^{2}+a_{3,1}t-a_{6,2}\equiv t^{2}-3^{-8}
a\left(  a-27b\right) \ \operatorname{mod}3$. In
particular, this polynomial splits in $\mathbb{F}_{3}$ if and only if
$3^{-8}
a\left(  a-27b\right)\equiv1\ \operatorname{mod}3$ which gives the claimed~$c_3$.

\textbf{Case 3.} Suppose $v_{3}\!\left(  a-27b\right)  \geq6$ and set
$n=v_{3}\!\left(  a-27b\right)  -6$. Then $v_{3}\!\left(  a\right)  =3$ and
thus $v_{3}\!\left(  \Delta\right)  =n+6$. Next, let $a_{j}$ denote the
Weierstrass coefficients of $F_{C_3,2}$. Then%
\begin{equation}
\left(  v_{3}\!\left(  a_{1}\right)  ,v_{3}\!\left(  a_{2}\right)
,v_{3}\!\left(  a_{3}\right)  ,a_{3}\!\left(  a_{4}\right)  ,v_{3}\!\left(
a_{6}\right)  \right)  =\left(  1,1,n+3,n+4,n+3\right)  . \label{valsC3In}%
\end{equation}

When $n=0$, $E_{C_3}$ satisfies the first six steps of Tate's Algorithm. Since $t^{3}+a_{2,1}t^{2}+a_{4,2}%
+a_{6,3}\equiv t^{3}-t^{2}+3^{-9}a\left(
a-27b\right)\ \operatorname{mod}3$ has distinct roots
over $\overline{\mathbb{F}}_{3}$, $E_{C_3}$ has Kodaira-N\'{e}ron type
$\mathrm{I}_{0}^{\ast}$ at $3$ with $f_{3}=2$. For $c_{3}$, 
we note that
$t^{3}-t^{2}+3^{-9}a\left(
a-27b\right)\ \operatorname{mod}3$ has a root in $\mathbb{F}_{3}$ if
and only if $3^{-9}a\left(
a-27b\right)\equiv2\ \operatorname{mod}3$. In fact, it has exactly one root
in $\mathbb{F}_{3}$ in this case. This gives the claimed~$c_3$.

When $n>0$, by Lemma~\ref{LemmaIn} we conclude that $E_{C_3}$ has Kodaira-N\'{e}ron type $\mathrm{I}_{n}^{\ast}$ at $3$ with $f_{3}=2$. It remains to show the claimed local Tamagawa number $c_{3}$. We observe that
\[%
\begin{array}
[c]{rll}%
t^{2}+a_{3,\frac{n+3}{2}}t-a_{6,n+3} & \equiv t^{2}-3^{-\left(  n+9\right)  } a\left(  a-27b\right)%
\ \operatorname{mod}3 & \text{if }n\text{ is odd,}\\
a_{2,1}t^{2}+a_{4,\frac{n+4}{2}}t+a_{6,n+3} & \equiv-t^{2}+3^{-\left(  n+9\right)  } a\left(  a-27b\right)%
\ \operatorname{mod}3 & \text{if }n\text{ is even.}%
\end{array}
\]
In both instances, the polynomial modulo $3$ splits in
$\mathbb{F}_{3}$ if and only if $3^{-\left(  n+9\right)  } a\left(  a-27b\right)\equiv1\ \operatorname{mod}3$. The claimed $c_{3}$ now follows.
\end{proof}


\subsection{Case of \texorpdfstring{$4$}{}-torsion}
We now consider the family of elliptic curves $E_{C_4}=E_{C_4}(a,b)$, where $a,b$ are relatively prime integers with $a$ positive. Then by Proposition~\ref{rationalmodels}, $E_{C_4}$ parameterizes all rational elliptic curves that have a $4$-torsion point.

\begin{theorem}
\label{ThmforC4}
For the family of elliptic curves $E_{C_4}=E_{C_4}(a,b)$, write $a=c^{2}d$ where $d$ is a nonzero squarefree integer. Then $E_{C_4}$ has additive reduction at a prime $p$ if and only if  the parameters of $E_{C_4}$ satisfy one of the conditions listed in Table~\ref{TableforC4}, and the Kodaira-N\'{e}ron type at $p$,
conductor exponent $f_{p}$, and local Tamagawa number $c_{p}$ are given as follows.
\end{theorem}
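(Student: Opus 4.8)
The plan is to follow the template already used in the proofs of Theorems~\ref{ThmforC30} and~\ref{ThmforC3}, specialized to the $C_4$ model $E_{C_4}:y^2+axy-a^2by=x^3-abx^2$ of Table~\ref{ta:ETmodel}. The biconditional is the easy half: by \cite[Theorem~7.1]{Barrios2020} the primes of additive reduction of $E_{C_4}$ are precisely those encoded by the conditions of Table~\ref{TableforC4}, so it remains to compute the Kodaira--N\'eron type, $f_p$, and $c_p$ case by case. Writing $a=c^2d$ with $d$ squarefree, Lemma~\ref{Lemma for minimal disc}(3) tells me that the change of variables $x\longmapsto c^2x$, $y\longmapsto c^3y$ yields a minimal model over $\mathbb{Q}_p$, with invariants
\[
c_4=d^2(a^2+16ab+16b^2),\quad c_6=-d^3(a^3+24a^2b+120ab^2-64b^3),\quad \Delta=c^2d^7b^4(a+16b).
\]
From these I read off the triplet $\left(v_p(c_4),v_p(c_6),v_p(\Delta)\right)$ in every case and feed it to Lemma~\ref{LemmaPap}.

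For an odd prime $p$ of additive reduction one has $p\mid d$, which forces $v_p(a)$ odd; since $a^2+16ab+16b^2\equiv16b^2$ and $a^3+24a^2b+120ab^2-64b^3\equiv-64b^3 \pmod p$ are units, a direct computation gives the triplet $\left(2,3,6+v_p(a)\right)$. Hence by Lemma~\ref{LemmaPap} (Table~\ref{ta:PapTableIV}, and the extra congruence of Table~\ref{ta:PapTableIandII} when $p=3$) the type is $\mathrm{I}_n^*$ with $n=v_p(a)$ and $f_p=2$; in particular $p=3$ is not structurally special here. To pin down $c_p$ I would translate the singular point to obtain a model satisfying the valuation hypotheses of Lemma~\ref{LemmaIn}, and—because $n$ is odd—examine whether $t^2+a_{3,\frac{n+3}{2}}t-a_{6,n+3}$ splits over $\mathbb{F}_p$. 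Its discriminant is a nonzero square times a simple expression in $a,b,d$, so $c_p\in\{2,4\}$ is governed by the corresponding Legendre symbol, exactly as in the odd-prime analysis of Theorem~\ref{ThmforC2podd}.

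The bulk of the work, and the main obstacle, is $p=2$, where Lemma~\ref{LemmaPap} warns that the triplet alone does not determine the type and Tate's Algorithm must be pushed further. Here I would introduce a family of minimal models $F_{C_4,i}$, each obtained from $E_{C_4}$ by an explicit admissible change of variables tailored to the congruence classes of $a$ and $b$ modulo powers of $2$, and tabulate the $2$-adic valuations of $a_1,\dots,a_6$ and of $\Delta$. For the small types $\mathrm{II},\mathrm{III},\mathrm{IV}$ I would verify Steps~1--6 of Tate's Algorithm and factor $P(t)=t^3+a_{2,1}t^2+a_{4,2}t+a_{6,3}\pmod 2$, reading $c_2$ off the factorization; for the $\mathrm{I}_n^*$ types I would apply Lemma~\ref{LemmaIn} directly, determining $n$ from $v_2(\Delta)$ and the exact valuations, setting $f_2=v_2(\Delta)-4-n$, and extracting $c_2\in\{2,4\}$ from the parity of $a_{6,n+3}$ (equivalently, whether the relevant quadratic splits over $\mathbb{F}_2$). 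The delicate point, just as in Theorem~\ref{ThmC2at2}, is that distinct congruence conditions on $a,b$ can collapse to the same triplet yet yield different $n$, $f_2$, or $c_2$; separating them requires tracking the coefficients of $F_{C_4,i}$ to sufficiently high $2$-adic precision, and I expect to certify the reported valuations by the supplementary computation rather than by hand.
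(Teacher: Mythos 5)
Your overall strategy is the one the paper follows: Lemma~\ref{Lemma for minimal disc}(3) for the minimal model, the triplet $\left(v_p(c_4),v_p(c_6),v_p(\Delta)\right)$ fed into Lemma~\ref{LemmaPap}, and then auxiliary models $F_{C_4,i}$ plus Lemma~\ref{LemmaIn} for the cases the triplet does not settle; your formulas for $c_4$, $c_6$, $\Delta$ and the odd-prime triplet $(2,3,6+v_p(a))$ are all correct. But there is a substantive error in your odd-prime Tamagawa analysis. You predict that $c_p\in\{2,4\}$ will be ``governed by the corresponding Legendre symbol, exactly as in the odd-prime analysis of Theorem~\ref{ThmforC2podd}.'' That cannot be right, because the theorem you are proving asserts $c_p=4$ unconditionally for odd $p$. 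The reason no Legendre symbol appears is visible in the paper's valuation table: after translating the singular point one gets $v_p(a_3)=\tfrac{n+3}{2}$ exactly while $v_p(a_6)=n+5>n+3$, so the quadratic of Lemma~\ref{LemmaIn} reduces to $t^2+ut\equiv t(t+u)\ \operatorname{mod}p$ with $u$ a unit; it always has two distinct roots in $\mathbb{F}_p$, hence $c_p=4$. Your framework would eventually discover this, but as written you are planning to prove a conditional statement that contradicts the claimed table, and you never verify that the ``simple expression'' in your discriminant is in fact a unit square.

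The $p=2$ case, which you correctly identify as the bulk of the work, is left entirely as an outline: you do not exhibit the admissible changes of variables $F_{C_4,i}$, the valuation table, or the congruence computation that separates $c_2=2$ from $c_2=4$ when $n=v_2(a+16b)-4\geq2$ (in the paper this requires writing $a+16b=8l^2k$, $c=4t$, $l=4j$ and reducing $a_6/(d^4l^2kt^2)$ modulo $8$ to extract the condition $d(a+16b)\equiv 2^{n+4}$ or $3\cdot 2^{n+4}\ \operatorname{mod}2^{n+6}$). Deferring the valuations to a machine check is consistent with the paper's own practice, but the mod-$8$ analysis producing the stated $c_2$ congruences is genuine mathematical content that your proposal does not supply, so as it stands the proof of the $p=2$ rows of Table~\ref{TableforC4} is incomplete.
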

{\begingroup
\renewcommand{\arraystretch}{1.1}
 \begin{longtable}{ccccc}
	\hline
	$p$ & K-N type & Conditions on $a,b$ & $f_{p}$ & $c_{p}$ \\
	\hline

	\endfirsthead
	\hline
	$p$ & K-N type & Conditions on $a,b$ & $f_{p}$ & $c_{p}$ \\
	\hline
	\endhead
	\hline

	\multicolumn{3}{r}{\emph{continued on next page}}
	\endfoot
	\hline
	\caption{Local data for $E_{C_4}(a,b)$}\label{TableforC4}
	\endlastfoot

$>2 $ & $ \mathrm{I}_{n}^{\ast} $ & $ n=v_{p}\!\left(  a\right)  \text{ is
odd} $ & $ 2 $ & $ 4$\\\midrule
$2 $ & $ \mathrm{III} $ & $ v_{2}\!\left(  a\right)  =2 $ & $ 3 $ & $ 2$\\\cmidrule{2-5}
& $\mathrm{I}_{0}^{\ast} $ & $ v_{2}\!\left(  a+16b\right)  =5 $ & $ 5 $ & $ 2$\\\cmidrule{2-5}
& $\mathrm{I}_{n}^{\ast} $ & $ n=v_{2}\!\left(  a\right)  =1 $ & $ 3 $ & $ 4$\\\cmidrule{3-5}
&  & $n=v_{2}\!\left(  a\right)  =3 $ & $ 5 $ & $ 4$\\\cmidrule{3-5}
&  & $n=v_{2}\!\left(  a\right)  \geq5\text{ odd} $ & $ 6 $ & $ 4$\\\cmidrule{3-5}
&  & $n=v_{2}\!\left(  a\right)  -4\geq2\text{ is even} $ & $ 4 $ & $ 4$\\
&  & \ \ \ $\text{ and }bd\equiv1\ \operatorname{mod}4$ &  & \\\cmidrule{3-5}
&  & $n=v_{2}\!\left(  a+16b\right)  -4=3 $ & $ 4 $& \multicolumn{1}{l}{$2\text{ if
}d\left(  a+16b\right)  \equiv2^{7}\ \operatorname{mod}2^{9}$}\\
&  &  &  & \multicolumn{1}{l}{$4\text{ if }d\left(  a+16b\right)  \equiv
3\cdot2^{7}\ \operatorname{mod}2^{9}$}\\\cmidrule{3-5}
&  & $n=v_{2}\!\left(  a+16b\right)  -4\geq2 $ & $ 4 $ & \multicolumn{1}{l}{$2\text{
if }d\left(  a+16b\right)  \equiv3\cdot2^{n+4}\ \operatorname{mod}2^{n+6}$}\\
&  & $\text{with }n\neq3$ &  & \multicolumn{1}{l}{$4\text{ if }d\left(
a+16b\right)  \equiv2^{n+4}\ \operatorname{mod}2^{n+6}$}\\\cmidrule{2-5}
& $\mathrm{III}^{\ast}$ & $v_{2}\!\left(  a\right)  =6,\ bd\equiv
3\ \operatorname{mod}4$ & $3$ & $2	$
\end{longtable}
\endgroup}
\vspace{-1em}
\begin{proof}
By \cite[Theorem 7.1]{Barrios2020}, $E_{C_4}$ has additive reduction at a
prime $p$ if and only if $\left(  i\right)  $ $p$ is an odd prime dividing $d$, $\left(  ii\right)  $ $p=2$ and $v_{2}(a)\geq1$ is odd,  $\left(  iii\right)$ $p=2$ and $v_{2}(a)=2,4,6$, or $\left(  iv\right)$ $p=2$ and $v_2(a)\geq8$ is even with $bd \equiv 1 \operatorname{mod}4$. This is equivalent to the conditions listed in
Table~\ref{TableforC4}. By Lemma~\ref{Lemma for minimal disc}, the minimal discriminant $\Delta$ 
and the invariants $c_{4}$, $c_{6}$ associated to a minimal model of
$E_{C_4}$ over $\mathbb{Q}_p$ are%
\begin{align*}
v_{p}\!\left(  c_{4}\right)   &  =2v_{p}\!\left(  d\right)  +v_{p}\!\left(
a^{2}+16ab+16b^{2}\right), \\
v_{p}\!\left(  c_{6}\right)   &  =3v_{p}\!\left(  d\right)  +v_{p}\!\left(
a+8b\right)  +v_{p}\!\left(  -a^{2}-16ab+8b^{2}\right), \\
v_{p}\!\left(  \Delta\right)   &  =v_{p}\!\left(  a\right)  +6v_{p}\!\left(
d\right)  +v_{p}\!\left(  a+16b\right)  .
\end{align*}
Throughout this proof we consider $\mathbb{Q}$-isomorphic elliptic curves
$F_{C_4,i}$ attained from $E_{C_4}$ via the admissible change of variables
$x\longmapsto c^{2}x+r_{i}$ and $y\longmapsto c^{3}y+c^{2}s_{i}x+w_{i}$
where $r_{i},s_{i},w_{i}$ are defined in \eqref{admissible change c4} and
the valuations of the Weierstrass coefficients $a_{j}$ of $F_{C_4,i}$ are given in~\eqref{TableforC4vals}.
\begin{equation}\small{
\renewcommand{\arraystretch}{1.2}
\renewcommand{\arraycolsep}{.17cm}
\begin{array}
[c]{ccccc}\toprule
i & r_{i} & s_{i} & w_{i} & \text{Notes}\\\toprule
1  & 0 & 0 & c^4d^{4} & \\\midrule
2 & \frac{c^{4}d^{2}}{128l}\left(  dm-16l\right)  & \frac{-cd}{128l}%
(c^{2}d^{2}m+ & \frac{-c^{4}d^{2}}{256l}(c^{2}d^{2}m-2c^{3}d^{2}- &
l=2^{\left(  v_{2}\left(  c^{2}d+16b\right)  -3\right)  /2},\\
&   & 16l\left(  m-8b+4c\right)  ) & 16c^{2}dl-32bcd-128bl) & m=c^{2}%
d+16b\\\midrule
3  & \frac{c^{4}d^{2}}{64}(  \frac{d^{4}c^{2}m^{2}}{256l^{2}}
& \frac{-cd}{16}(16b+8c+ & -\frac{c^{4}d^{2}}{128}(-4c^{2}d-4m+ &
l=2^{v_{2}\left(  c^{2}d+16b\right)  /2-2},\\
& -8)  & 2c^{2}d+\frac{c^{2}d^{2}m}{8l}) & \frac{c^{4}d^{5}m^{2}}{256l^{2}%
}-\frac{cdm}{l}) & m=c^{2}d+16b\\\midrule
4  & c^{3} & cb & c^{5} & \\
\bottomrule &  &  &  &  
\end{array}
\label{admissible change c4}}
\end{equation}

\textbf{Case 1.} Suppose that the parameters of $E_{C_4}$ satisfy one of
the conditions in Table~\ref{TableforC4} for the Kodaira-N\'{e}ron type $\mathrm{III}$ or
$\mathrm{III}^{\ast}$. If $v_{2}(d)=0,\,v_{2}(a)=2$, then 
$v_{2}\!\left(  c_{4}\right)=5  ,v_{2}\!\left(  c_{6}\right)=5, \text{ and }v_{2}\!\left(\Delta\right)=4$ and by Table~\ref{ta:PapTableIV},
$E_{C_4}$ has Kodaira-N\'{e}ron type $\mathrm{II}$ or $\mathrm{III}$ at $2$. By
\eqref{TableforC4vals}, the model $F_{C_4,1}$ satisfies the
first four steps of Tate's Algorithm. Since $v_{2}\!\left(  a_{6}\right)  =2$,
we conclude $E_{C_4}$ has Kodaira-N\'{e}ron type $\mathrm{III}$ at $2$ with $\left(
c_{2},f_{2}\right)  =\left(  2,3\right)  $. Next, suppose $v_{2}\!\left(
a\right)  =6$ with $bd\equiv3\ \operatorname{mod}4$. Then 
$v_{2}(c_{4})=4,v_{2}(c_{6})=6,\text{ and }v_{2}(\Delta)=10$
and by
Table~\ref{ta:PapTableIV} the Kodaira-N\'{e}ron type of $E_{C_4}$ at $2$ is
$\mathrm{I}_{2}^{\ast}$ or $\mathrm{III}^{\ast}$. By
\eqref{TableforC4vals}, the model $F_{C_4,4}$ satisfies the
first six steps of Tate's Algorithm. Since $t^{3}+a_{2,1}t^{2}+a_{4,2}%
t+a_{6,3}\equiv t^{3}\operatorname{mod}2$, Tate's Algorithm
proceeds to Step 8 and thus $E_{C_4}$ has Kodaira-N\'{e}ron type $\mathrm{III}^{\ast}$
at $2$ with $\left(  c_{2},f_{2}\right)  =\left(  2,3\right)  $.
\begin{equation}{
\scalebox{0.97}{
\renewcommand{\arraystretch}{1.02}
\renewcommand{\arraycolsep}{.14cm}%
$\begin{array}
[c]{cccccccc}%
\toprule p & F_{C_4,i} & \text{Conditions on }a,b & v_{p}\!\left(  a_{1}\right)
& v_{p}\!\left(  a_{2}\right)   & v_{p}\!\left(  a_{3}\right)   &
v_{p}\!\left(  a_{4}\right)   & v_{p}\!\left(  a_{6}\right)  \\
\toprule\geq2 & F_{C_4,1} & n=v_{p}\!\left(  a\right)  \text{ is odd} & \geq1 &
1 & \frac{n+3}{2} & n+4 & n+5\\\midrule
2 & F_{C_4,1} & v_{2}\!\left(  a\right)  =2 & 1 & 0 & 1 & 2 & \geq3\\\cmidrule{2-8}
& F_{C_4,2} & v_{2}\!\left(  a+16b\right)  =5 & 1 & 1 & 2 & 2 & \geq
4\\\cmidrule{3-8}
&  & n=v_{2}\!\left(  a+16b\right)  -4\geq3\text{ is odd} & 1 & 1 & \frac
{n+3}{2} & n+1 & \geq n+3\\\cmidrule{2-3}\cmidrule{2-8}
& F_{C_4,3} & n=v_{2}\!\left(  a+16b\right)  -4\geq2\text{ is even} & 1 & 1 &
\frac{n+4}{2} & \frac{n+4}{2} & \geq n+3\\
\cmidrule{2-8}
& F_{C_4,4} & n=v_{2}\!\left(  a\right)  -4\geq2\text{ is even}, & 1 & 1 & \frac{n+4}{2} & \frac{n+4}{2} &
n+4\\

&  &   bd\equiv
1\ \operatorname{mod}4  & & &  &  \\\cmidrule{3-8}
&  & v_{2}\!\left(  a\right)  =6,\ bd\equiv3\ \operatorname{mod}4 & 1 &
\geq2 & 3 & 3 & 6\\
\bottomrule &  &  &  &  &  &  &
\end{array}$}
\vspace{-0.5em}
\label{TableforC4vals}}
\end{equation}

\textbf{Case 2.} Suppose $v_{2}\!\left(  a+16b\right)  =5$ so that
$v_{2}\!\left(  \Delta\right)  =9$. By (\ref{TableforC4vals}), the
model $F_{C_4,2}$ satisfies the first six steps of Tate's Algorithm and
$t^{3}+a_{2,1}t^{2}+a_{4,2}t+a_{6,3}\equiv t\left(  t^{2}+t+1\right)
\ \operatorname{mod}2$. Thus $E_{C_4}$ has Kodaira-N\'{e}ron type $\mathrm{I}%
_{0}^{\ast}$ at $2$ with $\left(  c_{2},f_{2}\right)  =\left(  2,5\right)  $.

\textbf{Case 3.} Suppose that the parameters of $E_{C_4}$ satisfy one of
the conditions in Table~\ref{TableforC4} for the Kodaira-N\'{e}ron type $\mathrm{I}%
_{n}^{\ast}$ with $n>0$. Applying Lemma~\ref{LemmaIn} to the corresponding models $F_{C_4,i}$, we have by  (\ref{TableforC4vals})
that $E_{C_4}$ has Kodaira-N\'{e}ron type $\mathrm{I}_{n}^{\ast}$ at $p$ for
the claimed $n$. Next, observe that $v_{2}\!\left(  \Delta\right)  =n+8$
when $v_{2}\!\left(  a\right)  $ is even and $v_{p}\!\left(  \Delta\right)
=n+6+v_{p}\!\left(  a+16b\right)  $ when $n=v_{p}\!\left(  a\right)  $ is odd.
This gives the claimed conductor exponent $f_{p}$. For the local Tamagawa
number $c_{p}$, it follows from (\ref{TableforC4vals}) and Lemma~\ref{LemmaIn}
that $c_{p}=4$ whenever $v_{p}\!\left(  a\right)  $ odd or $v_{2}\!\left(
a\right)  \geq6$ is even with $bd\equiv1\ \operatorname{mod}4$. It remains to
show the cases corresponding to $n=v_{2}\!\left(  a+16b\right)  -4\geq2$. For
these cases, (\ref{TableforC4vals}) implies
\[
\left.
\begin{array}
[c]{rl}%
t^{2}+a_{3,\frac{n+3}{2}}t+a_{6,n+3} & \text{if }n\text{ is odd}\\
a_{2,1}t^{2}+a_{4,\frac{n+4}{2}}t+a_{6,n+3} & \text{if }n\text{ is even}%
\end{array}
\right\}  \equiv t^{2}+t+a_{6,n+3}\ \operatorname{mod}2.
\]
By Lemma~\ref{LemmaIn}, the local Tamagawa number $c_{2}$ depends
on the parity of $a_{6,n+3}$. We now demonstrate the proof for the case when
$n\geq3$ is odd and note that the case when $n\geq2$ is even follows by a
similar argument. To this end, set $a+16b=8l^{2}k$ where $l$ is a power of $2$
and $k$ is odd. Consequently, $n=2v_{2}\!\left(  l\right)  -1$ and
$v_{2}\!\left(  l\right)  \geq2$. These assumptions imply that $c=4t$ and
$l=4j$ for some integers $t$ and $j$ with $t$ odd. Then the Weierstrass
coefficient $a_{6}$ of $F_{T,2}$ is%
\begin{align*}
&\frac{a_{6}}{d^{4}l^{2}kt^{2}}= 4d^{5}jk^{2}t^{4}-8d^{3}j^{2}k^{2}%
t^{2}-d^{4}kt^{4}-8d^{2}jkt^{2}-d^{2}kt^{2}+16j^{2}k+2dt^{2}  \\
&\equiv\left\{
\begin{array}
[c]{rl}%
4+2\left(  d-k\right)  \ \operatorname{mod}8 & \text{if }j\text{ is odd,}\\
2\left(  d-k\right)  \ \operatorname{mod}8 & \text{if }j\text{ is even,}%
\end{array}
\right. \\
 &  \equiv\left\{
\begin{array}
[c]{cl}%
0\ \operatorname{mod}8 & \text{if }\left(  i\right)  \ kd\equiv
1\ \operatorname{mod}4\text{ and }j\text{ is even or }\left(  ii\right)
\ kd\equiv3\ \operatorname{mod}4\text{ and }j\text{ is odd,}\\
4\ \operatorname{mod}8 & \text{if }\left(  i\right)  \ kd\equiv
1\ \operatorname{mod}4\text{ and }j\text{ is odd or }\left(  ii\right)
\ kd\equiv3\ \operatorname{mod}4\text{ and }j\text{ is even.}%
\end{array}
\right.
\end{align*}
The claimed $c_{2}$ now follows since~$k=\frac
{a+16b}{2^{n+4}}$.
\end{proof}


\subsection{Case of full \texorpdfstring{$2$}{}-torsion} Next, we consider the family of elliptic curves $E_{C_2 \times C_2}=E_{C_2 \times C_2}(a,b)$, where $a,b$ are relatively prime integers with $a$ even and positive. Then by Proposition~\ref{rationalmodels}, $E_{C_2 \times C_2}$ parameterizes all rational elliptic curves that have full $2$-torsion.

\begin{theorem}
\label{ThmC2xC2}
The family of elliptic curves $E_{C_{2} \times C_{2}}$ has additive reduction at a prime $p$ if and only if the parameters of $E_{C_{2} \times C_{2}}$ satisfy one of the conditions listed in Table \ref{Table for C2xC2}, and the Kodaira-N\'{e}ron type,
conductor exponent~$f_{p}$, and local Tamagawa number~$c_{p}$ at $p$ are as follows.
\end{theorem}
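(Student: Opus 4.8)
The plan is to follow the template established in the proofs of Theorems~\ref{ThmC2at2}--\ref{ThmforC4}. First I would invoke \cite[Theorem~7.1]{Barrios2020} for $E_{C_2\times C_2}$ to determine the primes of additive reduction and verify that this characterization coincides with the list of parameter conditions recorded in Table~\ref{Table for C2xC2}. By Lemma~\ref{Lemma for minimal disc}(1), the model of Table~\ref{ta:ETmodel} is already a minimal model over $\mathbb{Q}_p$ at every such prime, so no preliminary minimalization is required. From the coefficients $a_2=d(a+b)$ and $a_4=abd^2$ (with $a_1=a_3=a_6=0$) one computes
\[
c_4=16d^2(a^2-ab+b^2),\qquad c_6=-32d^3(a+b)(2a-b)(a-2b),\qquad \Delta=16a^2b^2d^6(a-b)^2,
\]
the factorization of $\Delta$ reflecting that the three $2$-torsion points $x=0,-ad,-bd$ are $\mathbb{Q}$-rational. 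These give the triplet $(v_p(c_4),v_p(c_6),v_p(\Delta))$ that feeds into Lemma~\ref{LemmaPap}.

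I would then treat odd primes and $p=2$ separately. At an odd prime, additive reduction forces $p\mid d$: since $\gcd(a,b)=1$, any odd $p$ dividing $a$, $b$, or $a-b$ leaves $v_p(c_4)=0$, so the reduction is multiplicative there. As $d$ is squarefree, $p\mid d$ yields $v_p(c_4)=2$, $v_p(c_6)=3$, and $v_p(\Delta)=6+2v_p(a-b)$, so Table~\ref{ta:PapTableIandII} gives type $\mathrm{I}_0^*$ when $p\nmid(a-b)$ and type $\mathrm{I}_n^*$ with $n=2v_p(a-b)$ (always even) otherwise. For $n>0$ I would apply Lemma~\ref{LemmaIn} after centering the singular point by an admissible change of variables, and read off $c_p$ from whether the quadratic \eqref{LemmaInpoly} splits over $\mathbb{F}_p$. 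Since the full $2$-torsion is $\mathbb{Q}$-rational and $n$ is even, I expect this quadratic to split, giving $c_p=4$ throughout the odd case, in contrast with the Legendre-symbol dichotomy that arose for $E_{C_2}$ in Theorem~\ref{ThmforC2podd}.

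The hard part will be $p=2$. Here $a$ is even and $b$ odd, and Lemma~\ref{LemmaPap} warns that the triplet alone does not determine the Kodaira-N\'{e}ron type, so I would introduce auxiliary minimal models $F_{C_2\times C_2,i}$ obtained by admissible changes of variables, tabulate the $2$-adic valuations of their coefficients (and of $\Delta$) under each regime of $v_2(d)$ and the residues of $a,b,d$ modulo powers of $2$, and run Tate's Algorithm to the step prescribed by Table~\ref{ta:PapTableIV}. For fibers of type $\mathrm{I}_n^*$ I would again invoke Lemma~\ref{LemmaIn}, so that the type and $f_2$ reduce to valuation counts and the Tamagawa number reduces to the parity of the coefficient $a_{6,n+3}$.

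The genuine obstacle is computing $c_2$ in the ambiguous $\mathrm{IV}$, $\mathrm{IV}^*$, and $\mathrm{I}_n^*$ cases: there the splitting of the relevant quadratic over $\mathbb{F}_2$, equivalently the parity of $a_{6,n+3}$, is controlled by congruences of $a,b,d$ modulo high powers of $2$. Extracting the precise residue classes — by writing $a=2^k\cdot(\text{odd})$, splitting on the parity of $d$, and reducing the coefficient $a_6$ of the appropriate $F_{C_2\times C_2,i}$ modulo $2^{n+6}$ — is the delicate bookkeeping that will occupy most of the argument, exactly as in Case~3 of the proof of Theorem~\ref{ThmC2at2} (see \eqref{Tableforc2valforInstar} and \eqref{Instara6}). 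I would organize the work into subcases indexed by $v_2(d)$ and the residues of $a,b,d$, verify each displayed valuation table (with the SageMath computations of \cite{LDWebpage}), and match every regime to its row in Table~\ref{Table for C2xC2}.
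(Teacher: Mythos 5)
Your proposal follows essentially the same route as the paper: additive reduction from \cite[Theorem~7.1]{Barrios2020}, minimality from Lemma~\ref{Lemma for minimal disc}, the triplet $(v_p(c_4),v_p(c_6),v_p(\Delta))$ fed into Lemma~\ref{LemmaPap}, auxiliary models plus Lemma~\ref{LemmaIn} for the $\mathrm{I}_n^{\ast}$ fibers, and residue bookkeeping for $c_2$. One slip in the odd-prime case: since $\gcd(a,b)=1$ only forces at most one of $a,b,a-b$ to be divisible by $p$, the prime $p\mid d$ may also divide $a$ or $b$, so $v_p(\Delta)=6+2v_p\!\left(ab(a-b)\right)$ and the correct index is $n=2v_p\!\left(ab(a-b)\right)$, not $n=2v_p(a-b)$; your own factorization $\Delta=16a^2b^2d^6(a-b)^2$ already contains the missing factors, and the conclusion that $n$ is even with $c_p=4$ is unaffected.
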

{\begingroup
	\renewcommand{\arraystretch}{1}
	\begin{longtable}{ccccc}
		\hline
$p$  & \text{K-N} & \text{Conditions on  }$a,b,d$  &  $f_{p}$  &  $c_{p}$ \\
		  & \text{type} &  &  & \\
		\hline
		
		\endfirsthead
		
		\hline
	$p$  & \text{K-N} & \text{Conditions on  }$a,b,d$  &  $f_{p}$  &  $c_{p}$ \\
		 & \text{type} &  &  & \\
		\hline
		\endhead
		\hline
		
		\multicolumn{5}{r}{\emph{continued on next page}}
		\endfoot
		\hline
		\caption{Local data for $E_{C_{2}\times C_{2}}(a,b,d)$} \label{Table for C2xC2}
		\endlastfoot
$\neq2$ & $\mathrm{I}_{0}^{\ast}$ & $v_{p}(d)=1,\ v_{p}(ab(a-b))=0$ & $2$ &$4$\\\cmidrule{2-5}
&$ \mathrm{I}_{n}^{\ast} $& $v_{p}(d)=1,$ & $2$ & $4$\\
&  & $n=2v_{p}(ab(a-b))\geq2 4$& & \\\midrule
$2$ &$ \mathrm{III} $& $v_{2}(d)=0,\,v_{2}(a)=1$ & $5$ & $2$\\\cmidrule{2-5}
&$ \mathrm{I}_{0}^{\ast}$ &$ v_{2}(a)=2,\,bd\equiv3\operatorname{mod} 4$ & $2$ & $2$\\\cmidrule{2-5}
&$ \mathrm{I}_{1}^{\ast} $& $v_{2}(a)=2,\,bd\equiv1\operatorname{mod} 4$ & $3$ & \multicolumn{1}{l}{$2\text{ if }bd\equiv1\operatorname{mod}8,ad\equiv
	12\operatorname{mod}16\text{ or}$}\\
&  &  &  & \multicolumn{1}{l}{$\hspace{1.8em}bd\equiv5\operatorname{mod}%
	8,ad\equiv4\operatorname{mod}16$}\\\cmidrule{5-5}
&  &  &  & \multicolumn{1}{l}{$4\text{ if }bd\equiv1\operatorname{mod}%
	8,ad\equiv4\operatorname{mod}16\text{ or}$}\\
&  &  &  & \multicolumn{1}{l}{\hspace{1.8em} $bd\equiv5\operatorname{mod}
	8,ad\equiv12\operatorname{mod}16$}\\\cmidrule{2-5}
&$ \mathrm{I}_{n}^{\ast} $& $v_{2}(d)=1,\,n=2v_{2}(a)\geq2$ & $6$ & $4$\\\cmidrule{3-5}
&  & $n=2v_{2}(a)-4\geq2, bd\equiv3\operatorname{mod}4 $& $4$ & $4$\\\cmidrule{2-5}
&$ \mathrm{III}^{\ast} $& $v_{2}(a)=3,\,bd\equiv1\operatorname{mod}4 $&
$3$ & $2$
	\end{longtable}	\endgroup}
\begin{proof}
It follows from \cite[Theorem 7.1]{Barrios2020} that $E_{C_2 \times C_2}$ has
additive reduction at a prime $p$ if and only if $\left(  i\right)  $ $p$ is an
odd prime dividing $d$, $\left(  ii\right)  $ $p=2$ with $v_{2}\!\left(  a\right)
<4$, or $\left(  iii\right)  $ $p=2$ with $v_{2}\!\left(  a\right)  \geq4$ and
$bd\not \equiv1\ \operatorname{mod}4$. An easy verification shows that this is
equivalent to the listed $p$ and corresponding conditions on $a$ and $b$ given
in Table \ref{Table for C2xC2}. If $E_{C_2 \times C_2}$ has additive reduction at $p$, then $E_{C_2 \times C_2}/\mathbb{Q}_{p}$ is a minimal model by Lemma~\ref{Lemma for minimal disc}. Next, we observe that the
minimal discriminant $\Delta$ of $E_{C_2 \times C_2}$ and the invariants $c_{4}$, $c_{6}$
satisfy
\begin{align*}
v_{p}(c_{4}) &  =v_{p}(16)+2v_{p}(d)+v_{p}(a^{2}-ab+b^{2}),\\
v_{p}(c_{6}) &  =v_{p}(32)+v_{p}(a-2b)+v_{p}(a+b)+v_{p}(2a-b)+3v_{p}(d),\\
v_{p}(\Delta) &  =v_{p}(16)+2v_{p}(a)+2v_{p}(b)+2v_{p}(a-b)+6v_{p}(d).
\end{align*}

\noindent Throughout this proof we consider $\mathbb{Q}$-isomorphic elliptic
curves $F_{C_2 \times C_2,i}$ attained from $E_{C_2 \times C_2}$ via the admissible change of variables
$x\longmapsto x+r_{i}$ and $y\longmapsto y+s_{i}x+w_{i}$ where
$r_{i},s_{i},w_{i}$ are as given in \eqref{admissible change c2xc2}. The Weierstrass
coefficients $a_{j}$ of $F_{C_2 \times C_2,i}$ satisfy the valuations given in \eqref{TableforC2XC2vals}.

\begin{equation}
\renewcommand{\arraystretch}{1.2}
\renewcommand{\arraycolsep}{0.35cm}
\begin{array}
[c]{lllll}%
\toprule i & r_{i} & s_{i} & w_{i}\\
\midrule1  & 4 a^{2} b^{2} d^{2} & b^{2} d & 4 a^{2} b^{2} d^{2}\\
\midrule2  & a^{2}d^{2}-2abd^{2}+b^{2}d^{2}-ad & a^{2}d-2abd+b^{2}d &
a^{2}d^{2}-2abd^{2}+b^{2}d^{2}\\
\midrule3  & 6 & b d & 2b d + 4\\
\bottomrule &  &  &  &
\end{array}
\label{admissible change c2xc2}%
\end{equation}

\vspace{-1em}
\textbf{Case 1.} Suppose that the parameters of $E_{C_2 \times C_2}$ satisfy one of
the conditions in Table \ref{Table for C2xC2} for the Kodaira-N\'{e}ron type $\mathrm{III}$ or
$\mathrm{III}^{\ast}$. If $v_{2}(d)=0,\,v_{2}(a)=1$, then 
$v_{2}\!\left(  c_{4}\right)=4  ,v_{2}\!\left(  c_{6}\right)\geq7, \text{ and }v_{2}\!\left(
\Delta\right)=6$ and by Table~\ref{ta:PapTableIV},
$E_{C_2 \times C_2}$ has Kodaira-N\'{e}ron type $\mathrm{II}$ or $\mathrm{III}$ at $2$. By
\eqref{TableforC2XC2vals}, the model $F_{C_2 \times C_2,1}$ satisfies the
first four steps of Tate's Algorithm. Since $v_{2}\!\left(  a_{6}\right)  =5$,
we conclude that $E_{C_2 \times C_2}$ has Kodaira-N\'{e}ron type $\mathrm{III}$ at $2$ with $\left(
c_{2},f_{2}\right)  =\left(  2,5\right)  $. Next, suppose $v_{2}\!\left(
a\right)  =3$ with $bd\equiv1\ \operatorname{mod}4$. Then 
$  v_{2}(c_{4})=4,v_{2}(c_{6})=6,\text{ and }v_{2}(\Delta)=10$ and by
Table~\ref{ta:PapTableIV} the Kodaira-N\'{e}ron type of $E_{C_2 \times C_2}$ at $2$ is
$\mathrm{I}_{2}^{\ast}$ or $\mathrm{III}^{\ast}$. By
\eqref{TableforC2XC2vals}, the model $F_{C_2 \times C_2,3}$ satisfies the
first six steps of Tate's Algorithm. Since $t^{3}+a_{2,1}t^{2}+a_{4,2}%
t+a_{6,3}\equiv t^{3}\operatorname{mod}2$, Tate's Algorithm
proceeds to Step 8 and thus $E_{C_2 \times C_2}$ has Kodaira-N\'{e}ron type $\mathrm{III}^{\ast}$
at $2$ with $\left(  c_{2},f_{2}\right)  =\left(  2,3\right)  $.

\begin{equation}{
\scalebox{0.92}{
\renewcommand{\arraystretch}{1.1}
\renewcommand{\arraycolsep}{0.07cm}%
$\begin{array}
[c]{ccccccccc}
\toprule
p & F_{C_2 \times C_2,i} & \text{Conditions on }a,b,d & v_{p}\!\left(  a_{1}\right)   &
v_{p}\!\left(  a_{2}\right)   & v_{p}\!\left(  a_{3}\right)   & v_{p}\!\left(
a_{4}\right)   & v_{p}\!\left(  a_{6}\right)   & v_{p}\!\left(  \Delta\right)
\\\toprule
\neq2 & F_{C_2 \times C_2,1} & v_{p}(d)=1,v_{p}(ab(a-b))=0 & 1 & \geq1 & 2 & 2 & \geq4 &
6\\\cmidrule{3-9}
& & v_{p}(d)=1,n=2v_{p}(ab)\geq2 & 1 & 1 & n+2 & \frac
{n+4}{2} & \frac{3n}{2}+4 & n+6\\\cmidrule{2-9}
& F_{C_2 \times C_2,2} & v_{p}(d)=1,n=2v_{p}(a-b)\geq2 & n+1 & 1 & n+2 & \frac{n+4}{2} &
\frac{3n}{2}+4 & n+6\\\midrule
2 & F_{C_2 \times C_2,1} &  v_{2}(d)=0,\,v_{2}(a)=1 & 1 & \geq1 & 5 & 3 &5 & 6 \\\cmidrule{3-9}
&  &  n=2v_{2}(a)\geq2,v_{2}(d)=1 & 2 & 1 & n+5 & \frac{n+4}{2} &\frac{3n}{2} + 6 & n + 10\\\cmidrule{3-9}
&  & n=2v_{2}(a)-4\geq2, bd\equiv3\operatorname{mod}4  & 1 & 1 & n+7 & \frac{n+4}{2} & \frac{3n}{2} + 8 & n+8\\\cmidrule{3-9}
&  & v_{2}(a)=3,\,bd\equiv1\operatorname{mod}4 & 1 & \geq2 & 9 & 3 &11 & 10\\\cmidrule{2-9}
& F_{C_2 \times C_2,3} & v_{2}(a)=2,\,bd\equiv3\operatorname{mod}4  & 1 & \geq2 & 2 & \geq3 & 3 &8 \\\cmidrule{3-9}
&  & v_{2}(a)=2,\,bd\equiv1\operatorname{mod}4 & 1 & 1 & 2 & \geq3 & \geq 4 &8\\
\bottomrule &  & 
\end{array}$}
\label{TableforC2XC2vals}}
\end{equation}
\textbf{Case 2.} Suppose that the parameters of $E_{C_2 \times C_2}$ satisfy one of
the conditions in Table~\ref{TableforC4} for the Kodaira-N\'{e}ron type $\mathrm{I}%
_{0}^{\ast}$. For these two cases, the Weierstrass coefficients $a_{j}$ of the models $F_{C_2 \times C_2,i}$ given in \eqref{TableforC2XC2vals} satisfy the
first six steps of Tate's Algorithm. Next, let $P\!\left(  t\right)
=t^{3}+a_{2,1}t^{2}+a_{4,2}t+a_{6,3}$. Then, one can verify the following:

\[
P\!\left(  t\right)  \equiv\left\{
\begin{array}
[c]{ll}%
t\left(  t^{2}+\frac{(a+b)d}{p}t+\frac{abd^{2}}{p^{2}}\right)
\ \operatorname{mod}p & \text{if }p>2,\ v_{p}\!\left(  d\right)
=1,\ v_{p}\!\left(  ab\left(  a-b\right)  \right)  =0,\\[0.7em]
(t+1)(t^{2}+t+1)\ \operatorname{mod}2 & \text{if }v_{2}\!\left(  a\right)
=2,\ bd\equiv3\ \operatorname{mod}4.
\end{array}
\right.
\]

\vspace{0.5em}
\noindent By Tate's Algorithm, $E_{C_2 \times C_2}$ has Kodaira-N\'{e}ron type $\mathrm{I}%
_{0}^{\ast}$ at $p$ with $f_{p}=v_{p}\!\left(  \Delta\right)  -4$. Moreover,
$c_{2}=2$ if $v_{2}\!\left(  a\right)  =2,\ bd\equiv3\ \operatorname{mod}4$.
If  $p>2,\ v_{p}\!\left(  d\right)  =1,\ v_{p}\!\left(  ab\left(  a-b\right)
\right)  =0$, then the discriminant of $t^{2}+\frac{(a+b)d}{p}t+\frac{abd^{2}%
}{p^{2}}$ is $\frac{(a-b)^{2}d^{2}}{p^{2}}$. In particular, the quadratic
polynomial splits in $\mathbb{F}_{p}$ and thus $c_{p}=4$.

\textbf{Case 3.} Suppose that the parameters of $E_{C_2 \times C_2}$ satisfy one of
the conditions in Table~\ref{TableforC4} for the Kodaira-N\'{e}ron type $\mathrm{I}%
_{n}^{\ast}$ with $n>0$. For these conditions, we consider the models $F_{C_2 \times C_2,i}$ given in \eqref{TableforC2XC2vals}. Then, by Lemma~\ref{LemmaIn}, $E_{C_2 \times C_2}$ has
Kodaira-N\'{e}ron type $\mathrm{I}_{n}^{\ast}$ at $p$ for the claimed $n$. The
conductor exponent now follows from Table~\ref{TableforC4} since $f_{p}%
=v_{p}\!\left(  \Delta\right)  -4-n$. Moreover, by Lemma~\ref{LemmaIn} and
Table~\ref{TableforC4}, we conclude $c_{p}=4$ for each case except possibly when
$v_{2}\!\left(  a\right)  =2$ with $bd\equiv1\ \operatorname{mod}4$. For this
case, we observe that the Weierstrass coefficients of $F_{C_2 \times C_2,3}$ satisfy
\[
t^{2}+a_{3,2}t-a_{6,4}\equiv t^{2}+t+\frac{b^{2}d^{2}-9ad-5bd-50-3abd^{2}/2}{4}\ \operatorname{mod}2.
\]


Note that $b^{2}d^{2}-9ad-5bd-50-\frac{3abd^{2}}{2}\equiv3\left(bd+1-\frac{abd^{2}
}{2}\right)  \ \operatorname{mod}8$ 
is either
$0$ or $4$.
The claimed $c_{2}$ now follows since this is congruent
to $0\ \operatorname{mod}8$ if and only if $\left(  i\right)  \ bd\equiv
1\ \operatorname{mod}8,\ ad\equiv4\ \operatorname{mod}16$ or $\left(
ii\right)  $ $bd\equiv5\ \operatorname{mod}8,\ ad\equiv12\ \operatorname{mod}%
16$.
\end{proof}

\subsection{Remaining torsion subgroups}
Lastly, we consider the families of elliptic curves $E_{T}=E_{T}(a,b)$, where $T=C_2\times C_4, C_2 \times C_6, C_m$ for $m\geq 5$ and $a,b$ are relatively prime integers with $a$ positive. Then by Proposition~\ref{rationalmodels}, $E_{T}$ parameterizes all rational elliptic curves such that $T$ is a subgroup of their torsion subgroup.

\begin{theorem}
\label{ThmotherTs} Consider the family of elliptic curves $E_{T}$ where $T=C_2\times C_4, C_2 \times C_6, C_m$ for $m\geq 5$. Then $E_T$ has additive reduction at a prime $p$ if and only if the parameters of $E_{T}$ satisfy one of the conditions listed in Table~\ref{OtherTs}, and the Kodaira-N\'{e}ron type at $p$,
conductor exponent~$f_{p}$, and local Tamagawa number $c_{p}$ are given as follows.
\end{theorem}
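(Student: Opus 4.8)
The plan is to mirror the template used in Theorems~\ref{ThmforC30}--\ref{ThmC2xC2}, now applied uniformly across the families $E_{T}$ with $T=C_{2}\times C_{4},\,C_{2}\times C_{6}$, and $C_{m}$ for $m\ge5$. First I would invoke \cite[Theorem~7.1]{Barrios2020} for each such $T$ to read off exactly the primes $p$ at which $E_{T}$ has additive reduction, and check by an elementary valuation computation that these coincide with the conditions recorded in Table~\ref{OtherTs}. The crucial structural simplification is that for every family in this list \emph{except} $C_{2}\times C_{4}$, part~(1) of Lemma~\ref{Lemma for minimal disc} guarantees that the model of Table~\ref{ta:ETmodel} is already a minimal model over $\mathbb{Q}_{p}$ at each additive prime; only $E_{C_{2}\times C_{4}}$ requires the auxiliary admissible change of variables of part~(4) at $p=2$ when $v_{2}(a)>1$. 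Thus in all but one family the invariants $c_{4},c_{6},\Delta$ can be computed directly from the Weierstrass coefficients in Table~\ref{ta:ETmodel} via \eqref{basicformulas}.

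Second, for a fixed $T$ and additive prime $p$ I would compute the triplet $\left(v_{p}(c_{4}),v_{p}(c_{6}),v_{p}(\Delta)\right)$ as a function of the parameters and feed it into Lemma~\ref{LemmaPap}, reading the candidate Kodaira--N\'eron types, $f_{p}$, and $c_{p}$ off Tables~\ref{ta:PapTableIV} and~\ref{ta:PapTableIandII}. The work is drastically shortened by the fact that a torsion point of order prime to $p$ injects into the component group of the special fiber, whose order is at most $4$ at additive reduction; hence for $E_{C_{m}}$ with $m\ge5$ additive reduction can occur only at primes dividing $m$, and for $E_{C_{2}\times C_{4}}$ only at $p=2$. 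This is precisely why Table~\ref{OtherTs} is short, and it lets me treat each family as a handful of arithmetic cases indexed by the $p$-adic valuations of the relevant parameters.

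Third, I would resolve the cases where the triplet alone does not pin down the local data. When the candidate type is $\mathrm{I}_{n}^{\ast}$ with $n>0$ -- the generic outcome for $E_{C_{2}\times C_{4}}$ at $2$ and for several of the $p=2,3$ cases of the other families -- I would exhibit an explicit admissible change of variables producing a model $F_{T,i}$ whose coefficients satisfy the valuation hypotheses \eqref{LemmaInval} of Lemma~\ref{LemmaIn}, then determine $n$, deduce $f_{p}=v_{p}(\Delta)-4-n$, and obtain $c_{p}=2$ or $4$ according to whether the quadratic in \eqref{LemmaInpoly} splits over $\mathbb{F}_{p}$. For the remaining additive types II, III, IV, IV$^{\ast}$, III$^{\ast}$ arising at $p=2,3$, I would run Tate's Algorithm \cite{Tate1975} to the step flagged in Table~\ref{ta:PapTableIV}, or at $p=3$ verify the auxiliary congruence condition of Table~\ref{ta:PapTableIandII}; the local Tamagawa number in each such case is then read from the number of $\mathbb{F}_{p}$-rational roots of the relevant quadratic or cubic.

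\textbf{Main obstacle.} I expect the chief difficulty to be the $C_{2}\times C_{4}$ family at $p=2$: because the tabulated model is non-minimal when $v_{2}(a)>1$, confirming that the type is always $\mathrm{I}_{n}^{\ast}$ and extracting the exact $n$, $f_{2}$, and $c_{2}$ forces a careful $2$-adic analysis of the transformed model via Lemma~\ref{LemmaIn}, parallel to Case~3 of the proof of Theorem~\ref{ThmforC4}. The secondary difficulty is the $p=3$ analysis for $C_{6},C_{9},C_{12}$, and $C_{2}\times C_{6}$, where distinguishing II from III and IV from IV$^{\ast}$ (together with their starred analogues) requires the Papadopoulos auxiliary conditions rather than the triplet alone. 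The remaining verifications are lengthy but routine valuation bookkeeping, of the sort already confirmed by SageMath as noted at the start of this section.
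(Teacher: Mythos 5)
Your proposal is sound and its skeleton matches the paper's: additive primes from \cite[Theorem~7.1]{Barrios2020}, minimality from Lemma~\ref{Lemma for minimal disc} (with $C_{2}\times C_{4}$ as the lone exception requiring a change of variables at $2$), triplets fed into Lemma~\ref{LemmaPap}, and Lemma~\ref{LemmaIn} for the $\mathrm{I}_{n}^{\ast}$ cases. Where you diverge is in how the harder subcases are closed. You propose direct Tate-algorithm computations on explicit models throughout, whereas the paper repeatedly exploits containments of torsion subgroups to import local data already established: $E_{C_{6}}$ at $p=2$ is $\mathbb{Q}$-isomorphic to a member of $E_{C_{3}}$ (so $(c_{2},f_{2})=(3,2)$ comes from Theorem~\ref{ThmforC3}); $E_{C_{12}}$ at $p=3$ sits inside $E_{C_{4}}$ (so $c_{3}=4$ comes from Theorem~\ref{ThmforC4}); the $v_{2}(a)\geq3$ and $v_{2}(a+4b)=3$ subcases of $E_{C_{2}\times C_{4}}$ are rewritten as explicit members $E_{C_{2}\times C_{2}}(A,B,1)$ and handled by Theorem~\ref{ThmC2xC2}; and $E_{C_{2}\times C_{6}}$ at $3$ gets $c_{3}=4$ from Corollary~\ref{CorC2xC2Instat}. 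Your direct route would also work but costs more: you would have to construct additional minimal models and verify the splitting of the relevant quadratics by hand, precisely in the subcases you flag as the main obstacles. Conversely, your component-group observation (prime-to-$p$ torsion injects into the component group, which has order at most $4$ at additive reduction, so $E_{C_{m}}$ for $m\geq5$ can only be additive at $p\mid m$ and $E_{C_{2}\times C_{4}}$ only at $p=2$) is a clean conceptual argument absent from the paper; note, though, that it only bounds the set of possible additive primes and does not replace \cite[Theorem~7.1]{Barrios2020} for the precise conditions (e.g., $E_{C_{10}}$ and $E_{C_{12}}$ are never additive at $2$ even though $2\mid 10,12$). Neither approach has a gap; the paper's is shorter, yours is more self-contained.
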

{\begingroup
		\renewcommand{\arraystretch}{1}
	\begin{longtable}{cccccc}
			\hline
		$T$  &  $p$  & \text{K-N} & \text{Conditions on  }$a,b$  &  $f_{p}$  &  $c_{p}$ \\
		&  & \text{type} &  &  & \\
		\hline
		
		\endfirsthead
		\hline
		$T$  &  $p$  & \text{K-N} & \text{Conditions on  }$a,b$  &  $f_{p}$  &  $c_{p}$ \\
		&  & \text{type} &  &  & \\
		\hline
		\endhead
		\hline
		
		\multicolumn{6}{r}{\emph{continued on next page}}
		\endfoot
		\hline
		\caption{Local data for $E_T(a,b)$ where $T=C_2\times C_4, C_2 \times C_6, C_m$ for $m\geq 5$} \label{OtherTs}
		\endlastfoot
		$C_{5}$  &  $5$  & $ \mathrm{II} $ & $ v_{5}\!\left(  a+18b\right)  =1$  & $ 2$  &$1$ \\\cmidrule{3-6}
		&  &  $\mathrm{III}  $&  $v_{5}\!\left(  a+18b\right)  \geq2 $ &  $2 $ &  $2$ \\\midrule
		$C_{6} $ &  $2 $ &  $\mathrm{IV}$  &  $v_{2}\!\left(  a+b\right)  =1 $ &  $2 $ &$3$ \\\cmidrule{3-6}
		&  &$  \mathrm{IV}^{\ast} $ & $ v_{2}\!\left(  a+b\right)  =2$  &  $2$  &$3$ \\\cmidrule{2-6}
		& $ 3$  &  $\mathrm{III} $ & $ v_{3}\!\left(  a\right)  =1 $ &  $2$  &  $2$\\\cmidrule{3-6}
		&  & $\mathrm{I}_{0}^{\ast} $ &  $v_{3}\!\left(  a+9b\right)  =2,\ v_{3}\!\left(  a\right)  =2$  &  $2$  &  $2$ \\\cmidrule{3-6}
		&  &  $\mathrm{I}_{n}^{\ast} $ &  $n=v_{3}\!\left(  a+9b\right)  -2\geq
		1,\ v_{3}\!\left(  a\right)  =2 $ &  $2$  & \multicolumn{1}{l}{ $2  \text{ if} ab+9b^{2}\not \equiv 3^{n+2}\ \operatorname{mod}3^{n+3} $}\\\cmidrule{6-6}
		&  &  &  &  & \multicolumn{1}{l}{ $4   \text{ if}  ab+9b^{2}\equiv3^{n+2}	\ \operatorname{mod}3^{n+3}$ }\\\cmidrule{4-6}
		&  &  &  $n=2v_{3}\!\left(  a\right)  -4\geq2,\ v_{3}\!\left(  a\right)  \geq3 $&  $2 $ & $ 4$ \\\midrule
		$C_{7}$  &  $7 $ & $ \mathrm{II}$  &  $v_{7}\!\left(  a+4b\right)  \geq1$  &$ 2$  &$1$ \\\midrule
		$C_{8} $ &  $2$  &  $\mathrm{I}_{n}^{\ast} $ &  $n=2v_{2}\!\left(  a\right)-1\geq3 $ & $ 4$  & $ 4$ \\\midrule
		$C_{9}$  &  $3$  &  $\mathrm{IV} $ & $ v_{3}\!\left(  a+b\right)  \geq1$  &  $3$  &$3$ \\\midrule
		$C_{10} $ & $ 5$  & $ \mathrm{III} $ & $ v_{5}\!\left(  a+b\right)  \geq1$  & $ 2$  &$2$ \\\midrule
		$C_{12}  $&  $3 $ &  $\mathrm{I}_{n}^{\ast} $ &$  n=2v_{3}\!\left(  a\right)-1\geq1$  &  $2$  &  $4$ \\\midrule
		$C_{2}\times C_{4} $ &  $2$  & $ \mathrm{I}_{n}^{\ast}$  & \multicolumn{1}{l}{ $n=1 \text{ if } v_{2}\!\left(  a\right)  =1  \text{ or }  v_{2}\!\left(  a+4b\right)  =3 $} &  $3$  &$4$ \\\cmidrule{4-6}
		&  &\multicolumn{1}{l}{}& \multicolumn{1}{l}{ $n=2v_{2}\!\left(  a+8b\right)
			-6\geq2,\ v_{2}\!\left(  a\right)  =3 $} &  $4$  &  $4$ \\\cmidrule{4-6}
		&  & \multicolumn{1}{l}{} & \multicolumn{1}{l}{ $n=2v_{2}\!\left(  a\right)
			-6\geq2,\ v_{2}\!\left(  a\right)  \geq4 $} &  $4$  &  $4$ \\\midrule
		$C_{2}\times C_{6}  $&  $3$  &  $\mathrm{I}_{n}^{\ast} $ &  $n=2,\ v_{3}\!\left(b\right)  \geq3 $ &  $2$  & $ 4$ \\\cmidrule{4-6}
		&  &  &  $n=2v_{3}\!\left(  b^{2}-9a^{2}\right)  -4,\ v_{3}\!\left(  b\right)=1 $ &  $2$  &  $4$ \\\cmidrule{4-6}
		&  &  &  $n=2v_{3}\!\left(  b-9a\right)  -2,\ v_{3}\!\left(  b\right)  =2 $ & $2$  &  $4$
	\end{longtable}
	\endgroup}

\begin{proof}
It follows from \cite[Theorem 7.1]{Barrios2020} that
$E_{T}$, where $T=C_2\times C_4, C_2 \times C_6, C_m$ for $m\geq 5$, has additive reduction at a prime $p$ if and only if $p$ is a listed prime in Table~\ref{OtherTs} and the parameters of $E_{T}$ satisfy the corresponding conditions in Table~\ref{OtherTs}. In this proof, we will refer to Corollary~\ref{CorC2xC2Instat}%
~and~\ref{CorollaryonConductor} which are consequences of Theorems \ref{ThmC2at2}, \ref{ThmforC2podd}, \ref{ThmforC30}, \ref{ThmforC3}, and \ref{ThmC2xC2}.
Using Lemma~\ref{Lemma for minimal disc}, we compute the $p$-adic valuations
for the invariants $c_{4},c_{6},\Delta$ associated to a minimal model of
$E_{T}/\mathbb{Q}_{p}$ as given in \eqref{valotherTs}.
These valuations along with Table~\ref{ta:PapTableIandII} allow us to conclude that the
theorem holds for $T=C_{5},C_{7},C_{10}$. Below we treat the remaining cases separately.

\begin{equation}%
\renewcommand{\arraystretch}{0.4}
\renewcommand{\arraycolsep}{0.32cm}
\begin{array}
[c]{cccccc}\toprule
 T  &  p  & \text{Conditions on  }a,b  &  v_{p}\!\left(  c_{4}\right)    &
 v_{p}\!\left(  c_{6}\right)    &  v_{p}\!\left(  \Delta\right)   \\\toprule
 C_{5}  &  5  &  v_{5}\!\left(  a+18b\right)  =1  &  1  &  1  &  2 %
\\\cmidrule{2-6}
&  &  v_{5}\!\left(  a+18b\right)  \geq2  &  1  &  \geq2  &  3 \\\midrule
 C_{6}  &  2  &  v_{2}\!\left(  a+b\right)  =1  &  \geq6  &  5  &
 4 \\\cmidrule{3-6}
&  &  v_{2}\!\left(  a+b\right)  =2  &  4  &  6  &  8 \\\cmidrule{2-6}
&  3  &  v_{3}\!\left(  a\right)  =1  &  2  &  3  &  3 \\\midrule
 C_{7}  &  7  &  v_{7}\!\left(  a+4b\right)  \geq1  &  \geq1  &  1  &
 2 \\\midrule
 C_{10}  &  5  &  v_{5}\!\left(  a+b\right)  \geq1  &  1  &  \geq2  &
 3 \\\midrule
 C_{12}  &  3  &  v_{3}\!\left(  a\right)  \geq2  &  2  &  3  &  5+2v_{3}%
\!\left(  a\right)   \\\midrule
 C_{2}\times C_{6}  &  3  &  v_{3}\!\left(  b\right)  =1  &  2  &  3  &
 2+2v_{3}\!\left(  b^{2}-9a^{2}\right)  \geq8 \\\cmidrule{3-6}
&  &  v_{3}\!\left(  b\right)  =2  &  2  &  3  &  4+2v_{3}\!\left(
b-9a\right)  \geq8 \\\cmidrule{3-6}
&  &  v_{3}\!\left(  b\right)  \geq3  &  2  &  3  &  8 \\\bottomrule
\end{array}
\ \ \label{valotherTs}%
\end{equation}

\textbf{Case 1.} The family of elliptic curves $E_{C_6}$ has additive reduction at $p=2$ (resp.\ $p=3$) if and only if $v_{2}\!\left(  a+b\right)  =1,2$ (resp.\ $v_{3}(a)\geq1$). We consider
three subcases.

\qquad\textbf{Subcase 1a.} Suppose $v_{2}\!\left(  a+b\right)  =1,2$. Since
$E_{C_6}$ has nonzero $j$-invariant under these assumptions, there are relatively prime integers $h$
and $k$ such that $E_{C_6}$ is $\mathbb{Q}$-isomorphic to $E_{C_{3}}\!\left(
h,k\right)$ by
Proposition~\ref{rationalmodels}. Thus $\left(  c_{2},f_{2}\right)  =\left(  3,2\right)  $ by
Theorem~\ref{ThmforC3}. It follows from (\ref{valotherTs}) and
Table~\ref{ta:PapTableIV} that $E_{C_6}$ has Kodaira-N\'{e}ron type IV at $2$ if
$v_{2}\!\left(  a+b\right)  =1$ and $E_{C_6}$ has Kodaira-N\'{e}ron type $\mathrm{IV}%
^{\ast}$ at $2$ if $v_{2}\!\left(  a+b\right)  =2$.

\qquad\textbf{Subcase 1b.} Now suppose $v_{3}\!\left(  a\right)  =1$. By
(\ref{valotherTs}) and Table~\ref{ta:PapTableIandII}, $E_{C_6}$ has Kodaira-N\'{e}ron type $\mathrm{II}$ or
$\mathrm{III}$ at $3$. For the additional condition, note
that $\left(  \frac{c_{6}}{27}\right)  ^{2}+2\equiv
\frac{c_{4}}{3}\ \operatorname{mod}9$ and thus $E_{C_6}$ has Kodaira-N\'{e}ron type~$\mathrm{III}$ with $\left(  c_{3},f_{3}\right)  =\left(  2,2\right)  $.

\qquad\textbf{Subcase 1c.} Suppose $v_{3}\!\left(  a\right)  \geq2$. We
consider the models $F_{C_6,1}$ and $F_{C_6,2}$ which are obtained via the
admissible change of variables $x\longmapsto x+ \frac{3ab-a^{2}}{9}$,
$y\longmapsto y+4b^{2}x+\frac{2\left(  a+3b\right)  ^{2}}{27}$ and
$x\longmapsto x+a^{2}$, $y\longmapsto y$, respectively. Moreover, the
Weierstrass coefficients $a_{j}$ of $F_{C_6,i}$ have the following valuations.
\begin{equation}%
\renewcommand{\arraystretch}{0.9}
\renewcommand{\arraycolsep}{0.15cm}
\begin{array}
[c]{cccccccc}\toprule
 F_{C_6,i}  & \text{Conditions on } a  &  v_{p}\!\left(  a_{1}\right)    &
 v_{3}\!\left(  a_{2}\right)    &  v_{p}\!\left(  a_{3}\right)    &
 v_{p}\!\left(  a_{4}\right)    &  v_{p}\!\left(  a_{6}\right)    &
\text{Notes}\\\toprule
 F_{C_6,1}  & \multicolumn{1}{l}{ v_{3}\!\left(  a\right)  =2 } &  1  &  1  &
 n+3  &  n+4  &  n+3  & \multicolumn{1}{l}{ n=v_{3}\!\left(  a+9b\right)  -2 %
}\\\midrule
 F_{C_6,2}  & \multicolumn{1}{l}{ v_{3}\!\left(  a\right)  \geq3 } &  1  &  1  &
 \frac{n+4}{2}  &  \frac{n+4}{2}  &  2n+8  & \multicolumn{1}{l}{ n=2v_{3}
\!\left(  a\right)  -4 }\\\bottomrule
\end{array}
\label{nforC6}%
\end{equation}
By Lemma~\ref{LemmaIn}, $E_{C_6}$ has Kodaira-N\'{e}ron type $\mathrm{I}_{n}^{\ast}$ at $p=3$ with
$f_{3}=2$ where $n\ge 1$ is as given in (\ref{nforC6}). In particular, $c_{3}=4$ by Lemma
\ref{LemmaIn} and (\ref{nforC6}) if $v_{3}\!\left(  a\right)  \geq3$. It
remains to show the case when $v_{3}\!\left(  a\right)  =2$ and $v_{3}%
\!\left(  a+9b\right)  \geq2$. For this case, set $n=v_{3}\!\left(  a+9b\right)-2$ and observe that by (\ref{nforC6}),
the model $F_{T,1}$ satisfies the first six steps of Tate's Algorithm. In
fact,
\begin{equation}
a_{2,1}=\frac{-(a+3b)^{2}}{9}\equiv2\operatorname{mod}3\ \text{ and }\
a_{6,n+3}=\frac{(a+9b)(a+3b)^{3}a^{2}}{3^{n+9}}\equiv\frac{\left(
a+9b\right)  b}{3^{n+2}}\operatorname{mod}3.\label{a2a6forC6}%
\end{equation}
Consequently, if $v_{3}\!\left(  a+9b\right)  =2$, then
$a_{6,3}\equiv1+\frac{ab}{9}\text{ }\operatorname{mod}3=2\text{ }\operatorname{mod}3$ since $a_{6,3}$ and $\frac{ab}{9}$ are not divisible by $3$. Thus $t^{3}+a_{2,1}t^{2}+a_{4,2}t+a_{6,3}\equiv\left(  t+1\right)  \left(  t^{2}+t-1\right)
\ \operatorname{mod}3$ which
has only one root in $\mathbb{F}_{3}$, and hence $c_{3}=2$.

When $n=v_{3}\!\left(  a+9b\right)  -2\geq1$, we use (\ref{nforC6}),
(\ref{a2a6forC6}), and Lemma~\ref{LemmaIn} to see that $c_3$ is determined by whether $t^{2}-a_{6,n+3}$
splits over $\mathbb{F}_{3}$. This is equivalent to $a_{6,n+3} \equiv 1 \operatorname{mod}3$, i.e., $\left(  a+9b\right)
b\equiv3^{n+2}\ \operatorname{mod}3^{n+3}$.

\textbf{Case 2.} The family of elliptic curves $E_{C_8}$ has additive reduction at a
prime $p$ if and only if $p=2$ with $v_{2}\!\left(  a\right)  \geq2$. Next,
let $F_{C_8}$ be the model attained from $E_{C_8}$ via the admissible change of
variables $x\longmapsto x+a^{2}$ and $y\longmapsto y$. Set $n=2v_{2}\!\left(
a\right)  -1$ and observe that the Weierstrass coefficients of $a_{j}$ and the minimal discriminant $\Delta$ satisfy
$
 v_{2}\!\left(  a_{1}\right) = 1 ,v_{2}\!\left(  a_{2}\right)=1
,v_{2}\!\left(  a_{3}\right)=\frac{n+3}{2}  ,v_{2}\!\left(  a_{4}\right)=n+3  ,v_{2}\!\left(
a_{6}\right)=2n+3 ,\text{ and }v_{2}\!\left(
\Delta\right) = n+8$.

By Lemma~\ref{LemmaIn}, we
conclude that $E_{C_8}$ has Kodaira-N\'{e}ron type $\mathrm{I}_{n}^{\ast}$ at $2$ with
$\left(  c_{2},f_{2}\right)  =\left(  4,4\right)  $.

\textbf{Case 3.} The family of elliptic curves $E_{C_9}$ has additive reduction at a prime $p$ if and only if $p=3$ with
$v_{3}\!\left(  a+b\right)  \geq1$. Next, let $F_{C_9}$ be the model attained
from $E_{C_9}$ via the admissible change of variables $x\longmapsto x$ and
$y\longmapsto y+a+b$. Then the Weierstrass coefficients $a_{j}$ and the
quantity $b_{j}$ associated to $F_{C_9}$ satisfy 
$
v_{3}\!\left(  a_{3}\right)\geq 1  ,v_{3}\!\left(  a_{4}\right)\geq 2
,v_{3}\!\left(  a_{6}\right)\geq2  ,v_{3}\!\left(  b_{6}\right)=2  ,v_{3}\!\left(
b_{8}\right)=3,\text{ and }v_{3}\!\left(  \Delta\right)=5$.

In particular, $F_{C_9}$ satisfies the first five steps of Tate's Algorithm and
we conclude that $E_{C_9}$ has Kodaira-N\'{e}ron type~$\mathrm{IV}$ at $3$ with
$f_{3}=3$. Lastly $c_{3}=3$, beacuse of
\[
t^{2}+a_{3,1}t-a_{6,2}\equiv\left(  t+\frac{a+b}{3}\right)  \left(
t+\frac{a+b}{3}+b\right)  \ \operatorname{mod}3.
\]

\textbf{Case 4.} The family of elliptic curves $E_{C_{12}}$ has additive reduction at a prime $p$ if and
only if $p=3$ with $v_{3}\!\left(  a\right)  \geq1$. Next, set $n=2v_{3}%
\!\left(  a\right)  -1\geq1$. By (\ref{valotherTs}) and
Table~\ref{ta:PapTableIandII}, $E_{C_{12}}$ has Kodaira-N\'{e}ron type
$\mathrm{I}_{n}^{\ast}$ at $3$ with $f_{3}=2$. Since $E_{C_{12}}$ has a $4$ torsion
point, there are relatively
prime integers $n$ and $m$ such that $E_{C_{12}}$ is $\mathbb{Q}
$-isomorphic to $E_{C_{4}}\!\left(  n,m\right)  $ by Proposition~\ref{rationalmodels}. By Theorem~\ref{ThmforC4}
we obtain $c_{3}=4$.

\begin{equation}%
\renewcommand{\arraystretch}{0.1}
\renewcommand{\arraycolsep}{0.8cm}
\begin{array}
[c]{ccccc}\toprule%
 i  &  u_{i}  &  r_{i}  &  s_{i}  &  w_{i} \\\toprule%
 1  &  1  &  a^{2}  &  0  &  0 \\\cmidrule{1-5}%
 2  &  2  &  ab+4b^{2}  &  \frac{-a}{2}  &  0 \\\cmidrule{1-5}%
 3  &  2  &  \frac{-a^{2}-4ab}{4}  &  \frac{-a}{2}  &  \frac{a\left(
a+4b\right)  ^{2}}{8} \\\bottomrule%
\end{array}
\label{AdothTs}
\end{equation}

\textbf{Case 5}. The family of elliptic curves $E_{C_{2}\times C_{4}}$ has additive reduction at a prime $p$ if
and only if $p=2$ and $v_{2}\!\left(  a+4b\right)  <4$ with $a$ even. Next,
let $F_{C_{2}\times C_{4},i}$ be the curve attained from $E_{C_{2}\times C_{4}}$ via the admissible
change of variables $x\longmapsto u_{i}^{2}x+r_{i}$ and $y\longmapsto
u_{i}^{3}y+u_{i}^{2}s_{i}x+w_{i}$ where $u_{i},r_{i},s_{i},w_{i}$ are as
defined in \eqref{AdothTs}.

\textbf{Subcase 5a.} Suppose $v_{2}\!\left(  a\right)  =1$ and observe that
the Weierstrass coefficients $a_{j}$ and the minimal discriminant $\Delta$ of
$F_{T,1}$ satisfy $
v_{2}\!\left(  a_{1}\right)=1  ,v_{2}\!\left(  a_{2}\right)=1
,v_{2}\!\left(  a_{3}\right)=2  ,v_{2}\!\left(  a_{4}\right) \geq 4  ,v_{2}\!\left(
a_{6}\right) = 5  ,\text{ and }v_{2}\!\left(  \Delta\right)=8$.

By Lemma~\ref{LemmaIn} we conclude that $E_{C_{2}\times C_{4}}$ has Kodaira-N\'{e}ron
type $\mathrm{I}_{1}^{\ast}$ at $2$ with $\left(  c_{2},f_{2}\right)  =\left(
4,3\right)  $.

\textbf{Subcase 5b.} Suppose $v_{2}\!\left(  a+4b\right)  =3$. Then
$F_{T,2}=E_{C_{2}\times C_{2}}\!\left(  A,B,1\right)  $ where $A=\frac{\left(
a+4b\right)  ^{2}}{16}$ and $\ B=b^{2}$. Note that $v_{2}(A)=2$ and
$B\equiv1\text{ }\operatorname{mod}4$. By Theorem~\ref{ThmC2xC2}, $E_{C_{2}\times C_{4}}$ has
Kodaira-N\'{e}ron type $\mathrm{I}_{1}^{\ast}$ at $2$ with $f_{2}=3$. Furthermore,
$c_{2}=4$ since $A\equiv4\text{ }\operatorname{mod}32$.

\textbf{Subcase 5c.} Suppose $v_{2}\!\left(  a\right)  \geq3$. Then
$F_{T,3}=E_{C_{2}\times C_{2}}\!\left(  A,B,1\right)  $ where $A=\frac
{-a\left(  a+8b\right)  }{16}$ and $B=\frac{-\left(  a+4b\right)  ^{2}}{16}$.
Since $v_{2}(A)\geq3$ and $B\equiv3\text{ }\operatorname{mod}4$ we conclude by
Theorem~\ref{ThmC2xC2} that $E_{C_{2}\times C_{4}}$ has Kodaira-N\'{e}ron type $\mathrm{I}_{n}^{\ast}$
at $2$ with $c_{2}=4$, $f_{2}=4$, and
\[
n=2v_{2}\!\left(  \frac{-a\left(  a+8b\right)  }{16}\right)  -4=\left\{
\begin{array}
[c]{ll}%
2v_{2}\!\left(  a+8b\right)  -6 & \text{if }v_{2}\!\left(  a\right)  =3,\\
2v_{2}\!\left(  a\right)  -6 & \text{if }v_{2}\!\left(  a\right)  \geq4.
\end{array}
\right.
\]

\textbf{Case 6}. The family of elliptic curves $E_{C_{2}\times C_{6}}$ has additive reduction at a prime $p$ if
and only if $p=3$ and $v_{3}\!\left(  b\right)  \geq1$. Then, by (\ref{valotherTs})
and Table~\ref{ta:PapTableIandII}, $E_{C_{2}\times C_{6}}$ has Kodaira-N\'{e}ron type
$\mathrm{I}_{n}^{\ast}$ with $f_{3}=2$ where $n=v_{3}\!\left(  \Delta\right)
-6$. By Corollary~\ref{CorC2xC2Instat}, we get $c_{3}=4$.
\end{proof}

\section{Consequences of main results}\label{Section on Consequences of the local data} 
In this section we deduce several results from the theorems proven in Section~\ref{PfMainThm}. The following result is a consequence of Proposition~\ref{rationalmodels} and Theorem~\ref{ThmC2xC2}.
\begin{corollary}
\label{CorC2xC2Instat} Let $E$ be a rational elliptic curve with full
$2$-torsion. If $E$ has Kodaira-N\'{e}ron type \rm{I}$_{n}^{\ast}$ at some prime $p$ with
$n>1$, then $n$ is even and $c_{p}=4$.
\end{corollary}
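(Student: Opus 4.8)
The plan is to reduce the statement to a direct inspection of the classification in Theorem~\ref{ThmC2xC2} and its accompanying Table~\ref{Table for C2xC2}. First I would invoke Proposition~\ref{rationalmodels}(5): since $E$ has full $2$-torsion, it is $\mathbb{Q}$-isomorphic to some member $E_{C_2\times C_2}(a,b,d)$ with $\gcd(a,b)=1$, $d$ positive squarefree, and $a$ even. Because the Kodaira-N\'eron type and the local Tamagawa number at $p$ are $\mathbb{Q}_p$-isomorphism invariants (both are read off a minimal model of $E/\mathbb{Q}_p$), and a $\mathbb{Q}$-isomorphism is in particular a $\mathbb{Q}_p$-isomorphism, the local data of $E$ at $p$ coincide with those of $E_{C_2\times C_2}(a,b,d)$ at $p$. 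Thus it suffices to examine the rows of Table~\ref{Table for C2xC2} whose Kodaira-N\'eron type is $\mathrm{I}_n^\ast$.

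Next I would run through those rows. For an odd prime $p$, the only entries of type $\mathrm{I}_n^\ast$ are the $\mathrm{I}_0^\ast$ row (with $n=0$) and the family with $n=2v_p(ab(a-b))\geq 2$; in the latter $n$ is manifestly even and the table records $c_p=4$. For $p=2$, the relevant rows are $\mathrm{I}_0^\ast$ (with $v_2(a)=2$, so $n=0$), $\mathrm{I}_1^\ast$ (with $v_2(a)=2$, so $n=1$), and the two families $n=2v_2(a)\geq 2$ (with $v_2(d)=1$) and $n=2v_2(a)-4\geq 2$ (with $bd\equiv 3 \bmod 4$), both of which have $n$ even and $c_2=4$.

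Finally I would observe that every $\mathrm{I}_n^\ast$ row with $n>1$ falls into one of the even-$n$, $c_p=4$ families just listed: the only occurrence of an odd index anywhere in the table is the single $\mathrm{I}_1^\ast$ row at $p=2$ with $n=1$, while the $\mathrm{I}_0^\ast$ rows have $n=0\leq 1$. Hence if $E$ has type $\mathrm{I}_n^\ast$ at $p$ with $n>1$, then $n$ is even and $c_p=4$, as claimed. There is essentially no analytic difficulty here; the only point requiring care is the bookkeeping, namely confirming that no $\mathrm{I}_n^\ast$ entry in Table~\ref{Table for C2xC2} produces an odd $n$ exceeding $1$, so that the two excluded small values $n=0$ and $n=1$ are precisely the exceptions and every remaining case carries $c_p=4$.
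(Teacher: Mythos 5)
Your proposal is correct and is exactly the argument the paper intends: the corollary is stated as a direct consequence of Proposition~\ref{rationalmodels} and Theorem~\ref{ThmC2xC2}, i.e., reduce to the family $E_{C_2\times C_2}(a,b,d)$ and read off Table~\ref{Table for C2xC2}, where every $\mathrm{I}_n^{\ast}$ row with $n>1$ has $n$ even and $c_p=4$. Your additional remark that the Kodaira-N\'eron type and $c_p$ are $\mathbb{Q}_p$-isomorphism invariants is the (implicit) justification the paper relies on as well.
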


Similarly, looking at the local conductor exponents in Theorems~\ref{ThmC2at2},~\ref{ThmforC2podd},~\ref{ThmforC30}~and~\ref{ThmforC3} we get the following interesting fact.
\begin{corollary}
\label{CorollaryonConductor}If $E$ is a rational elliptic curve with a
$2$-torsion (resp.\ $3$-torsion) point, then $f_{p}\leq2$ for $p\neq2$
(resp.\ $p\neq3$).
\end{corollary}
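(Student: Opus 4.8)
The plan is to deduce the bound from the parameterization of Proposition~\ref{rationalmodels} together with an inspection of the conductor-exponent columns of the classification tables, using that $f_p$ is a $\mathbb{Q}$-isomorphism invariant of $E/\mathbb{Q}_p$. First I would record the elementary reduction built into the preliminaries: since $f_p=0$ at primes of good reduction, $f_p=1$ at primes of multiplicative reduction, and $f_p=2+\delta_p$ at primes of additive reduction with $\delta_p=0$ whenever $p\geq5$, the inequality $f_p\leq2$ holds automatically at every prime $p\geq5$. Consequently the $2$-torsion statement requires an argument only at $p=3$, and the $3$-torsion statement only at $p=2$; everything reduces to computing $f_3$ (resp.\ $f_2$) for the relevant parameterized families.

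For the $2$-torsion case I would argue as follows. If $C_2\hookrightarrow E(\mathbb{Q})_{\mathrm{tors}}$, then by Proposition~\ref{rationalmodels} the curve $E$ is $\mathbb{Q}$-isomorphic either to a member of $E_{C_2}$ (when $E$ has no full $2$-torsion) or to a member of $E_{C_2\times C_2}$ (when $C_2\times C_2\hookrightarrow E(\mathbb{Q})_{\mathrm{tors}}$). I would then invoke Theorem~\ref{ThmforC2podd} for $E_{C_2}$ and Theorem~\ref{ThmC2xC2} for $E_{C_2\times C_2}$: in Table~\ref{TableforC2podd}, and in the $p\neq2$ rows of Table~\ref{Table for C2xC2}, every listed conductor exponent at an odd prime of additive reduction equals $2$. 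Hence $f_3=2$ whenever $E$ has additive reduction at $3$, and $f_3\leq1$ otherwise, so $f_p\leq2$ for all $p\neq2$.

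For the $3$-torsion case, if $C_3\hookrightarrow E(\mathbb{Q})_{\mathrm{tors}}$ then Proposition~\ref{rationalmodels} shows that $E$ is $\mathbb{Q}$-isomorphic to a member of $E_{C_3}$ when $j\neq0$, and to $E_{C_3}(24,1)$ or to some $E_{C_3^0}(a)$ when $j=0$; in every case $E$ is covered by Theorems~\ref{ThmforC30} and~\ref{ThmforC3}. Reading off the $p\neq3$ rows of Table~\ref{TableforC3} and Table~\ref{Table for C30}, the only Kodaira--N\'eron types that occur at a prime $p\neq3$ of additive reduction are $\mathrm{IV}$ and $\mathrm{IV}^{\ast}$, each with $f_p=2$. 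Thus $f_2=2$ when $E$ has additive reduction at $2$ and $f_2\leq1$ otherwise, giving $f_p\leq2$ for all $p\neq3$.

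The argument carries no analytic difficulty, so the only point demanding care — and what I expect to be the main obstacle to a clean write-up — is verifying that the case split is genuinely exhaustive. Specifically, the full-$2$-torsion curves are handled by $E_{C_2\times C_2}$ rather than $E_{C_2}$, and the $j=0$ curves with a $3$-torsion point are handled by $E_{C_3^0}$ together with the single curve $E_{C_3}(24,1)$; these are precisely the exceptional families called out in Proposition~\ref{rationalmodels}. I would therefore state the dichotomies explicitly and confirm that the union of the invoked tables covers every admissible pair $(E,p)$, after which the conclusion is immediate from the uniform value $f_p=2$ appearing in those tables at the primes in question.
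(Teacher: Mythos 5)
Your argument is correct and is essentially the paper's own proof, which likewise deduces the bound by combining the exhaustive parameterization of Proposition~\ref{rationalmodels} with a direct reading of the conductor-exponent entries at the relevant primes in Theorems~\ref{ThmforC2podd}, \ref{ThmC2xC2}, \ref{ThmforC30}, and~\ref{ThmforC3}. Your preliminary reduction to $p=3$ (resp.\ $p=2$) via $\delta_p=0$ for $p\geq5$ is a harmless streamlining rather than a different method, since the tables already report $f_p=2$ at every odd prime of additive reduction.
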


As a consequence of the results proven in Section~\ref{PfMainThm}, we observe that a rational elliptic curve with non-trivial torsion subgroup has Kodaira-N\'{e}ron type $\mathrm{II}^{\ast}$ at some
prime $p$ if and only if $p=2$. Consequently, we get the following result:  

\begin{corollary}
\label{CorIIstar}
If a rational elliptic curve $E$ has Kodaira-N\'{e}ron type ${\mathrm{II}}^{\ast}$ at an odd
prime, then its torsion subgroup is trivial.
\end{corollary}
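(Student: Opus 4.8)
The plan is to leverage the full classification carried out in Section~\ref{PfMainThm}: since the Kodaira-N\'eron type at a prime is invariant under $\mathbb{Q}$-isomorphism, it suffices to locate every occurrence of the symbol $\mathrm{II}^{\ast}$ across the classification tables and to observe that it arises only at $p=2$. I would argue by contraposition, assuming that $E$ has non-trivial torsion and showing that $E$ cannot then have type $\mathrm{II}^{\ast}$ at any odd prime.

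First I would reduce to the six ``building block'' families. By Mazur's Torsion Theorem~\ref{MazurTorThm}, every non-trivial group in the list contains a subgroup of prime order, so $E(\mathbb{Q})_{\text{tors}}$ contains one of $C_2, C_3, C_5, C_7$. Invoking Proposition~\ref{rationalmodels}, $E$ is then $\mathbb{Q}$-isomorphic to a member of one of $E_{C_2}$, $E_{C_2\times C_2}$, $E_{C_3}$, $E_{C_3^0}$, $E_{C_5}$, $E_{C_7}$. The two delicate cases of Proposition~\ref{rationalmodels}---full $2$-torsion, which is handled by $E_{C_2\times C_2}$ rather than $E_{C_2}$, and the $C_3$ curves with $j=0$, which are handled by $E_{C_3^0}$ or the single curve $E_{C_3}(24,1)$---are exactly the ones I would check separately so that the reduction is exhaustive across all fourteen torsion groups.

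Next I would note that $\mathrm{II}^{\ast}$ is an additive-reduction type, so the prime in question must be a prime of additive reduction, and hence its local data is governed by the relevant theorem and table. Then I scan the tables attached to these families: Tables~\ref{TableforC2} and~\ref{TableforC2podd} for $E_{C_2}$, Table~\ref{Table for C2xC2} for $E_{C_2\times C_2}$, Tables~\ref{TableforC3} and~\ref{Table for C30} for $E_{C_3}$ and $E_{C_3^0}$, and the $C_5,C_7$ rows of Table~\ref{OtherTs}. A direct inspection shows that the Kodaira-N\'eron type $\mathrm{II}^{\ast}$ appears in exactly one row, namely in Table~\ref{TableforC2} (the family $E_{C_2}$) at $p=2$, and never at an odd prime; in particular the odd-prime table~\ref{TableforC2podd} produces only the types $\mathrm{I}_0^{\ast}, \mathrm{III}, \mathrm{I}_n^{\ast}, \mathrm{III}^{\ast}$. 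Combining these observations, if $E$ has type $\mathrm{II}^{\ast}$ at a prime $p$ then $p=2$; equivalently, type $\mathrm{II}^{\ast}$ at an odd prime forces the torsion subgroup to be trivial.

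The hard part here is bookkeeping rather than mathematics: the main obstacle is to be certain that the reduction to the six families is genuinely complete---covering every one of the fourteen torsion groups, including the semistable case $C_2\times C_8$, which a priori has no additive primes and hence no $\mathrm{II}^{\ast}$ reduction at all---and that no table entry of type $\mathrm{II}^{\ast}$ at an odd prime has been overlooked in the scan.
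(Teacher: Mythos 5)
Your proposal is correct and follows essentially the same route as the paper, which derives the corollary purely from the observation that $\mathrm{II}^{\ast}$ occurs in the classification tables only at $p=2$ (in Table~\ref{TableforC2} for $E_{C_2}$). Your version is slightly more explicit than the paper's one-line justification in spelling out the reduction, via Mazur's theorem and Proposition~\ref{rationalmodels}, to the families $E_{C_2}$, $E_{C_2\times C_2}$, $E_{C_3}$, $E_{C_3^0}$, $E_{C_5}$, $E_{C_7}$, including the $j=0$ and full $2$-torsion edge cases, but the substance is the same table inspection.
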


Next, we show how our results can be utilized to construct examples of rational elliptic curves with prescribed local data.
\begin{corollary}
\label{NeronC2}Let $S$ be a finite set of prime
numbers. For each $p\in S$, let $n_{p}$ be a nonnegative integer such that $n_{p}\neq1,2,3$ if $p=2$. Then there is a rational
elliptic curve $E$ with the property that it has additive reduction at each prime $p \in S$ and the Kodaira-N\'{e}ron type at $p$ is \rm{I}$_{n_p}^{\ast}$. Moreover, if $E$ has additive reduction at a prime $p$, then $p\in S\cup\left\{  2\right\}$. 
\end{corollary}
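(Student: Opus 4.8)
The plan is to realize the required curve as a member of the family $E_{C_2}(a,b,d)$ and to read off its reduction type at each prime from Theorems~\ref{TisC2Addred2Thm7}, \ref{ThmC2at2}, and \ref{ThmforC2podd}. The organizing observation is Theorem~\ref{TisC2Addred2Thm7}: $E_{C_2}$ has additive reduction at an odd prime $q$ if and only if $q\mid\gcd(a,bd)$. Hence if the odd part of $a$ is supported exactly on the odd primes of $S$, then no odd prime outside $S$ can be a prime of additive reduction, while additive reduction at $2$ is allowed in every case. I would therefore take $a=\pm\prod_{p\in S\setminus\{2\}}p$, with the sign chosen so that $a\equiv1\bmod 4$ (taking $a=1$ if $S\subseteq\{2\}$); the parameterization permits $a<0$ since only $d$ and $\gcd(a,b)$ are required to be positive. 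This makes $v_p(a)=1$ for every odd $p\in S$ and $v_q(a)=0$ for every odd prime $q\notin S$.

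Next I would prescribe $v_p(b)$ and $v_p(d)$ at each $p\in S$ so as to land in a row of Table~\ref{TableforC2podd} or Table~\ref{TableforC2} that yields type $\mathrm{I}_{n_p}^{\ast}$. For an odd prime $p\in S$ this is uniform: if $n_p$ is odd, set $v_p(d)=1$ and $v_p(b)=(n_p+1)/2$ (the row $v_p(a)=1,\ v_p(d)=1,\ n=2v_p(b)-1$); if $n_p\ge2$ is even, set $v_p(d)=0$ and $v_p(b)=(n_p+2)/2$ (the row $v_p(a)=1,\ v_p(d)=0,\ n=2v_p(b)-2$); and if $n_p=0$, set $v_p(d)=0,\ v_p(b)=1$ and impose $v_p(b^2d-a^2)=2$, the $\mathrm{I}_0^{\ast}$ row. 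For $p=2\in S$ the hypothesis $n_2\notin\{1,2,3\}$ is exactly what makes a suitable row available: when $n_2\ge4$ I use the rows with $v_2(b)\ge3$ and $a\equiv1\bmod4$, namely $v_2(b)=(n_2+2)/2,\ v_2(d)=0$ for even $n_2$ and $v_2(b)=(n_2+1)/2,\ v_2(d)=1$ for odd $n_2$; when $n_2=0$ I use the $\mathrm{I}_0^{\ast}$ row with $v_2(b)=0,\ v_2(d)=0$ together with $d\equiv5\bmod8$, which forces $v_2(b^2d-a^2)=2$. If $2\notin S$ the valuations at $2$ are irrelevant and may be set arbitrarily.

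Having fixed the exact prime-power contributions of $a,b,d$ at the primes of $S$, I would produce honest integers meeting all conditions at once. The prescribed $p$-adic valuations live at distinct primes, and the congruence $a\equiv1\bmod4$ together with the coupling conditions $v_p(b^2d-a^2)=2$ (for the primes with $n_p=0$) and $d\equiv5\bmod8$ (if $n_2=0$) are congruences at pairwise coprime moduli. I would take $b$ to be its mandatory prime-power part times a unit adjustment coprime to $S$, and $d=\bigl(\prod_{p\in S,\ n_p\ \mathrm{odd}}p\bigr)\cdot\ell$ for an auxiliary prime $\ell\notin S$ supplied by Dirichlet's theorem in the residue class dictated by the Chinese Remainder Theorem, so that $d$ is squarefree, positive, $d\neq1$, and meets all residue conditions. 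Since every unit or auxiliary factor is coprime to $S$, one checks $\gcd(a,b)=\prod_{p\in S\setminus\{2\}}p$ is squarefree, $d$ is squarefree, and no odd prime factor of $b$ or $d$ outside $S$ divides $a$.

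With these parameters the theorems apply verbatim: Theorems~\ref{ThmC2at2} and \ref{ThmforC2podd} give Kodaira--N\'eron type $\mathrm{I}_{n_p}^{\ast}$ at each $p\in S$, and by Theorem~\ref{TisC2Addred2Thm7}, because $a$ has no odd prime factor outside $S$, the curve is semistable at every odd prime $q\notin S$; hence its primes of additive reduction lie in $S\cup\{2\}$. The main obstacle is not conceptual but organizational: one must verify that the valuation prescriptions, the coupling congruences, and the squarefreeness of $d$ and $\gcd(a,b)$ can all be met simultaneously by a single triple $(a,b,d)$ whose parameter $a$ is supported at the right primes. This is precisely where the Chinese Remainder Theorem and Dirichlet's theorem on primes in arithmetic progressions are invoked, and where the exclusion $n_2\notin\{1,2,3\}$ is essential, since it guarantees that the $v_2(b)\ge3$ rows (or the $\mathrm{I}_0^{\ast}$ row) are the ones realizing $n_2$.
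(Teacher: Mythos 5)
Your proposal is correct and follows essentially the same route as the paper: realize $E$ as a member of $E_{C_{2}}(a,b,d)$, prescribe the valuations of $a$, $b$, $d$ at each $p\in S$ so as to land in an $\mathrm{I}_{n_{p}}^{\ast}$ row of Table~\ref{TableforC2} or Table~\ref{TableforC2podd}, and use Theorem~\ref{TisC2Addred2Thm7} to rule out additive reduction at odd primes outside $S$. The only real difference is bookkeeping: by fixing $v_{p}(a)=1$ at every odd $p\in S$ you are forced, when $n_{p}=0$, into the conditional row $v_{p}(b^{2}d-a^{2})=2$ and hence into the Chinese Remainder Theorem/Dirichlet apparatus, whereas the paper varies $v_{p}(a)$ (taking $v_{p}(a)>1$ when $n_{p}=0$ at odd $p$, and $v_{2}(a)=1$ at $2$) so that every row it invokes is determined by valuations alone and $(a,b,d)$ can simply be taken to be products of prime powers with no auxiliary congruences. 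Incidentally, your handling of $n_{p}=0$ at odd $p$ is the more careful one: the paper's printed triple $(l_{p},r_{p},s_{p})=(2,1,1)$ gives $v_{p}(d)=1$ and therefore lands in the $\mathrm{III}^{\ast}$ row of Table~\ref{TableforC2podd} rather than an $\mathrm{I}_{0}^{\ast}$ row; it should read $(2,1,0)$.
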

\begin{proof}
For $p\in S$, let%

\vspace*{-0.1in}
\[
\left(  l_{p},r_{p},s_{p}\right)  =\left\{
\begin{array}
[c]{cl}%
\left(  0,1,1\right)   & \text{if }p=2\text{ and }n_{p}=0,\\
\left(  1,\frac{n_{p}}{2},0\right)   & \text{if }p=2\text{ and }n_{p}%
\geq4\text{ is even},\\
\left(  1,\frac{n_{p}-1}{2},1\right)   & \text{if }p=2\text{ and }n_{p}%
\geq5\text{ is odd},\\
\left(  2,1,1\right)   & \text{if }p\neq2\text{ and }n_{p}=0,\\
\left(  1,\frac{n_{p}+2}{2},0\right)   & \text{if }p\neq2\text{ and }n_{p}%
\geq2\text{ is even},\\
\left(  1,\frac{n_{p}+1}{2},1\right)   & \text{if }p\neq2\text{ and }n_{p}%
\geq1\text{ is odd.}%
\end{array}
\right.
\]
Now define%
\[
A=\prod_{p\in S}p^{l_p},\qquad 
B=\prod_{p\in S}p^{r_{p}},\qquad
D=\prod_{p\in S}p^{s_{p}}.
\]
Then by Theorems~\ref{ThmC2at2}~and~\ref{ThmforC2podd} we have that $E_{C_{2}%
}\!\left(  A,B,D\right)  $ has additive reduction at a prime $p$ if and only if $p\in S\cup\left\{  2\right\}$. Moreover, if $p\in S$, then
its Kodaira-N\'{e}ron type at $p$ is \rm{I}$_{n_{p}}^{\ast}$.
\end{proof}

From Corollary~\ref{NeronC2}, we see that the 
elliptic curve $E=E_{C_{2}}\!\left(
210,2^{6}\cdot3^{4}\cdot5^{6}\cdot7^{5},15\right):$
	\[
	E: y^{2}=x^{3}+13440x^{2}-54296487559248001716600x
	\]
considered in Example~\ref{introexample} has Kodaira-N\'{e}ron type \rm{I}$_{n}^{\ast}$ at $p$
where $\left(  p,n\right)  $ is one of the following pairs: $\left(
2,12\right)  ,\ \left(  3,7\right)  ,\ \left(  5,11\right)  ,$ or $\left(
7,8\right)$.

\section{Global Tamagawa numbers}\label{Global Tamagwa}
Given a rational elliptic
curve $E$, we denote the \textit{global Tamagawa number} $c=\prod_{p}c_{p}$. In this section, we classify the rational elliptic curves with a torsion point of order $2$ or $3$ that have global Tamagawa number~$1$. In \cite[Lemma 2.26]{Lorenzini2012}, Lorenzini showed that there is an infinite family of rational elliptic curves with a $3$-torsion point such that $c=1$. In the notation of this article, the family constructed by Lorenzini is equivalent to the existence of infinitely many integers $a$ with $3\nmid a$ such that $
E_{C_{3}}\!\left(  -a^{3},-1\right)  :y^{2}-a^{3}xy-a^{6}y=x^{3}$
has global Tamagawa number $1$. Next, we prove Theorem~\ref{GlobalTamaat3}, which determines all rational elliptic curves with a $3$-torsion point that have global Tamagawa number
equal to $1$.

\begin{proof}[Proof of Theorem~\ref{GlobalTamaat3}]
By Proposition~\ref{rationalmodels}, $E$ is $\mathbb{Q}$-isomorphic 
to $E_{C_{3}^{0}}\!\left(  a\right)$, for some cubefree
integer $a$, or to $E_{C_{3}}\!\left(  a,b\right)$, for some relatively prime
integers $a$ and $b$, with $a$ positive. First suppose $E$ is $\mathbb{Q}$-isomorphic to $E_{C_{3}^{0}%
}=E_{C_{3}^{0}}\!\left(  a\right)  $ for some positive cubefree integer. By
Theorem~\ref{ThmforC30}, $E$ has additive reduction at $3$ and at each prime
$p$ dividing $a$. If $p$ divides $a$, 
then $c_{p}=3$. Thus $v_{p}\!\left(
a\right)  =0$ for each prime $p$, because of our assumption that $c_{p}=1$. Consequently, $a=1$ since
$a$ is positive and by Theorem~\ref{ThmforC30}, $E_{C_{3}^{0}}\!\left(
1\right)  :y^{2}+y=x^{3}$ has global Tamagawa number $1$.

Now suppose $E$ is $\mathbb{Q}$-isomorphic to $E_{C_{3}}\!\left(  k,b\right)  $ for some relatively prime
integers $k$ and $b$ with $k$ positive. Now let $p$ be a
prime and observe that if $v_{p}\!\left(  k\right)  \not \equiv
0\ \operatorname{mod}3$, then Theorem~\ref{ThmforC3} implies that $E$ has additive reduction at $p$ with $c_{p}=3$,
which is a contradiction. Thus $v_{p}\!\left(  k\right)  \equiv
0\ \operatorname{mod}3$ for each prime $p$ which is equivalent to $k$ being a
cube. Therefore $k=a^{3}$ for some positive integer $a$ and $E$ is semistable at each
prime $p\neq3$ by Theorem~\ref{ThmforC3}. Next, suppose $p$ is
a prime dividing $b$ and let $E_1$ be the elliptic curve attained from
$E_{C_{3}}\!\left(  a^{3},b\right)  $ via the admissible change of variables
$x\longmapsto a^{4}x+a^{4}b$ and $y\longmapsto a^{6}y$. Then the Weierstrass
coefficients $a_{3},a_{4},a_{6}$ of $E_1\ $are divisible by $p$ and $t^{2}%
+a_{1}t-a_{2}\equiv t\left(  t+a\right)  \ \operatorname{mod}p$. Hence, by Tate's Algorithm, $E$
has split multiplicative reduction at $p$. Hence $c_{p}=v_{p}\!\left(  \Delta\right)  =3v_{p}\!\left(  b\right)  $,
which contradicts our assumption that $c_{p}=1$. Thus $v_{p}\!\left(
b\right)  =0$ for each prime $p$ which implies that $b=\pm1$. In particular,
$\Delta=\pm a^{3}\mp27$. If $v_{p}\!\left(  \Delta\right)  \leq1$
for some prime $p$, then $c_{p}=1$ by Tate's Algorithm. If $v_{3}\!\left(
\pm a^{3}\mp27\right)  >0$, then $E$ has additive reduction at $3$ and by Theorem~\ref{ThmforC3} we have that $c_{3}=1$ if and only if one of the conditions in
(\ref{C3tamacond}) is satisfied for $v_{3}\!\left(  \pm a^{3}\mp27\right)  \geq3$.
Lastly, suppose $p\neq3$ with $v_{p}\!\left(  \Delta\right)  >1$. Since $E$
has multiplicative reduction at $p$, by Tate's Algorithm $c_{p}=1$ if and only if $E$
has non-split multiplicative reduction at $p$ and $v_{p}\!\left( \pm a^{3}\mp27\right)  $ is odd. Next, let $E_2$ be the elliptic curve attained
from $E_{C_{3}}\!\left(  a^{3},\pm 1\right)  $ via the admissible change of
variables $x\longmapsto\frac{a^{4}}{9}x-\frac{a^{6}}{9}$ and $y\longmapsto
\frac{a^{6}}{27}y-\frac{a^{7}}{9}x+\frac{a^{9}}{27}$. Then the Weierstrass
coefficients $a_{3},a_{4},a_{6}$ of $E_{2}\ $are divisible by $p$ and Tate's
Algorithm implies that $E$ has non-split multiplicative reduction if and only if
$t^{2}+a_{1}t-a_{2}\equiv t^{2}-3at+3a^{2}\ \operatorname{mod}p$ does not
split in $\mathbb{F}_{p}$. This is equivalent to $\left(  \frac{-3a^{2}}%
{p}\right)  =\left(  \frac{-3}{p}\right)  =-1$. In particular,  $p\equiv
5,11\ \operatorname{mod}12$.
\end{proof}

The next  result gives another infinite family of curves with a $3$-torsion point that has global Tamagawa number equal to $1$.
\begin{corollary}\label{CorC3c1}
There are infinitely many non-isomorphic elliptic curves with a $3$-torsion
point and global Tamagawa number equal to $1$.
\end{corollary}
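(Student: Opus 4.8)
The plan is to read Theorem~\ref{GlobalTamaat3} as an existence statement and to produce infinitely many admissible curves from the second family it describes. Concretely, I will consider the curves $E_{C_3}(a^3,1)\colon y^2+a^3xy+a^6y=x^3$ for positive integers $a$; these are genuine members of $E_{C_3}$ (since $\gcd(a^3,1)=1$ and $a^3>0$) of exactly the shape appearing in Theorem~\ref{GlobalTamaat3}, and by Theorem~\ref{ThmforC3} their minimal discriminant is $\Delta=a^3-27$. It therefore suffices to exhibit infinitely many $a$ for which $v_p(a^3-27)$ lies in the set permitted by \eqref{C3tamacond} at every prime $p$. The simplifying observation is that if $a^3-27$ is squarefree, then $v_p(a^3-27)\in\{0,1\}$ for every $p$, so $E$ has only good reduction or type $\mathrm{I}_1$ multiplicative reduction and hence $c_p=1$ everywhere; equivalently, the value $1$ is allowed at every $p$ by the first line of \eqref{C3tamacond}, and squarefreeness forces $9\nmid a^3-27$, whence $3\nmid a$ and $v_3(a^3-27)=0$. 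Thus the entire problem reduces to showing that $a^3-27$ is squarefree for infinitely many $a$, and for each such $a$ we obtain $c=1$ from Theorem~\ref{GlobalTamaat3}.

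To handle the squarefree step I would factor $a^3-27=(a-3)(a^2+3a+9)$. For $3\nmid a$ the two factors are coprime, since $\gcd(a-3,a^2+3a+9)$ divides $\gcd(a-3,27)=1$. The linear factor $a-3$ is squarefree on a set of density $6/\pi^2$, while the quadratic factor $g(a)=a^2+3a+9$ is separable (its discriminant is $-27$) and, after restricting to $3\nmid a$, carries no fixed prime-square divisor. By the classical theorem that a separable quadratic with no fixed square divisor takes squarefree values on a set of positive density, $g(a)$ is squarefree on a positive-density set; combining the two squarefree sieves—whose tails over large primes converge because each prime $p$ removes only $O(1)$ residues modulo $p^2$ and $\sum_p p^{-2}<\infty$—shows that the set of $a$ with $3\nmid a$ and both factors squarefree has positive density, and in particular is infinite. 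For every such $a$ the integer $a^3-27$ is squarefree.

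It remains to note that these infinitely many curves fall into infinitely many isomorphism classes. Since the minimal discriminant $|a^3-27|$ tends to infinity with $a$ and only finitely many isomorphism classes of rational elliptic curves share a given minimal discriminant, the admissible $a$ produce infinitely many non-isomorphic curves, each with a $3$-torsion point and $c=1$; I would also record that this is genuinely a new family, distinct from Lorenzini's $E_{C_3}(-a^3,-1)$.

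The step I expect to be the real obstacle is the arithmetic input that $a^3-27$ is squarefree infinitely often: this is not a bare elementary manipulation but rests on the square sieve for the quadratic factor $a^2+3a+9$ (the irreducible-cubic case is avoided precisely because $a^3-27$ splits as a coprime product of a linear and a quadratic factor). A pleasant structural feature one can exploit to streamline, or even to make self-contained, the sieve is that every prime $p\equiv 2\bmod 3$ is inert in $\Q(\sqrt{-3})$, so $-27$ is a nonresidue modulo such $p$ and $g(a)=a^2+3a+9$ is never divisible by $p$; consequently every prime factor of $g(a)$ is $\equiv 1\bmod 3$ (or equals $3$, which is excluded once $3\nmid a$), and at the primes $p\equiv 2\bmod 3$ the criterion \eqref{C3tamacond} already tolerates any odd valuation, so the only primes whose square one must actively avoid are those with $p\equiv 1\bmod 3$.
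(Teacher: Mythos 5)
Your argument is correct, and it follows the same skeleton as the paper's proof: specialize to $b=1$, force the discriminant $a^{3}-27$ to be (essentially) squarefree by a squarefree-values theorem for a cubic polynomial, and conclude from Theorem~\ref{GlobalTamaat3}. The difference lies in which subfamily you choose and which clause of \eqref{C3tamacond} you land in. The paper sets $a=9k$, so that $a^{3}-27=27\left(27k^{3}-1\right)$ has $v_{3}=3$ exactly and $1-9k\equiv 1\not\equiv -2\pmod 9$; those curves have additive reduction at $3$ (Kodaira-N\'{e}ron type $\mathrm{II}$ with $c_{3}=1$), and the infinitude rests on the citation of \cite{Erdos1953} for the squarefreeness of $27k^{3}-1$. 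You instead take $3\nmid a$ with $a^{3}-27$ squarefree, so every prime is handled by the first line of \eqref{C3tamacond} and your curves are in fact semistable with squarefree conductor --- a genuinely different (disjoint) family from the paper's, since yours requires $3\nmid a$ and the paper's requires $9\mid a$. Your route also makes the arithmetic input more elementary: because $a^{3}-27=(a-3)(a^{2}+3a+9)$ splits into coprime linear and quadratic factors when $3\nmid a$, the sieve only needs the classical quadratic squarefree-values result rather than the cubic case, and your observation that $a^{2}+3a+9$ has no prime factors $\equiv 2\pmod 3$ is correct (note that the paper's polynomial $27k^{3}-1=(3k-1)(9k^{2}+3k+1)$ factors the same way, so its appeal to the cubic theorem is likewise reducible). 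Two minor points: the non-isomorphy of your curves follows immediately from the fact that the minimal discriminants $a^{3}-27$ are pairwise distinct, with no need to invoke finiteness of isomorphism classes per discriminant; and the simultaneous-squarefreeness step, while standard, is the one place where your proof, like the paper's, ultimately leans on an external sieve result rather than being self-contained.
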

\begin{proof}
By \cite{Erdos1953}, the set $S=\left\{ k\in \mathbb{Z}\mid 27k^{3}-1\text{ is squarefree}\right\} $. Then, for $k\in S$, it is the case that $v_{p}\!\left( 27\left( 27k^{3}-1\right) \right) \leq 1$ for each prime 
$p\neq 3$ and $v_{3}\!\left( 27\left( 27k^{3}-1\right) \right) =3$.
Moreover, $1-9k\not\equiv -2\ \operatorname{mod}9$. It follows by Theorem \ref{GlobalTamaat3} that $E_{C_{3}}\!\left( 729k^{3},1\right) $ has
global Tamagawa number equal to $1$ since $729k^{3}-27=27\left(
27k^{3}-1\right) $. In fact, each $k \in S$ determines a unique elliptic curve $E_{C_{3}}\!\left( 729k^{3},1\right)$ with minimal discriminant $27\left( 27k^{3}-1\right) $.
\end{proof}

Lastly, we explicitly find all rational elliptic curves with a $2$-torsion point which have $c=1$. Consequently, we give an infinite family of rational elliptic curves with a $2$-torsion and $c=1$. 

\begin{theorem}
\label{GlobalTamaat2}Let $E$ be a rational elliptic curve with a $2$-torsion
point. Then the global Tamagawa number of $E$ is $c=1$ if and only if $E$ is
$\mathbb{Q}$-isomorphic to $E_{C_{2}}(a,b,d)$ where $a,b,d$ are integers such
that $d$ and $\gcd\!\left(  a,b\right)  $ are squarefree, and satisfy one of the following conditions at each prime $p$:
\end{theorem}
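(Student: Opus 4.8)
The plan is to mirror the proof of Theorem~\ref{GlobalTamaat3}: since the global Tamagawa number is the product $c=\prod_p c_p$, the condition $c=1$ is equivalent to $c_p=1$ at every prime $p$, and I would verify this prime by prime using the local data of Theorems~\ref{ThmC2at2} and~\ref{ThmforC2podd}. First I would invoke Proposition~\ref{rationalmodels} to write $E$ as $E_{C_2}(a,b,d)$ with $d$ and $\gcd(a,b)$ squarefree, the value $d=1$ recording the full $2$-torsion curves and $d\neq 1$ the remaining ones, so that a single family suffices. Throughout I would use the minimal invariants
\[
c_4 = u^{-4}\,16\,(3b^2d+a^2),\qquad c_6=-u^{-6}\,64\,(9b^2d-a^2),\qquad \Delta = u^{-12}\,64\,b^2 d\,(b^2 d-a^2)^2,
\]
with $u\in\{1,2\}$ as in the proof of Theorem~\ref{ThmC2at2}, since the prime support of $\Delta$ controls every case.

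At an odd prime $p$ I would separate the additive and semistable cases. By Theorem~\ref{TisC2Addred2Thm7}, $E_{C_2}$ has additive reduction at odd $p$ exactly when $p\mid\gcd(a,bd)$, and every row of Table~\ref{TableforC2podd} has $c_p\geq 2$; hence $c=1$ forces semistability at all odd primes, i.e.\ the odd part of $\gcd(a,bd)$ is trivial. At an odd prime of multiplicative reduction the Kodaira type is $\mathrm{I}_n$ with $n=v_p(\Delta)=v_p(b^2d)+2v_p(b^2d-a^2)$, and Tate's Algorithm gives $c_p=n$ if the reduction is split and $c_p=\gcd(2,n)$ if it is non-split. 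Thus $c_p=1$ holds precisely when the reduction is split with $n=1$, or non-split with $n$ odd; I would make the dichotomy explicit through the fact that, for $p$ odd, the reduction is split iff $-c_6$ is a square in $\mathbb{Q}_p^\times$, i.e.\ iff $9b^2d-a^2$ is a square modulo $p$. Tracking which of the factors $b^2d$ and $b^2d-a^2$ contributes $p$, the parity of the resulting $n$, and this Legendre-symbol condition then yields exactly the odd-prime conditions on $a,b,d$ in the statement.

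The prime $p=2$ is the core of the argument. When $E_{C_2}$ has additive reduction at $2$ I would scan Table~\ref{TableforC2} and keep only the rows with $c_2=1$, namely the types $\mathrm{II}$ and $\mathrm{II}^\ast$ unconditionally, together with the sub-cases of $\mathrm{IV}$, $\mathrm{I}_0^\ast$ and $\mathrm{IV}^\ast$ whose congruences force a single $\mathbb{F}_2$-rational component; each such row translates into a congruence condition on $a,b,d$. When $E_{C_2}$ is semistable at $2$ I would again use $c_2=n$ (split) or $c_2=\gcd(2,n)$ (non-split) with $n=v_2(\Delta)$, determining the split/non-split behaviour from Tate's Algorithm applied to the minimal model given by Lemma~\ref{Lemma for minimal disc}. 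Assembling the surviving conditions at $2$ with the odd-prime conditions produces the claimed list, and conversely any triple satisfying all of them has $c_p=1$ at every $p$.

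The main obstacle will be the bookkeeping at $p=2$: Table~\ref{TableforC2} is long, the thresholds separating $c_2=1$ from $c_2\in\{2,3,4\}$ are congruences modulo $8$, $16$, $\ldots$, and they must be intersected consistently with the split/non-split data at both $2$ and the odd multiplicative primes. The genuinely delicate point throughout is this split/non-split dichotomy for type $\mathrm{I}_n$, because it is exactly what distinguishes $c_p=1$ from $c_p=2$ when $v_p(\Delta)$ is odd; its evaluation reduces to deciding whether $9b^2d-a^2$ is a square, and organizing these quadratic-residue conditions uniformly across all primes is where the proof requires the most care.
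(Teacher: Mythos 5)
Your overall strategy --- reduce to the family $E_{C_2}$, then impose $c_p=1$ prime by prime using the additive-reduction tables and the split/non-split dichotomy at multiplicative primes --- is the same as the paper's, and most of the bookkeeping you describe is what the paper actually carries out. But there is a genuine gap at your very first step. Proposition~\ref{rationalmodels} does \emph{not} let you write a curve with full $2$-torsion as $E_{C_2}(a,b,1)$: part $(2)$ of that proposition covers only the case $C_2\times C_2\not\hookrightarrow E(\mathbb{Q})$ and requires $d\neq 1$, while full $2$-torsion curves are parameterized by the separate family $E_{C_2\times C_2}(a,b,d)$, whose model and local data (Theorem~\ref{ThmC2xC2}) are different; the tables you propose to scan (Theorems~\ref{ThmC2at2} and~\ref{ThmforC2podd}) are likewise established for $E_{C_2}$ with $d\neq 1$. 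So the full $2$-torsion case is simply not handled by your argument. The paper disposes of it separately: if $E\cong E_{C_2\times C_2}(a,b,d)$ had $c=1$, then since every row of Table~\ref{Table for C2xC2} has $c_p\geq 2$, the curve would have to be semistable; by \cite[Theorem 7.1]{Barrios2020} this forces $d=1$, $v_2(a)\geq 4$, $b\equiv 1\ \operatorname{mod}4$, and then an odd prime dividing $a-b$ contributes an even $v_p(\Delta)\geq 2$, hence $c_p\geq 2$, a contradiction. You need this (or some replacement) before restricting attention to $E_{C_2}$.

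A second, smaller problem: your split/non-split criterion at odd multiplicative primes is right in principle ($E$ is split iff $-c_6$ is a square in $\mathbb{Q}_p^{\times}$), but the conclusion ``iff $9b^2d-a^2$ is a square mod $p$'' is not, because the displayed formula for $c_6$ drops a factor of $a$: for $E_{C_2}$ one has $c_6=-64\,a\,(9b^2d-a^2)$, consistent with $v_p(c_6)=v_p(a)+v_p(9b^2d-a^2)$ as used in the proof of Theorem~\ref{ThmforC2podd}. At a prime with $p\mid bd$ and $p\nmid a$ your condition degenerates to $\left(\frac{-1}{p}\right)$, whereas the correct condition --- which the paper reads off from the tangent cone of the reduced curve $y^2=x(x+a)^2$ --- is $\left(\frac{-a}{p}\right)=-1$. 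This discrepancy would change the odd-prime entries of the final table. The remainder of your plan (the $c_2=1$ scan of Table~\ref{TableforC2}, the semistable analysis at $2$, and the fact that $v_p(b^2d-a^2)=0$ at all odd primes forces $b^2d-a^2=\pm 2^{n}$) matches the paper's proof.
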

{\begingroup
	\begin{longtable}{cc}
		\hline
		$p$ & \text{Conditions on } $a,b,d$\\
		\hline
		
		\endfirsthead
		
		\hline
		\toprule $p$ & \text{Conditions on } $a,b,d$\\
		\hline
		\endhead
		\hline
		
		\multicolumn{2}{r}{\emph{continued on next page}}
		\endfoot
		\hline
		\caption{Conditions for $E_{C_2}(a,b,d)$ to have global Tamagawa number equal to $1$.} \label{globaltamagawaC2}
		\endlastfoot
		$\neq2$ & $v_{p}(bd)=0,\ v_{p}\!\left(  b^{2}d-a^{2}\right)=0$\\\cmidrule{2-2}
		& $v_{p}(b)=0,\ v_{p}(d)=1,\ v_{p}\!\left(  b^{2}d-a^{2}\right)=0$\\\cmidrule{2-2}
		& $v_{p}(b)\geq1,\ v_{p}(d)=1,v_{p}(a)=0,\text{ and }\left(  \frac{-a}{p}\right)=-1$\\\midrule
		$2$ & $v_{2}(b)=3,\ v_{2}(d)=0,\ a\equiv31\ \operatorname{mod}64,\ b^{2}d-a^{2}=-1$\\\cmidrule{2-2}
		& $v_{2}(b)=3,\ v_{2}(d)=1,\text{\ }a\equiv63\ \operatorname{mod}64,\ b^{2}d-a^{2}=-1$\\\cmidrule{2-2}
		& $v_{2}(ab)=0,\ v_{2}\!\left(  d\right)  =1,\ b^{2}d-a^{2}=1$\\\cmidrule{2-2}
		& $v_{2}(a)>0,\ d\equiv3\ \operatorname{mod}4,\ b^{2}d-a^{2}=-1$\\\cmidrule{2-2}
		& $v_{2}(b)=v_{2}\!\left(  d\right)  =1,\ a\equiv5\ \operatorname{mod}8, b^{2}d-a^{2}=-1$\\\cmidrule{2-2}
		&$ v_{2}(b)=2,\ v_{2}(d)=1,\ a\equiv15\ \operatorname{mod}32,\ b^{2}d-a^{2}=-1$\\\cmidrule{2-2}
		& $v_{2}\!\left(  a\right)  =v_{2}\!\left(  b\right)  =1,\ b^{2}d-a^{2}-8a\equiv32\ \operatorname{mod}64,\ b^{2}d-a^{2}=\pm16$\\\cmidrule{2-2}
		& $v_{2}\!\left(  b\right)  =1,\ a\equiv10\ \operatorname{mod}16,\ b^{2}d-a^{2}=16$\\\cmidrule{2-2}
		& $v_{2}\!\left(  a\right)  =v_{2}\!\left(  b\right)  =1,\text{ }a\equiv2\ \operatorname{mod}8,\ b^{2}d-a^{2}=256$
	\end{longtable}
	\endgroup}

\begin{proof}
Suppose $E$ is a rational elliptic curve with a $2$-torsion point such that
its global Tamagawa number is $1$. Next, let $\Delta$ denote the minimal
discriminant of $E$. By Proposition~\ref{rationalmodels}, $E$ is $\mathbb{Q}$-isomorphic to either $E_{C_{2}}\!\left(  a,b,d\right)  $ or $E_{C_{2}\times
C_{2}}\!\left(  a,b,d\right)  $ for some integers $a,b,d$. Towards a
contradiction, suppose that $E$ is $\mathbb{Q}$-isomorphic to $E_{C_{2}\times C_{2}}\!\left(  a,b,d\right)  $. Note that $a$
and $b$ are relatively prime with $a$ even by Proposition~\ref{rationalmodels}%
. By Theorem~\ref{ThmC2xC2}, if $E$ has additive reduction at $p$, then
$c_{p}>1$ which is a contradiction. In particular, $E$ is semistable and by \cite[Theorem 7.1]{Barrios2020}, this occurs if and only if $d=1$, $v_{2}(a)\geq4$, and $b\equiv
1\ \operatorname{mod}4$. Then there
exists an odd prime $p|\left(  a-b\right)  $. Since $v_{p}\!\left(
\Delta\right)  =2v_{p}\!\left(  a-b\right)  $,
 $c_{p}\geq2$ is even by Tate's Algorithm, which is a contradiction.

It follows that $E$ is $\mathbb{Q}$-isomorphic to $E_{C_{2}}\!\left(  a,b,d\right)  $ with $\gcd\!\left(
a,b\right)  $ and $d$ squarefree. We claim that $v_{p}\!\left(  b^{2}%
d-a^{2}\right)  =0$ for each odd prime $p$. To this end, suppose
$v_{p}\!\left(  b^{2}d-a^{2}\right)  >0$ for some odd prime $p$. Then
$v_{p}\!\left(  \Delta\right)  >0$ since $v_{p}\!\left(  \Delta\right)
=v_{p}\!\left(  b^{2}d\right)  +2v_{p}\!\left(  b^{2}d-a^{2}\right)  $. If $p$
divides $b^{2}d$, then $p$ divides $a$ and thus $E$ has additive reduction at
$p$ with $c_{p}>1$ by Theorem~\ref{ThmforC2podd}. If $p$ does not divide
$b^{2}d$, then $v_{p}\!\left(  \Delta\right)  =2v_{p}\!\left(  b^{2}%
d-a^{2}\right)  $ and by Tate's Algorithm we have that $c_{p}\geq2$ is even,
which is a contradiction.

By the above, $E$ is semistable at all odd primes $p$ and thus $v_{p}\!\left(  \Delta\right)  =v_{p}\!\left(  b^{2}d\right) $. Moreover, if $E$ has good reduction at $p$, then $v_{p}\!\left(
bd\right)  =0$ and hence $c_{p}=1$. Now suppose $v_{p}\!\left(  \Delta\right)
>0$ so that $v_{p}\!\left(  b^{2}d\right)  >0$. If $v_{p}\!\left(  d\right)
=0$ and $v_{p}\!\left(  b\right)  \geq1$, then $v_{p}\!\left(  \Delta\right)
$ is even and by Tate's Algorithm we have that $c_{p}\geq2$ which is a contradiction. If $v_{p}\!\left(  b\right)  =0$ and $v_{p}\!\left(
d\right)  =1$, then $c_{p}=1$ by Tate's Algorithm. Next, suppose
$v_{p}\!\left(  b\right)  \geq1$ and $v_{p}\!\left(  d\right)  =1$. Since
$v_{p}\!\left(  \Delta\right)  \geq3$ is odd, then we have by Tate's Algorithm that
$c_{p}=1$ if and only if $E$ has non-split multiplicative reduction at $p$. To
this end, let $E_{1}$ be the elliptic curve attained from $E_{C_{2}}\!\left(
a,b,d\right)  $ via the admissible change of variables $x\longmapsto
x+b^{2}-a$ and $y\longmapsto y+\left(  bd-a\right)  x+b$. Then the Weierstrass
coefficients $a_{3},a_{4},a_{6}$ of $E_{1}$ are divisible by $p$. By Tate's
Algorithm, it follows that $E$ has non-split multiplicative reduction at $p$
if and only if $t^{2}+a_{1}t-a_{2}\equiv t^{2}-2at+a^{2}+a\ \operatorname{mod}%
p$ is irreducible over $\mathbb{F}_{p}$. This is equivalent to $\left(
\frac{-4a}{p}\right)  =\left(  \frac{-a}{p}\right)  =-1$.

We divide this part of the proof in various cases. Note that $b^{2}d-a^{2}=\pm2^{n}$ for $n$ a nonnegative integer, by the above.

\textbf{Case 1.} Suppose $E$ is semistable at $2$. Then,  by \cite[Theorems~4.4~and~7.1]{Barrios2020}, we have $\left(  i\right)  $
$v_{2}\!\left(  b^{2}d-a^{2}\right)  \geq8$, $v_{2}\!\left(  a\right)
=v_{2}\!\left(  b\right)  =1$, and $a\equiv2\ \operatorname{mod}8$ or $\left(
ii\right)  $ $v_{2}\!\left(  b\right)  \geq3$ with $a\equiv
3\ \operatorname{mod}4$.

\qquad\textbf{Subcase 1a.} Suppose $v_{2}\!\left(  b^{2}d-a^{2}\right)  \geq
8$, $v_{2}\!\left(  a\right)  =v_{2}\!\left(  b\right)  =1$, and
$a\equiv2\ \operatorname{mod}8$. By \cite[Theorem~4.4]{Barrios2020}, $v_{2}\!\left(  \Delta\right)
=2v_{2}\!\left(  b^{2}d-a^{2}\right)  -16$. If $v_{2}\!\left(  b^{2}d-a^{2}\right)  >8$, then $E$
has multiplicative reduction at $2$ and $v_{2}\!\left(  \Delta\right)
\geq2$ is even. Then, by Tate's Algorithm $c_{2}\geq2$ is even, which
is a contradiction. If $v_{2}\!\left(  b^{2}d-a^{2}\right)  =8$, then $E$ has
good reduction at $2$ and $c_{2}=1$.

\qquad\textbf{Subcase 1b}. Suppose $v_{2}\!\left(  b\right)  \geq3$ with
$a\equiv3\ \operatorname{mod}4$. Then $b^{2}d-a^{2}\equiv-1\ \operatorname{mod}4$ which
implies that $b^{2}d-a^{2}=-1$. By \cite[Theorem~4.4]{Barrios2020},
$v_{2}\!\left(  \Delta\right)  =v_{2}\!\left(  b^{2}d\right)  -6$. Note that if $v_{2}\!\left(  b\right)  >3$ and
$v_{2}\!\left(  d\right)  =0$, then $c_{2}\geq2$ is even by Tate's Algorithm, which is a contradiction.
So suppose $v_{2}\!\left(  b\right)  >0$ with $v_{2}\!\left(  d\right)  =1$.
By Tate's Algorithm, $c_{2}=1$ if and only if $E$ has non-split
multiplicative reduction at $2$. Next, let $E_{2}$ be the elliptic curve attained
from $E_{C_{2}}\!\left(  a,b,d\right)  $ via the admissible change of
variables $x\longmapsto4x+b^{2}d-a$ and $y\longmapsto8y+4\left(  bd-a\right)
x+bd$. Then the Weierstrass coefficients $a_{3},a_{4},a_{6}$ of $E_{2}$ are even
and by Tate's Algorithm, $E$ has non-split multiplicative reduction at $2$ if
and only if $t^{2}+a_{1}t-a_{2}\equiv t^{2}+t+\frac{a^{2}+a}{4}%
\ \operatorname{mod}2$ is irreducible. Note that $b^{2}d=a^{2}-1%
\equiv0\ \operatorname{mod}16$ implies that $a\equiv7\ \operatorname{mod}8$. Hence $a^{2}+a\equiv0\ \operatorname{mod}8$. In particular, $t^{2}%
+t+\frac{a^{2}+a}{4}\ \operatorname{mod}2$ splits over $\mathbb{F}_{2}$ which
contradicts our assumptions. It remains to show the case
when $v_{2}\!\left(  b\right)  =3$. If $v_{2}\!\left(  b\right)  =3$ with
$v_{2}\!\left(  d\right)  =0$, then $E$ has good reduction at $2$ and thus
$c_{2}=1$. Now observe that this holds if and only if $b^{2}d=a^{2}-1\equiv
64\ \operatorname{mod}128$. This is equivalent to $a\equiv31\ \operatorname{mod}64$ since $a\equiv
3\ \operatorname{mod}4$. Lastly, if
$v_{2}\!\left(  b\right)  =3$ with $v_{2}\!\left(  d\right)  =1$, then
$v_{2}\!\left(  \Delta\right)  =1$ and thus $c_{2}=1$. Moreover, this holds if
and only if $b^{2}d=a^{2}-1\equiv128\ \operatorname{mod}256$ which is equivalent to
$a\equiv63\ \operatorname{mod}64$ since $a\equiv3\ \operatorname{mod}4$.

\textbf{Case 2.} Suppose $E$ has additive reduction at $2$. Then $a,b,$ and
$d$ satisfy one of the conditions appearing in Theorem~\ref{ThmC2at2}. It therefore suffices to consider
the cases corresponding to $c_{2}=1$ in Table~\ref{TableforC2}.

\qquad\textbf{Subcase 2a.} Suppose $v_{2}\!\left(  b^{2}d-a^{2}\right)  >0$. If
$v_{2}\!\left(  b^{2}d-a^{2}\right)  =4$, $v_{2}\!\left(  a\right)
=v_{2}\!\left(  b\right)  =1,$ and $b^{2}d-a^{2}-8a\equiv
32\ \operatorname{mod}64$, then $b^{2}d-a^{2}=\pm16$. If $b^{2}d-a^{2}%
\equiv16\ \operatorname{mod}64$,\ $a\equiv2\ \operatorname{mod}8,$ and
$b^{2}d-a^{2}\not \equiv 8a\ \operatorname{mod}128$, then $b^{2}d-a^{2}=16$. Moreover, $8a\not \equiv
16\ \operatorname{mod}128$ if and only if $a\equiv10\ \operatorname{mod}16$.
This concludes the proof for the cases when $c_{2}=1$ with $v_{2}\!\left(
b^{2}d-a^{2}\right)  >0$ by Theorem~\ref{ThmC2at2}.

\qquad\textbf{Subcase 2b.} Suppose $v_{2}\!\left(  b\right)  =v_{2}\!\left(
b^{2}d-a^{2}\right)  =0$. Then $b^{2}d-a^{2}=\pm1$. If $v_{2}\!\left(
a\right)  =0$ and $v_{2}\!\left(  d\right)  =1$, then $b^{2}d-a^{2}%
\equiv1\ \operatorname{mod}4$ and thus $b^{2}d-a^{2}=1$. If $a$ is even and
$d\equiv3\ \operatorname{mod}4$, then $b^{2}d-a^{2}\equiv3\ \operatorname{mod}%
4$ and thus $b^{2}d-a^{2}=-1$. This concludes the proof for the cases when
$c_{2}=1$ with $v_{2}\!\left(  b\right)  =v_{2}\!\left(  b^{2}d-a^{2}\right)
=0$ by Theorem~\ref{ThmC2at2}.

\qquad\textbf{Subcase 2c.} Suppose $v_{2}\!\left(  b^{2}d-a^{2}\right)  =0$
with $b$ even. Then $a$ is odd and $b^{2}d-a^{2}\equiv-1\ \operatorname{mod}4$. Hence $b^{2}d-a^{2}=-1$. If $v_{2}\!\left(  b\right)  =2,\ v_{2}\!\left(
d\right)  =1,$ and $a\equiv3\ \operatorname{mod}4$, then $b^{2}d\equiv
32\ \operatorname{mod}64$ and thus $a\equiv15\ \operatorname{mod}32$ since
$a^{2}-1\equiv32\ \operatorname{mod}64$. If $v_{2}\!\left(  b\right)
=1,\ d\equiv3\ \operatorname{mod}4$, and either $a\equiv1\ \operatorname{mod}%
4$ or $a\equiv3\ \operatorname{mod}4$ with $ad\equiv5\ \operatorname{mod}8$,
then $b^{2}d-a^{2}\equiv3\ \operatorname{mod}8$ and thus $b^{2}d-a^{2}\neq-1$
which is a contradiction. If $v_{2}\!\left(  b\right)  =1,\ a\equiv
1\ \operatorname{mod}4,$ and $d\equiv2\ \operatorname{mod}4$, then
$a\equiv5\ \operatorname{mod}8$ since $b^{2}d\equiv8\ \operatorname{mod}16$
and $b^{2}d-a^{2}\equiv-1\ \operatorname{mod}16$. This concludes the proof for when $v_{2}\!\left(  b^{2}d-a^{2}\right)  =0$
with $b$ even, and thus we have the forward direction of the theorem.

For the converse, observe that if $E_{C_{2}}(a,b,d)$ satisfies one of the conditions in Table~\ref{globaltamagawaC2} for each prime $p\mid \Delta$, then the previous arguments imply that $c=1$.
\end{proof}

\begin{corollary}
There are infinitely many non-isomorphic rational elliptic curves with a
$2$-torsion point and global Tamagawa number equal to $1$.
\end{corollary}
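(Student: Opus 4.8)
The plan is to realize the classification of Theorem~\ref{GlobalTamaat2} by an explicit one-parameter family, namely the curves
\[
E_{k}: y^{2}=x^{3}+\left(4k+2\right)x^{2}-x,\qquad k\in\Z_{\geq0},
\]
anticipated in the introduction. First I would identify each $E_{k}$ as a member of $E_{C_{2}}$: setting $a=2k+1$, $b=1$, and $d=a^{2}+1$, the model in Table~\ref{ta:ETmodel} gives $E_{C_{2}}\!\left(a,1,a^{2}+1\right):y^{2}=x^{3}+2ax^{2}+\left(a^{2}-\left(a^{2}+1\right)\right)x=x^{3}+\left(4k+2\right)x^{2}-x$, so that $E_{k}=E_{C_{2}}\!\left(2k+1,1,\left(2k+1\right)^{2}+1\right)$.

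Next I would restrict to those $k$ for which $\left(2k+1\right)^{2}+1$ is squarefree, so that the triple $\left(a,b,d\right)=\left(2k+1,1,a^{2}+1\right)$ is admissible for $E_{C_{2}}$ in the sense of Proposition~\ref{rationalmodels} (here $\gcd\!\left(a,b\right)=1$, and $d$ is squarefree with $d\neq1$). For such $k$ the triple satisfies exactly one row of Table~\ref{globaltamagawaC2} at every prime dividing the minimal discriminant. Indeed, since $a$ is odd we have $a^{2}+1\equiv2\ \operatorname{mod}8$, so at $p=2$ one checks $v_{2}\!\left(ab\right)=0$, $v_{2}\!\left(d\right)=1$, and $b^{2}d-a^{2}=1$, which is precisely the third $p=2$ row; at each odd prime $p\mid d$ one has $v_{p}\!\left(b\right)=0$, $v_{p}\!\left(d\right)=1$, and $v_{p}\!\left(b^{2}d-a^{2}\right)=v_{p}\!\left(1\right)=0$, which is the second row for $p\neq2$; and no other prime divides the discriminant. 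By Theorem~\ref{GlobalTamaat2} each such $E_{k}$ therefore has global Tamagawa number $c=1$.

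I would then supply the infinitude of admissible $k$. Because the polynomial $n^{2}+1$ is irreducible and has no fixed square divisor, it assumes squarefree values infinitely often as $n$ ranges over the odd integers; this is the same circle of ideas invoked in Corollary~\ref{CorC3c1} and may be cited from \cite{Erdos1953}. This yields infinitely many values of $k$ for which the construction applies. Finally, to pass from infinitely many curves to infinitely many \emph{non-isomorphic} curves, I would compute the minimal discriminant: by Lemma~\ref{Lemma for minimal disc} the model $E_{C_{2}}\!\left(a,1,a^{2}+1\right)$ is already a global minimal model (as $v_{2}\!\left(b\right)=0$), and the discriminant formula recorded in the proof of Theorem~\ref{ThmC2at2} gives $\Delta=64\,b^{2}d\left(b^{2}d-a^{2}\right)^{2}=64\left(\left(2k+1\right)^{2}+1\right)$. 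Since the minimal discriminant is an isomorphism invariant and this quantity is strictly increasing in $k$, distinct admissible values of $k$ produce pairwise non-isomorphic elliptic curves, completing the proof. The one genuinely external input—and hence the main obstacle—is the analytic statement that $\left(2k+1\right)^{2}+1$ is squarefree for infinitely many $k$; once that is granted, the remainder is bookkeeping against Table~\ref{globaltamagawaC2} and the elementary discriminant computation.
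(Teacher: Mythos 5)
Your proposal is correct and takes essentially the same route as the paper: the paper also constructs the family $E_{C_{2}}\!\left(2k+1,1,(2k+1)^{2}+1\right):y^{2}=x^{3}+(4k+2)x^{2}-x$, restricts to $k$ with $(2k+1)^{2}+1=2\left(2k^{2}+2k+1\right)$ squarefree, applies Theorem~\ref{GlobalTamaat2}, and distinguishes the curves by the minimal discriminant $64d$. The only (cosmetic) difference is the reference for the analytic input: the paper cites \cite{Nagel1922} for the infinitude of squarefree values of the quadratic $2k^{2}+2k+1$, whereas you cite \cite{Erdos1953}, whose theorem is really aimed at cubics; the quadratic case you need is the older Nagel--Estermann result.
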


\begin{proof}
By \cite{Nagel1922}, the set $S=\left\{  2\left(  2k^{2}+2k+1\right)  \mid
k\in
\mathbb{Z},\ 2k^{2}+2k+1\text{ is squarefree}\right\}  $ is infinite. Now let $d\in S$ so that there is an integer $k$ such that $d=\left(  2k+1\right)  ^{2}+1$. In particular, the elliptic
curve
\[
E_{C_{2}}\!\left(  2k+1,1,d\right)  :y^{2}=x^{3}+\left(  4k+2\right)  x^{2}-x
\]
has minimal discriminant $\Delta=64d$ and by Theorem~\ref{GlobalTamaat2} we
have that its global Tamagawa number is $1$. Next let $d^{\prime}\in S$ with
$d^{\prime}=\left(  2k^{\prime}+1\right)  ^{2}+1$ for some integer $k^{\prime}$. Then the elliptic curves $E_{C_{2}}\!\left(  2k+1,1,d\right)  $ and
$E_{C_{2}}\!\left(  2k^{\prime}+1,1,d\right)  $ are $\mathbb{Q}$-isomorphic if and only if $d=d^{\prime}$ which shows that there are
infinitely many non-isomorphic rational elliptic curves with a $2$-torsion
point and global Tamagawa number equal to $1$.
\end{proof}

\noindent \textbf{Acknowledgments.} We would like to thank Imin Chen and Alyson Deines for their helpful comments as we prepared this manuscript. We would also like to thank the referee for his/her detailed comments and suggestions.

\newpage
\begin{appendix}
\section{}
\label{AppendixTables}

{\setlength{\tabcolsep}{12pt}
\renewcommand{\arraystretch}{0.95} 
\begin{longtable}{ccccc}
	\caption[Admissible]{Maps from $E_{C_{2}}$ to $F_{C_{2},i}$;  $x\longmapsto u_{i}^{2}x+r_{i}$ and
$y\longmapsto u_{i}^{3}y+u_{i}^{2}s_{i}x+w_{i}$.}\\
	\midrule
	$i$ & $u_{i}$ & $r_{i}$ & $s_{i}$ & $w_{i}$\\
	\midrule
	\endfirsthead
	\caption[]{\emph{continued}}\\
	\midrule
	$i$ & $u_{i}$ & $r_{i}$ & $s_{i}$ & $w_{i}$\\
	\midrule
	\endhead
	\midrule
	\multicolumn{2}{r}{\emph{continued on next page}}
	\endfoot
	\midrule
	\endlastfoot
$1$ & $2$ & $4$ & $2$ & $8$\\ \cmidrule(lr){1-5}
$2$ & $1$ & $b$ & $b$ & $a$\\ \cmidrule(lr){1-5}
$3$ & $2$ & $8$ & $2$ & $16$\\ \cmidrule(lr){1-5}
$4$ & $1$ & $-a$ & $1$ & $2d$\\ \cmidrule(lr){1-5}
$5$ & $1$ & $16\left(  b^{2}d-a^{2}\right)  $ & $2$ & $8\left(  b^{2}%
d-a^{2}\right)  $\\ \cmidrule(lr){1-5}
$6$ & $2$ & $\frac{b^{2}d-a^{2}}{8}$ & $2$ & $\frac{b^{2}d-a^{2}}{4}$\\ \cmidrule(lr){1-5}
$7$ & $1$ & $d^{2}-a$ & $2d$ & $2d$\\ \cmidrule(lr){1-5}
$8$ & $1$ & $\frac{b^{2}}{2}$ & $bd$ & $b^{4}$\\ \cmidrule(lr){1-5}
$9$ & $1$ & $\frac{ab^{2}d}{2}-a$ & $-bd-a$ & $b$\\ \cmidrule(lr){1-5}
$10$ & $1$ & $\frac{3b^{2}}{2}$ & $b$ & $b^{3}+b^{2}$\\ \cmidrule(lr){1-5}
$11$ & $1$ & $\frac{b^{2}}{2}$ & $b$ & $b^{4}+b^{2}$\\ \cmidrule(lr){1-5}
$12$ & $1$ & $a+2$ & $3a^{2}+12a+12$ & $4$\\ 
$13$ & $1$ & $a$ & $2$ & $4b$\\ \cmidrule(lr){1-5}
$14$ & $1$ & $a$ & $2$ & $4b-16d+8$\\ \cmidrule(lr){1-5}
$15$ & $1$ & $-a$ & $4$ & $2b$\\ \cmidrule(lr){1-5}
$16$ & $1$ & $\frac{abd}{4}-a-b$ & $4$ & $abd-2b$\\ \cmidrule(lr){1-5}
$17$ & $1$ & $b^{2}-a$ & $bd-a$ & $b$\\ \cmidrule(lr){1-5}
$18$ & $1$ & $\frac{bda}{2}-a$ & $-bd-a$ & $b$\\ \cmidrule(lr){1-5}
$19$ & $1$ & $\frac{abd}{2}-a$ & $-bd-a$ & $b\left(  d-1\right)  $\\ \cmidrule(lr){1-5}
$20$ & $1$ & $abd-a$ & $-bd-a$ & $2b$\\ \cmidrule(lr){1-5}
$21$ & $1$ & $\frac{abd}{2}-a$ & $-bd-a$ & $2b$\\ \cmidrule(lr){1-5}
$22$ & $2$ & $\frac{\left(  b^{2}d-a^{2}\right)  ^{2}}{2048}$ & $2$ &
$\frac{3\left(  b^{2}d-a^{2}\right)  ^{2}}{1024}$ \\  \cmidrule(lr){1-5}
$23$ & $1$ & $9$ & 3 & 9\\  \cmidrule(lr){1-5}
$24$ & $1$ & $3b^{2}d+a^{2}$ & $\frac{3b^{2}d+a^{2}}{p}$ & $(3b^{2}d+a^{2})p$\\  \cmidrule(lr){1-5}
$25$ & $1$ & $-a$ & $a$ & $ab^{2}$\\  \cmidrule(lr){1-5}
$26$ & $1$ & $b^{2}d-a^{2}$ & $\frac{b^{2}d-a^{2}}{p}$ & $(b^{2}d-a^{2})p$
	
\label{ta:WeierTisC2}	
\end{longtable}}

{\setlength{\tabcolsep}{6pt}
\renewcommand{\arraystretch}{0.79} 
\begin{longtable}{ccccccc}
	\caption[Red]{Local data at $p=2$ given in terms of $v_{2}\!\left(c_{4}\right)$, $v_{2}\!\left(  c_{6}\right)$, and $v_{2}\!\left(  \Delta\right)$.
	}\\
	\toprule
	N\'{e}ron Type & $c_{2}$ &$v_{2}\!\left(c_{4}\right)$ & $v_{2}\!\left(  c_{6}\right)$ & $v_{2}\!\left(  \Delta\right)  $ & Max Step in Tate's Algorithm &
$f_{2}$\\
	\toprule
	\endfirsthead
	\caption[]{\emph{continued}}\\
	\hline
	N\'{e}ron Type & $c_{2}$ &$v_{2}\!\left(c_{4}\right)$ & $v_{2}\!\left(  c_{6}\right)$ & $v_{2}\!\left(  \Delta\right)  $ & Max Step in Tate's Algorithm &
$f_{2}$\\
	\hline
	\endhead
	\hline
	\multicolumn{2}{r}{\emph{continued on next page}}
	\endfoot
	\hline
	\endlastfoot
II &$1$ &$\geq4$ & $5$ & $4$ & $5$ & $4$\\ \cmidrule(lr){3-7}
& &$4$ & $\geq6$ & $6$ & $4$ & $6$\\ \cmidrule(lr){3-7}
& &$4$ & $6$ & $7$ &  & $7$\\ \cmidrule(lr){3-7}
& &$\geq5$ & $6$ & $6$ &  & $6$\\ \cmidrule(lr){1-7}
$\mathrm{III}$ & $2$&$4$ & $5$ & $4$ & $5$ & $3$\\ \cmidrule(lr){3-7}
& &$5$ & $5$ & $4$ &$4$ & $3$\\ \cmidrule(lr){3-7}
& &$4$ & $\geq6$ & $6$ & $4$ & $5$\\ \cmidrule(lr){3-7}
& &$5$ & $7$ & $8$ &  & $7$\\ \cmidrule(lr){3-7}
& &$5$ & $\geq8$ & $9$ &  & $8$\\ \cmidrule(lr){1-7}
IV &$1$ or $3$ &$4$ & $5$ & $4$ & $5$ & $2$\\ \cmidrule(lr){3-7}
& &$\geq6$ & $5$ & $4$ & $5$ & $2$\\ \cmidrule(lr){1-7}
$\mathrm{I}_{0}^{\ast}$ & $1$ or $2$ &$4$ & $6$ & $8$ & $8$& $4$\\\cmidrule(lr){3-7}
& &$\geq6$ & $7$ & $8$ & $8$ & $4$\\ \cmidrule(lr){3-7}
& &$4$ & $6$ & $9$ &  & $5$\\ \cmidrule(lr){3-7}
& &$\geq6$ & $8$ & $10$ &  & $6$\\ \cmidrule(lr){1-7}
$\mathrm{I}_{1}^{\ast}$ & $2$ or  $4$& $4$ & $6$ & $8$ &$8$ & $3$\\ \cmidrule(lr){3-7}
& &$6$ & $7$ & $8$ & $7$ & $3$\\ \cmidrule(lr){1-7}
$\mathrm{I}_{2}^{\ast}$ & $2$ or  $4$&$4$ & $6$ & $10$ & $9$ & $4$\\ \cmidrule(lr){3-7}
& &$6$ & $\geq9$ & $12$ & $7$ & $6$\\ \cmidrule(lr){3-7}
& &$6$ & $9$ & $13$ &  & $7$\\ \cmidrule(lr){1-7}
$\mathrm{I}_{3}^{\ast}$ &$2$ or  $4$ &$4$ & $6$ & $11$ & $10$ & $4$\\ \cmidrule(lr){3-7}
& &$6$ & $\geq9$ & $12$ & $7$ & $5$\\ \cmidrule(lr){1-7}
I$_{n(n\ge 4)}^{\ast}$ &$2$ or  $4$ &$4$ & $6$ & $8+n$ & & $4$\\ \cmidrule(lr){3-7}
& &$6$ & $9$ & $10+n$ &  & $6$\\ \cmidrule(lr){1-7}
$\mathrm{IV}^{\ast}$ & $1$ or $3$ &$4$ & $6$ & $8$ & $8$ & $2$\\ \cmidrule(lr){3-7}
& &$\geq7$ & $7$ & $8$ & $8$ & $2$\\ \cmidrule(lr){1-7}
$\mathrm{III}^{\ast}$ & $2$ &$4$ & $6$ & $10$ & $9$ & $3$\\ \cmidrule(lr){3-7}
& &$7$ & $9$ & $12$ &  & $5$\\ \cmidrule(lr){3-7}
& &$7$ & $10$ & $14$ &  & $7$\\ \cmidrule(lr){3-7}
& &$7$ & $\geq11$ & $15$ &  & $8$\\ \cmidrule(lr){1-7}
$\mathrm{II}^{\ast}$ &$1$ &$4$ & $6$ & $11$ & $10$ & $3$\\ \cmidrule(lr){3-7}
& &$\geq8$ & $9$ & $12$ & & $4$\\ \cmidrule(lr){3-7}
& &$\geq8$ & $10$ & $14$ & & $6$
\label{ta:PapTableIV}	
\end{longtable}}
{\renewcommand{\arraystretch}{0.93} 
\begin{longtable}{cccccccc}
	\caption[Red]{Local data at $p\geq3$ given in terms of $v_{p}\!\left(c_{4}\right)$, $v_{p}\!\left(  c_{6}\right)$, and $v_{p}\!\left(  \Delta\right)$.
	}\\
	\toprule
	N\'{e}ron Type & $c_{p}$ &$p$ & $v_{p}\!\left(  c_{4}\right)  $ & $v_{p}\!\left(
c_{6}\right)  $ & $v_{p}\!\left(  \Delta\right)  $ & Additional Condition &
$f_{p}$\\
	\toprule
	\endfirsthead
	\caption[]{\emph{continued}}\\
	\hline
	N\'{e}ron Type &$c_{p}$ & $p$ & $v_{p}\!\left(  c_{4}\right)  $ & $v_{p}\!\left(
c_{6}\right)  $ & $v_{p}\!\left(  \Delta\right)  $ & Additional Condition &
$f_{p}$\\
	\hline
	\endhead
	\hline
	\multicolumn{2}{r}{\emph{continued on next page}}
	\endfoot
	\hline
	\endlastfoot
II & $1$ & $3$ & $\geq2$ & $3$ & $3$ & $ \left(  \frac{c_{6}}{27}\right)
^{2}+2\not \equiv \frac{c_{4}}{3}\ \operatorname{mod}9$ & $3$\\  \cmidrule(lr){4-8}
& &  & $2$ & $4$ & $3$ &  & $3$\\  \cmidrule(lr){4-8}
& &  & $2$ & $3$ & $4$ &  & $4$\\  \cmidrule(lr){4-8}
& &  & $\geq3$ & $4$ & $5$ &  & $5$\\  \cmidrule(lr){3-8}
& & $\geq5$ & $\geq1$ & $1$ & $2$ &  & $2$\\  \cmidrule(lr){1-8}
$\mathrm{III}$ & $2$ & $3$ & $\geq2$ & $3$ & $3$ & $\left(  \frac{c_{6}}{27}\right)
^{2}+2\equiv\frac{c_{4}}{3}\ \operatorname{mod}9$ & $2$\\  \cmidrule(lr){4-8}
& &  & $2$ & $\geq5$ & $3$ &  & $2$\\  \cmidrule(lr){4-8}
& & $\geq5$ & $1$ & $\geq2$ & $3$ &  & $2$\\  \cmidrule(lr){1-8}
$\mathrm{IV}$ & $1$ or $3$ & $3$ & $2$ & $3$ & $5$ &  & $3$\\  \cmidrule(lr){4-8}
&  & & $3$ & $5$ & $6$ &  & $4$\\   \cmidrule(lr){4-8}
&  & & $\geq4$ & $5$ & $7$ &  & $5$\\  \cmidrule(lr){3-8}
& & $\geq5$ & $\geq2$ & $2$ & $4$ &  & $2$\\  \cmidrule(lr){1-8}
$\mathrm{I}_{0}^{\ast}$ & $1,2,$ & $3$ & $2$ & $3$ & $6$ &  & $2$\\  \cmidrule(lr){4-8}
& or $4$ &  & $3$ & $\geq6$ & $6$ &  & $2$\\ \cmidrule(lr){3-8}
& & $\geq5$ & $2$ & $\geq3$ & $6$ &  & $2$\\ \cmidrule(lr){4-8}
& &  & $\geq2$ & $3$ & $6$ &  & $2$\\  \cmidrule(lr){1-8}
I$_{n(n>0)}^{\ast}$ & $2$ or $4$ & $p\geq3$ & $2$ & $3$ & $6+n$ &  & $2$\\  \cmidrule(lr){1-8}
$\mathrm{IV}^{\ast}$ & $1$ or $3$ & $3$ & $\geq4$ & $6$ & $9$ & $\left(  \frac{c_{6}}{3^{6}}\right)
^{2}+2\not \equiv \frac{c_{4}}{27}\ \operatorname{mod}9$ & $3$\\ \cmidrule(lr){4-8}
&  & & $4$ & $7$ & $9$ &  & $3$\\ \cmidrule(lr){4-8}
&  & & $4$ & $6$ & $10$ &  & $4$\\ \cmidrule(lr){4-8}
&  & & $\geq5$ & $7$ & $11$ &  & $5$\\ \cmidrule(lr){3-8}
& & $\geq5$ & $\geq3$ & $4$ & $8$ &  & $2$\\  \cmidrule(lr){1-8}
$\mathrm{II}^{\ast}$ & $2$& $3$ & $\geq4$ & $6$ & $9$ & $\left(  \frac{c_{6}}{3^{6}%
}\right)  ^{2}+2\equiv\frac{c_{4}}{27}\ \operatorname{mod}9$ & $2$\\ \cmidrule(lr){4-8}
&  & & $4$ & $\geq8$ & $9$ &  & $2$\\ \cmidrule(lr){3-8}
& & $\geq5$ & $3$ & $\geq5$ & $9$ &  & $2$\\  \cmidrule(lr){1-8}
$\mathrm{II}^{\ast}$ & $1$ & $3$ & $4$ & $6$ & $11$ &  & $3$\\ \cmidrule(lr){4-8}
&  & & $5$ & $8$ & $12$ &  & $4$\\ \cmidrule(lr){4-8}
&  & & $\geq6$ & $8$ & $13$ &  & $5$\\ \cmidrule(lr){3-8}
& & $\geq5$ & $\geq4$ & $5$ & $10$ &  & $2$%
\label{ta:PapTableIandII}	
\end{longtable}}
\end{appendix}
\newpage
\bibliographystyle{amsalpha}
\bibliography{Local_Data}
\end{document}